\begin{document}
\renewcommand{\a}{\alpha}
\newcommand{\D}{\Delta}
\newcommand{\ddt}{\frac{d}{dt}}
\counterwithin{equation}{section}
\newcommand{\e}{\epsilon}
\newcommand{\eps}{\varepsilon}
\newtheorem{theorem}{Theorem}[section]
\newtheorem{proposition}{Proposition}[section]
\newtheorem{lemma}[theorem]{Lemma}
\newtheorem{remark}[theorem]{Remark}
\newtheorem{example}{Example}[section]
\newtheorem{definition}{Definition}[section]
\newtheorem{corollary}[theorem]{Corollary}
\makeatletter
\newcommand{\rmnum}[1]{\romannumeral #1}
\newcommand{\Rmnum}[1]{\expandafter\@slowromancap\romannumeral #1@}
\makeatother

\title{\bf Observability and unique continuation inequalities for the Schr\"{o}dinger equations with inverse-square potentials\footnote{This work was supported by  National Natural Science Foundation of China (61671009, 12171178).}}
\author{Hui Xu,\,\,  Longben Wei\footnote{Corresponding author.},~~Zhiwen Duan\\
{\small {\it  School of Mathematics and Statistics, Huazhong University of Science }}\\
{\small {\it and Technology, Wuhan, {\rm 430074,} P.R.China}}  \\
{\small {\it Email: d202180016@hust.edu.cn~(H.Xu)}}\\
{\small {\it Email: d202080006@hust.edu.cn~(L.Wei)}}\\
{\small {\it Email: duanzhw@hust.edu.cn~(Z.Duan)}}}
\date{}
\maketitle
\centerline{\large\bf Abstract }
This paper is inspired by Wang, Wang and Zhang's work [ Observability and unique continuation inequalities for the Schrödinger equation. J. Eur. Math. Soc. \textbf{21}, 3513–3572 (2019)], where they present several observability and unique continuation inequalities for the free Schr\"{o}dinger equation in $\mathbb{R}^{n}$.
We extend all such observability and unique continuation inequalities for the Schr\"{o}dinger equations on half-line with inverse-square potentials. Technically, the proofs essentially rely on the representation of the solution, a Nazarov type uncertainty principle for the Hankel transform and an interpolation inequality for functions whose Hankel transform have compact support.\\
\\
\noindent{\bf Key words:} Observability, unique continuation, controllability, inverse-square potentials\\
\par
\noindent {\bf AMS Subject Classifications 2020:}\  93B07; 35B60; 93B05.

\section{Introduction}
In this paper, we will present several observability and unique continuation inequalities (at either two points in time or one point in time) for the following Schr\"{o}dinger equations:
\begin{equation}\label{equation1.1}
\begin{cases}
i\partial_tu(t,x)=\left( -\partial_{x}^{2}+(\nu^{2}-\frac{1}{4})\frac{1}{x^2}\right) u(t,x),\,\,x\in \mathbb{R}^+,~t>0, \\
u(0,x)=u_0\in L^{2}(\mathbb{R}^{+}),
\end{cases}
\end{equation}
where the fixed constant $\nu\geq0$. (Here and in what follows, $L^{2}(\mathbb{R^{+}})=L^{2}(\mathbb{R}^{+};\mathbb{C})$. The same is said about $C_{0}^{\infty}(\mathbb{R}^{+})$.) The Schr\"{o}dinger equation \eqref{equation1.1} with inverse-square potential is of interest in quantum mechanics. The family of differential operators $-\partial_{x}^{2}+(\nu^{2}-\frac{1}{4})\frac{1}{x^2}$ is very special, they appear in numerous applications, e.g., as the radial part of the Laplacian in any dimension. Their eigenfunctions can be expressed in terms of Bessel-type functions, and they have a surprisingly long and intricate theory, see \cite{HUJR,CH,EG}. We also note that the heat flow associates to the inverse square potential has been studied in the theory of combustion (see \cite{JE} and references therein). The mathematical interest in these equations however comes mainly from the fact that the potential term is homogeneous of degree and therefore scales exactly the same as the Laplacian. This in particular implies that perturbation methods cannot
be used in studying the effect of this potential. Indeed, the decay is in some sense the borderline case for the existence of global-in-time estimates for Schr\"{o}dinger equation with a potential (see \cite{IW}). In particular, it is known that a negative potential $V$ decaying slower than inverse-square results in the spectrum of  being unbounded from below (\cite{RS2}, Section XIII, pp. 87–88).
\par Now to proceed our introduction, we need to recall some background about the observability inequality for the Schrödinger equation. The classical observability inequality for the Schrödinger equation:
\begin{equation}\label{equation1.2}
\begin{cases}
	i\partial_{t}u(t,x)=Hu(t,x),~(t,x)\in\mathbb{R}\times \mathbb{M},\\
	u(0,x)=u_0\in L^{2}(\mathbb{M}),
\end{cases}
\end{equation}
reads that when $u$ solves \eqref{equation1.2},
\begin{equation}\label{equation1.3}
\int_{\mathbb{M}}|u(0,x)|^{2}dx\leq C_{obs}\int_0^T\int_\Omega|u(t,x)|^2dxdt,
\end{equation}
where $T>0$, $\Omega$ is a subset of the manifold $\mathbb{M}$ and the constant $C_{obs}$ is called the observable constant or cost constant, $H$ denotes the self-adjoint extension of the Schr\"{o}dinger operator $-\Delta_g+V$ on $L^{2}(\mathbb{M})$ where $V$ is a potential. This is the observability inequality for the Schr\"{o}dinger equation to involve observation in a time interval. This type of the observability has been extensively studied in the literature  on compact Riemannian manifolds, we refer readers to \cite{A1, A2, BBZ, BZ, BZ1, BZ2, J} for results on some compact Riemannian manifolds. For the Schr\"{o}dinger equation on non-compact Riemannian manifolds, there are relatively few existing results, for which new difficulties arise due to the presence of infinity in space. But recently, there has been a growing interest in the question of observability for the Schr\"{o}dinger equation in the Euclidean space. For example, when $\mathbb{M}=\mathbb{R}^{n},\,V=0$, the inequality \eqref{equation1.3} holds if $\Omega=\{x\in \mathbb{R}^{n}:\,|x|\geq r\}$ in all dimensions $n\geq1$, and the article \cite{HWW} gives a sharp result that the inequality \eqref{equation1.3} holds if and only if $\Omega$ is thick (a more general set class) in dimension one. We refer readers to \cite{MT, KLJ, AP, JK} for more general such observability estimates in the Euclidean space.
\par Recently, Wang, Wang and Zhang \cite{WWZ} have proved the following new type of observability inequality: Given $x_1,\,x_2\in \mathbb{R}^{n}$, $r_1,\,r_2>0$ and $T>S\geq0$, there is a positive constant $C=C(n)$ such that for all $u(x,t)$ solving \eqref{equation1.2} with $V=0$, 
\begin{equation}\label{equation1.4}
\int_{\mathbb{R}^{n}}|u_0(x)|^{2}dx\leq Ce^{Cr_1r_2\frac{1}{T-S}}\left( \int_{B_{r_1}^{c}(x_1)}|u(x,S;u_0)|^{2}dx+\int_{B_{r_{2}}^{c}(x_2)}|u(x,T;u_0)|^{2}dx\right) .
\end{equation}
This improves the inequality \eqref{equation1.3} since only two time points appear on the right-hand side of \eqref{equation1.4}. For the same reason, \eqref{equation1.4} is called observability inequality at two time points. The proof of \eqref{equation1.4} in \cite{WWZ} is based on the following basic identity in the free case,
\begin{equation}\label{equation1.5}
(2it)^{\frac{n}{2}}e^{-i|x|^{2}/4t}u(t,x)=\widehat{e^{i|\cdot|^{2}/4t}u_0}(x/2t),\,\,\,for\,\,all\,\,t>0,~x\in \mathbb{R}^{n},
\end{equation}
where $\widehat{\cdotp}$ denotes the Fourier transform. With identity \eqref{equation1.5} in hand, they proved in \cite{WWZ} that the estimate \eqref{equation1.4} is equivalent to the following Nazarov's uncertainty principle built up in \cite{PJ} (see also \cite{VB, F}): If $A,B$ are subsets of $\mathbb{R}^{n}$ of finite measure, then
\begin{equation}\label{equation1.6}
\int_{\mathbb{R}^{n}}|f(x)|^{2}dx\leq C(n,A,B)\left( \int_{\mathbb{R}^{n}\setminus A}|f(x)|^{2}dx+\int_{\mathbb{R}^{n}\setminus B}|\hat{f}(\xi)|^{2}d\xi\right) ,\,\,f\in L^{2}(\mathbb{R}^{n}),
\end{equation}
with
\begin{equation*}
C(n,A,B)=Ce^{Cmin\{|A||B|,|B|^{1/n}\omega(A), |A|^{1/n}\omega(B)\}},
\end{equation*}
where $\omega(A)$ denotes the mean width of $A$, $C>0$ is an absolute constant. We need to point out that the identity \eqref{equation1.5} is a crucial tool in this article, based on this identity and other tools, they obtained more quantitative estimates besides \eqref{equation1.4}.
\par Three natural questions were raised in \cite{WWZ} that, can their results be extended to the following situations? (a) Schr\"{o}dinger equations with nonzero potentials. (b) Homogeneous Schr\"{o}dinger equations on a bounded domain $\Omega$. (c) Schr\"{o}dinger equations on half space $\mathbb{R}^{n}_{+}$ (where $\mathbb{R}^{n}_{+}:=\{(x_1,...,x_n)\in \mathbb{R}^{n}:x_n>0\}$). Recently, for question (a), Huang  and  Soffer \cite{HS} considered a class of decaying potentials $V$ (Don't include our situation) and established observability inequalities similar to \eqref{equation1.4} at two points in time for $H=-\Delta+V$ in $\mathbb{R}^{n}$, due to the generality of the potential and lack of similar identity \eqref{equation1.5}, the observability inequality similar to \eqref{equation1.4} established in \cite{HS} is restricted to the case $x_1=x_2=0$ and $r_1r_2\sim T$, their proof is based on an operator type Nazarov uncertainty
principle and minimal escape velocity estimates. For potentials that are increasing to infinity when $|x|\mapsto\infty$, to our best knowledge, there is only one result which was established for the Hermite Schrödinger equation in \cite{HWW}. Further observability inequalities at two time points can be found in \cite{LW,YM} for the linear KdV equation and in \cite{LWS} for nonlinear Schrödinger equation.
\par Now back to our model \eqref{equation1.1}, which can be considered as a positive answer to question (a) and (c) mentioned above in dimension one, we will first establish the following identity (see Lemma \ref{L1} below) similar to \eqref{equation1.5}
\begin{equation}\label{equation1.7}
(2t)^{\frac{1}{2}}e^{\frac{i(\nu+1)\pi}{2}}e^{-i|x|^{2}/4t}u(t,x)=F_\nu(e^{i|\cdot|^{2}/4t}u_0)(x/2t),\,\,\,for\,\,all\,\,t>0,\,and\,\,u(t,x)\,\,solves\,\,\eqref{equation1.1},
\end{equation}
where $F_\nu$ denotes the well known Hankel transform
\begin{equation}\label{equation1.8}
F_\nu(f)(x):=\int_{0}^{\infty}\sqrt{xy}J_\nu(xy)f(y)dy,\  x\in \mathbb{R}^{+},\,\,f\in L^{2}(\mathbb{R}^{+}).
\end{equation}
With identity \eqref{equation1.7} in hand, we can extend all the results in \cite{WWZ} to our equation \eqref{equation1.1}. And to our best knowledge, there is no such identity in the higher dimension, which restricts the discussion of this article to dimension one.
\par There are three main theorems in this paper. The first one gives an observability inequality at two points in time for the equation \eqref{equation1.1}.
\begin{theorem}\label{T1}
Let $A, B$ be two measurable sets in $\mathbb{R}^{+}$ with finite measure and $T>S\geq0$, then for every $\nu\geq0$, there is a positive constant $C=C(\nu,A,B,T-S)$, such that for all $u$ solving equation \eqref{equation1.1} with $u_0\in L^{2}(\mathbb{R}^{+})$, we have
\begin{equation}\label{T1.1}
\int_{\mathbb{R}^{+}}|u_0(x)|^{2}dx\leq C\left( \int_{A^{c}}|u(x,S;u_0)|^{2}dx+\int_{B^{c}}|u(x,T;u_0)|^{2}dx\right) .
\end{equation}
In particular,  in the following three cases, the constant $C(\nu,A,B,T-S)$ can be more explicit:
\begin{itemize}
\item[(i)] If $\nu=\frac{k}{2},\,k\in\{0,1,...\}$ , then there exists a constant C depending only on $\nu$ such that
\begin{equation}\label{T1.03}
\int_{\mathbb{R}^{+}}|u_0(x)|^{2}dx\leq Ce^{C\frac{\mu_{\nu}(A)\mu_{\nu}(B)}{(T-S)^{2(\nu+1)}}}\left( \int_{A^{c}}|u(x,S;u_0)|^{2}dx+\int_{B^{c}}|u(x,T;u_0)|^{2}dx\right) ,
\end{equation}
where $\mu_{\nu}(A)=\int_{A}x^{2\nu+1}dx$.
\item[(ii)] For general $\nu$, if $|A||B|<C_\nu=\left( 2\frac{\varGamma(2\nu)}{\varGamma(\nu+\frac{1}{2})}+2^{\nu+1}\right)^{-2}$, then
\begin{equation}\label{T1.02}
\int_{\mathbb{R}^{+}}|u_0(x)|^{2}dx\leq \frac{2\sqrt{C_{\nu}(T-S)}-\sqrt{\pi|A||B|}}{\sqrt{C_{\nu}(T-S)}-\sqrt{\pi|A||B|}}\left( \int_{A^{c}}|u(x,S;u_0)|^{2}dx+\int_{B^{c}}|u(x,T;u_0)|^{2}dx\right) .
\end{equation}
\item[(iii)] If $A=[0,a]$, $B=[0,b]$, $a,b>0$, then there exists a positive constant $C$ depending only on $\nu$ such that
 \begin{equation}\label{T1.01}
\int_{\mathbb{R}^{+}}|u_0(x)|^{2}dx\leq Ce^{C(1+\frac{ab}{T-S})}\left( \int_{[0,a]^{c}}|u(x,S;u_0)|^{2}dx+\int_{[0,b]^{c}}|u(x,T;u_0)|^{2}dx\right).
\end{equation}
\end{itemize}
\end{theorem}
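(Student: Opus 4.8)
The plan is to use the representation formula \eqref{equation1.7} of Lemma \ref{L1} to recast \eqref{T1.1} as a Nazarov-type uncertainty inequality for the Hankel transform $F_\nu$, and then to establish that inequality, together with its three quantitative refinements, as the analytic core. First I would reduce to $S=0$: since \eqref{equation1.1} is autonomous, replacing $u_0$ by $u(S;u_0)$ and $T$ by $\tau:=T-S$ turns \eqref{T1.1} into the same inequality with $S=0$, the left-hand side being unchanged because the flow is unitary. With $S=0$, set $g:=e^{i|\cdot|^{2}/4\tau}u_0\in L^2(\mathbb{R}^{+})$, so that $|g|=|u_0|$ pointwise, while Plancherel's theorem for the Hankel transform gives $\|F_\nu g\|_{L^2}=\|g\|_{L^2}=\|u_0\|_{L^2}$. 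Evaluating \eqref{equation1.7} at $t=\tau$ and performing the change of variables $x=2\tau\xi$ yields $\int_{B^{c}}|u(x,\tau;u_0)|^2\,dx=\int_{\widetilde B^{c}}|F_\nu g(\xi)|^2\,d\xi$ with $\widetilde B:=\tfrac{1}{2\tau}B=\{x/2\tau:x\in B\}$. Hence \eqref{T1.1} is equivalent to
\begin{equation*}
\int_{\mathbb{R}^{+}}|g|^2\le C\Big(\int_{A^{c}}|g|^2+\int_{\widetilde B^{c}}|F_\nu g|^2\Big),\qquad g\in L^2(\mathbb{R}^{+}),
\end{equation*}
that is, to a Nazarov-type uncertainty principle for $F_\nu$ attached to the pair $A,\widetilde B$.

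\emph{The general inequality.} It thus suffices to show that for measurable $A,\widetilde B\subset\mathbb{R}^{+}$ of finite measure there is $C(\nu,A,\widetilde B)>0$ with $\|f\|_{L^2}^2\le C(\nu,A,\widetilde B)\big(\|f\|_{L^2(A^{c})}^2+\|F_\nu f\|_{L^2(\widetilde B^{c})}^2\big)$ for all $f\in L^2(\mathbb{R}^{+})$. I would obtain this by adapting the Amrein--Berthier/Nazarov scheme: writing $F_\nu f=\mathbf{1}_{\widetilde B}F_\nu f+\mathbf{1}_{\widetilde B^{c}}F_\nu f$ and using the pointwise bound $\|F_\nu^{-1}(\mathbf{1}_{\widetilde B}F_\nu f)\|_{L^\infty}\le\big(\sup_{z>0}|\sqrt z\,J_\nu(z)|\big)\,|\widetilde B|^{1/2}\|f\|_{L^2}$, one gets $\|f\|_{L^2(A)}\le\rho_0\|f\|_{L^2}+\|F_\nu f\|_{L^2(\widetilde B^{c})}$ with $\rho_0$ controlled by $|A|^{1/2}|\widetilde B|^{1/2}$; combining this with an Amrein--Berthier-type covering/iteration argument over $A$ (the substitute for the missing group structure of the Bessel kernel) then yields $C(\nu,A,\widetilde B)$, and the reduction above gives \eqref{T1.1} with $C(\nu,A,B,T-S)=C\big(\nu,A,\tfrac{1}{2(T-S)}B\big)$.

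\emph{The explicit constants.} (i) If $\nu=\tfrac{k}{2}$ with $k$ a nonnegative integer, then $2(\nu+1)$ is an integer and $F_\nu$ is, after conjugation by the weight $x^{(2\nu+1)/2}$, the radial part of the Fourier transform on $\mathbb{R}^{2(\nu+1)}$ (for $k=1$, the sine transform); the spherically symmetric sets built over $A,\widetilde B$ have $\mathbb{R}^{2(\nu+1)}$-measure proportional to $\mu_\nu(A),\mu_\nu(\widetilde B)$, so transferring the sharp Euclidean inequality \eqref{equation1.6} gives the constant $Ce^{C\mu_\nu(A)\mu_\nu(\widetilde B)}$, and since $\mu_\nu(\tfrac{1}{2\tau}B)=(2\tau)^{-2(\nu+1)}\mu_\nu(B)$ this becomes \eqref{T1.03} after absorbing $2^{2(\nu+1)}$ into $C$. (ii) For general $\nu$, a sharp bound for the kernel constant $\sup_{z>0}|\sqrt z\,J_\nu(z)|$ (the quantity that enters as $\tfrac{1}{2}C_\nu^{-1/2}$) makes $\rho_0$ a strict contraction once $|A||\widetilde B|$ is small enough, and solving the resulting quadratic inequality for $\|f\|_{L^2}$ produces the rational constant of \eqref{T1.02}, the dependence on $T-S$ entering through $|\widetilde B|=|B|/2(T-S)$. (iii) If $A=[0,a]$ and $\widetilde B=[0,b']$ with $b'=b/2(T-S)$, I would split $F_\nu f=\mathbf{1}_{[0,R]}F_\nu f+\mathbf{1}_{[R,\infty)}F_\nu f$, bound the second piece by the observation term, and apply to the first piece an interpolation inequality of Logvinenko--Sereda type for functions whose Hankel transform is supported in $[0,R]$, of the form $\|h\|_{L^2}^2\le Ce^{CRa}\|h\|_{L^2([0,a]^{c})}^2$; optimising $R$ near $b'$ gives the constant $Ce^{C(1+ab/(T-S))}$ of \eqref{T1.01}.

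\emph{Main obstacle.} The crux is the general inequality above together with the interpolation inequality used in (iii): because the Bessel kernel $\sqrt{xy}\,J_\nu(xy)$ lacks the group/translation structure of $e^{ix\xi}$, the covering and iteration arguments underlying the theorems of Nazarov and of Logvinenko--Sereda must be rebuilt from quantitative size and oscillation estimates for $J_\nu$, and one has to track carefully how those estimates generate the precise dependence on $\mu_\nu(A)\mu_\nu(B)$, on $|A||B|$, and on $ab/(T-S)$ recorded in (i)--(iii).
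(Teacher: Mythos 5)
Your architecture is the paper's own: reduce, via the representation formula \eqref{equation1.7} and unitarity of the flow, to a Nazarov-type annihilating-pair inequality for $F_\nu$ applied to the pair $\bigl(A,\tfrac{1}{2(T-S)}B\bigr)$ (this reduction is exactly Lemma \ref{L5}, proved as in \cite{WWZ}, combined with Lemma \ref{L4}), and then extract the explicit constants by the same three mechanisms the paper uses: for (i) transference to the radial Fourier transform on $\mathbb{R}^{2(\nu+1)}$ and the Euclidean inequality \eqref{equation1.6} (Remark \ref{R1}), for (ii) the small-measure contraction argument with an explicit bound on $\sup_z\sqrt{z}\,|J_\nu(z)|$ (Remark \ref{R2} plus the paper's estimate of $k_\nu$), and for (iii) a spectral inequality for Hankel-band-limited functions. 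For (iii) you decompose on the transform side, while the paper cuts $u_0$ into $\mathbbm{1}_{\leq a}u_0+\mathbbm{1}_{>a}u_0$ and applies Theorem \ref{T5}; under \eqref{equation1.7} these are mirror images of one another and both rest on the same band-limited inequality (Corollary \ref{C2}).

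The genuine gaps are exactly the two ingredients you defer. First, your argument for the general statement (arbitrary finite-measure $A,B$) is not a proof: the $L^\infty$ bound on $F_\nu^{-1}(\mathbf{1}_{\widetilde B}F_\nu f)$ gives $\lVert f\rVert_{L^2(A)}\leq k_\nu\sqrt{|A||\widetilde B|}\,\lVert f\rVert_{L^2}+\lVert F_\nu f\rVert_{L^2(\widetilde B^{c})}$ and hence only the small-product regime, i.e.\ part (ii); an ``Amrein--Berthier-type covering/iteration over $A$'' does not upgrade this, since the inequality is not improved by decomposing $A$ into small pieces, and the classical Amrein--Berthier/Nazarov arguments use translation invariance of the Fourier kernel in an essential way. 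What is actually needed is compactness of $\mathbf{1}_{A}F_\nu\mathbf{1}_{\widetilde B}F_\nu$ together with a weak-annihilation (uniqueness) theorem for the Hankel transform, and this is precisely Theorem 4.3 of Ghobber--Jaming \cite{GJ}, which the paper does not reprove but imports as Theorem \ref{T2} and transfers to $F_\nu$ in Lemma \ref{L4}. Second, the Logvinenko--Sereda-type inequality $\lVert h\rVert_{L^2}^{2}\leq Ce^{CRa}\lVert h\rVert_{L^2([0,a]^{c})}^{2}$ for $\mathrm{supp}\,F_\nu h\subset[0,R]$, on which your (iii) hinges, is Corollary \ref{C2}, whose proof is the substantial interpolation Lemma \ref{L7} (Bessel-kernel derivative bounds, Lemma \ref{L6}, Hadamard three circles); you flag it as the main obstacle but supply no argument. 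So the proposal reproduces the paper's skeleton correctly, but the two analytic cores are missing, and the sketch you do give for the first would fail once $|A||B|$ is large.
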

The second one gives a unique continuation inequality at one time point for equation (\ref{equation1.1}) when the initial data have exponential decay at infinity.  It is interesting that this class of initial data is consistent with the free case and independent of the constant $\nu$.
\begin{theorem}\label{T3}
Given $\lambda,b,T>0$, the following conclusions hold: 
\begin{itemize}
\item[(i)] There exist constants $C=C(\nu)>0$ and $\theta=\theta(\nu)\in(0,1)$  such that for any $u_0\in C_0^{\infty}(\mathbb{R}^{+})$, we have\\
\\$\int_{\mathbb{R}^{+}}|u_0(x)|^{2}dx$\\
\begin{equation}\label{T3.1}
\begin{split}
\leq C\left( 1+\frac{b^{2\nu+2}}{(\lambda T)^{2\nu+2}}\right) 
\left( \int_{[0,b]^{c}}|u(x,T;u_0)|^{2}dx\right) ^{\theta^{1+b/(\lambda T)}} \left( \int_{\mathbb{R}^{+}}e^{\lambda x}|u_0(x)|^{2}dx\right) ^{1-\theta^{1+b/(\lambda T)}}.
\end{split}
\end{equation}
\item[(ii)] There exists a constant $C=C(\nu)>0$ such that for any $\beta>1$ and  $\gamma\in(0,1)$ and all $u_0\in C_0^{\infty}(\mathbb{R}^{+})$, we have
\begin{equation}\label{T3.2}
	\begin{split}
		\int_{\mathbb{R}^{+}}|u_0(x)|^{2}dx\leq Ce^{(\frac{C^{\beta}b^{\beta}}{\lambda(1-\gamma)T^{\beta}})^{1/(\beta-1)}}\left( \int_{[0,b]^c}|u(x,T;u_0)|^{2}dx\right) ^{\gamma}\left( \int_{\mathbb{R}^{+}}e^{\lambda x^{\beta}}|u_0(x)|^{2}dx\right) ^{1-\gamma}.
	\end{split}
\end{equation}
\item[(iii)] Let $\alpha(s)$, $s\in \mathbb{R}^{+}$, be an increasing function with $\lim_{s\rightarrow\infty}\alpha(s)/s=0$. Then for each $\gamma \in (0,1)$, there is no positive constant $C$ such that for any $u_{0}\in C_{0}^{\infty}(\mathbb{R}^{+})$,
\begin{equation}\label{T3.30}
\int_{\mathbb{R}^{+}}|u_0(x)|^{2}dx\leq C\left( \int_{[0,b]^c}|u(x,T;u_0)|^{2}dx\right) ^{\gamma}\left( \int_{\mathbb{R}^{+}}e^{\lambda \alpha(x)}|u_0(x)|^{2}dx\right) ^{1-\gamma}.
\end{equation}
\end{itemize}
\end{theorem}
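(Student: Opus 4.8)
The plan for proving Theorem \ref{T3} is to transfer all three statements to the Hankel side via the representation formula \eqref{equation1.7}, and then to exploit that the rate of decay of $u_{0}$ at infinity governs how far $F_{\nu}$ of a quadratically modulated datum extends holomorphically. Fix $T>0$, put $R:=b/(2T)$ and $g:=e^{i|\cdot|^{2}/4T}u_{0}$. Since \eqref{equation1.7} gives $|u(T,x)|^{2}=\tfrac1{2T}\,|F_{\nu}g(x/2T)|^{2}$, since $F_{\nu}$ is a unitary involution of $L^{2}(\mathbb{R}^{+})$, and since $|g|=|u_{0}|$, writing $h:=F_{\nu}g$ (so $g=F_{\nu}h$) we obtain
\[
\int_{\mathbb{R}^{+}}|u_{0}|^{2}=\|h\|_{L^{2}(\mathbb{R}^{+})}^{2},\qquad \int_{[0,b]^{c}}|u(x,T;u_{0})|^{2}dx=\int_{R}^{\infty}|h|^{2}=:N,\qquad \int_{\mathbb{R}^{+}}e^{\lambda\alpha(x)}|u_{0}|^{2}=\int_{\mathbb{R}^{+}}e^{\lambda\alpha(x)}|g|^{2}=:M_{\alpha}.
\]
So (i) and (ii) reduce to bounding $\|h\|_{L^{2}(\mathbb{R}^{+})}^{2}$ by $N$ and $M_{\alpha}$, with $\alpha(x)=x$ (resp.\ $x^{\beta}$), while (iii) reduces to exhibiting a family defeating any such bound. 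Throughout I use the elementary Bessel estimates $|\sqrt{w}\,J_{\nu}(w)|\le C_{\nu}e^{|\mathrm{Im}\,w|}$ for $|w|\ge1$ and $|\sqrt{w}\,J_{\nu}(w)|\le C_{\nu}|w|^{\nu+\frac12}$ for $|w|\le1$; recall also that for $u_{0}\in C_{0}^{\infty}(\mathbb{R}^{+})$ the function $g$ is smooth and compactly supported in $(0,\infty)$, so the manipulations below are legitimate.

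\emph{Part (i).} From $M:=\int e^{\lambda x}|g|^{2}<\infty$, the Bessel estimates and Cauchy--Schwarz show that $h(z)=\int_{0}^{\infty}\sqrt{zy}\,J_{\nu}(zy)\,g(y)\,dy$ is holomorphic on the strip $\Omega=\{\mathrm{Re}\,z>0,\ |\mathrm{Im}\,z|<\lambda/2\}$, with $|h(z)|\le C_{\nu}M^{1/2}(\lambda-2|\mathrm{Im}\,z|)^{-1/2}$ and with $h$ vanishing at the origin to order $\nu+\tfrac12$. I then upgrade the observed $L^{2}$-smallness $\int_{R}^{\infty}|h|^{2}=N$ to a pointwise bound $|h(z)|\le C_{\nu}(N+M)^{1/2}$ on a sub-strip lying over $(R/2,\infty)$ --- either by subharmonicity of $|h|^{2}$, or by cutting $h$ at Hankel-frequency $\rho$, controlling the band-limited part on the observed ray through the interpolation inequality for functions with compactly supported Hankel transform and the complementary part via $\int_{\rho}^{\infty}|g|^{2}\le e^{-\lambda\rho}M$. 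Finally I would invoke the two-constants (harmonic-measure) theorem on $\Omega$: comparing these small values on a far portion of the real axis with the global growth bound yields, for $0<x<R$,
\[
|h(x)|\le\bigl(C_{\nu}N^{1/2}\bigr)^{\omega(x)}\,\bigl(C_{\nu}M^{1/2}\bigr)^{1-\omega(x)},
\]
where $\omega(x)$ is the harmonic measure in $\Omega$ of the observed ray seen from $x$. Since $\Omega$ has width $\lambda$ and $R=b/(2T)$, one has $\omega(x)\ge\theta^{\,1+b/(\lambda T)}$ for a suitable $\theta=\theta(\nu)\in(0,1)$, the additive $1$ in the exponent keeping $\omega$ bounded away from $1$ uniformly. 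Squaring, integrating over $(0,R)$ and adding $N$ gives \eqref{T3.1}, the factor $1+(b/(\lambda T))^{2\nu+2}$ coming from the length $R$ of the unobserved interval together with the order-$(\nu+\frac12)$ vanishing of $h$ at $0$.

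\emph{Part (ii).} When $\beta>1$, $e^{\lambda x^{\beta}}$ dominates $e^{sx}$ for every $s>0$, and Laplace's method gives $\int_{0}^{\infty}e^{sy}|g(y)|\,dy\le C_{\nu}M^{1/2}\exp\!\bigl(c_{\beta}\,s^{\beta/(\beta-1)}\lambda^{-1/(\beta-1)}\bigr)$; hence $h=F_{\nu}g$ extends holomorphically to the right half-plane with $|h(z)|\le C_{\nu}M^{1/2}\exp\!\bigl(c_{\beta}\,|z|^{\beta/(\beta-1)}\lambda^{-1/(\beta-1)}\bigr)$, i.e.\ of order $\beta/(\beta-1)$ and type $\lesssim\lambda^{-1/(\beta-1)}$ (the $\beta$-dependence of the constants is bounded and absorbed into $C(\nu)$). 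I would then propagate the smallness of $h$ from $(R,\infty)$ into $(0,R)$ by a Phragmén--Lindelöf argument --- equivalently, by again cutting at Hankel-frequency $\rho$, bounding the band-limited part on the observed ray at cost $e^{C\rho R}$ via the same interpolation inequality and the tail via $\int_{\rho}^{\infty}|g|^{2}\le e^{-\lambda\rho^{\beta}}M$, then optimizing over $\rho\ge(2Cb/(\lambda T))^{1/(\beta-1)}$. Because $\beta>1$ the tail can be made as small as one pleases, which is exactly what makes the Hölder exponent $\gamma\in(0,1)$ \emph{free}; the smallest admissible $\rho$ then produces precisely the loss $\exp\!\bigl((C^{\beta}b^{\beta}/(\lambda(1-\gamma)T^{\beta}))^{1/(\beta-1)}\bigr)$, and adding $N$ gives \eqref{T3.2}.

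\emph{Part (iii).} Here one must show that $e^{\lambda x}$ is the borderline decay, which I would do by a counterexample on the Hankel side. By the reduction it suffices, for every $C$, to produce $g\in L^{2}(\mathbb{R}^{+})$ with $M_{\alpha}(g)<\infty$ but $\|g\|^{2}>C\,N^{\gamma}M_{\alpha}^{1-\gamma}$; by homogeneity we may take $\|g\|=1$ and seek $g_{n}$ with $N_{n}^{\gamma}M_{\alpha,n}^{1-\gamma}\to0$. Fix $L_{n}\to\infty$ and let $g_{n}$, normalized in $L^{2}$, be an extremizer --- a Hankel prolate-spheroidal eigenfunction --- among functions supported in $[0,L_{n}]$ for the $L^{2}$-mass of $F_{\nu}g_{n}$ inside $[0,R]$. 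Then $N_{n}=1-\lambda_{0}(L_{n}R)\le C\,e^{-c\,L_{n}R}$ by the known eigenvalue asymptotics of the time--band-limiting operator for the Hankel transform, while $M_{\alpha,n}=\int_{0}^{L_{n}}e^{\lambda\alpha(x)}|g_{n}|^{2}\le e^{\lambda\alpha(L_{n})}$ since $\alpha$ is increasing; therefore, for every fixed $\gamma\in(0,1)$,
\[
N_{n}^{\gamma}M_{\alpha,n}^{1-\gamma}\le C^{\gamma}\exp\!\bigl(-c\gamma L_{n}R+\lambda(1-\gamma)\alpha(L_{n})\bigr)\longrightarrow0,
\]
because $\alpha(L_{n})=o(L_{n})$ forces the negative term to dominate for $n$ large. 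A routine smoothing --- so that $u_{0}^{(n)}=e^{-i|\cdot|^{2}/4T}g_{n}$ lies in $C_{0}^{\infty}(\mathbb{R}^{+})$, altering $N_{n}$ and $M_{\alpha,n}$ negligibly --- then contradicts \eqref{T3.30} for every $C$. The main obstacles are, in (i)--(ii), the quantitative holomorphic continuation of the Hankel transform with control that is uniform as $z\to0$ and in $\lambda,b,T$, together with the precise harmonic-measure and Phragmén--Lindelöf estimates needed to recover the stated constants; and in (iii), the exponential decay of $1-\lambda_{0}(L_{n}R)$ for the Hankel time--band-limiting operator.
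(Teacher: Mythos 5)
Your reduction via the representation formula \eqref{equation1.7} (passing to $h=F_{\nu}g$, $g=e^{i|\cdot|^{2}/4T}u_{0}$, $R=b/2T$) is exactly the paper's change of variables, and your treatments of (ii) and (iii) track the paper's route in substance: for (ii) the paper also truncates at Hankel frequency $N$, applies the spectral inequality for Hankel-band-limited functions (its Corollary \ref{C2}) to the low part, bounds the tail by $e^{-\lambda(2TN)^{\beta}}$ times the weighted norm, and optimizes via Young's inequality; for (iii) the paper likewise defeats any sub-linear $\alpha$ with functions that are band-limited to $[0,N]$ and exponentially concentrated, except that it constructs them explicitly (truncated Gaussians, Lemma \ref{L.S.1}) rather than invoking eigenvalue asymptotics of the Hankel time--band-limiting operator. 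That asymptotic ($1-\lambda_{0}\lesssim e^{-cL_{n}R}$) is exactly the quantitative heart of (iii); you cite it as ``known'' and flag it as an obstacle, whereas the paper proves the equivalent fact by hand, so your (iii) is acceptable in outline but leaves its key estimate unproven.

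The genuine gap is in (i). The paper proves the underlying interpolation inequality (Lemma \ref{L7}/Corollary \ref{C1}) by a real-variable analyticity argument: the Poisson representation of $J_{\nu}(xy)/(xy)^{\nu}$ plus the weight $e^{\lambda y}$ give $|\partial_{x}^{k}(x^{-\nu-1/2}h)|\le Mk!C^{k}$, then the Apraiz--Escauriaza local interpolation lemma (Lemma \ref{L6}) converts $L^{2}$-smallness of $h$ on a subinterval of the observed ray into sup-smallness, and a chain of Hadamard three-circle estimates propagates it across $[0,R]$, producing the exponent $\theta^{1+b/(\lambda T)}$ and the $\mu_{\nu}$-factor. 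Your substitute --- holomorphic extension of $h$ to the strip $|\mathrm{Im}\,z|<\lambda/2$ and the two-constants theorem on the slit strip --- needs \emph{pointwise} smallness of $h$ on the slit $(R,\infty)$, and neither of your proposed upgrades from the observed $L^{2}$-smallness delivers it as stated: subharmonicity of $|h|^{2}$ bounds $|h(z)|^{2}$ by areal means over disks, which are not controlled by $\int_{R}^{\infty}|h(x)|^{2}dx$ (you observe only the line $\mathrm{Im}\,z=0$, $x>R$, not nearby horizontal segments); and the frequency-cutting alternative, if pushed to a conclusion, yields an interpolation exponent of the form $1-Cb/(\lambda T)$, which is vacuous once $b\gtrsim \lambda T$ and so cannot produce \eqref{T3.1} uniformly in $b,\lambda,T$ (used merely to get a sup bound on the ray it would still require a Bernstein-type estimate for the band-limited piece plus a final optimization you have not set up). So as written, the propagation-of-smallness step in (i) --- the one place where your route genuinely departs from the paper's --- is not established, and this is precisely the step the paper's Lemmas \ref{L6} and \ref{L7} are designed to supply.
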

The third one gives another unique continuation inequality at one time point for equation (\ref{equation1.1}) when the initial data have compact support. In addition, in what follows, $a\wedge b:=min\{a,b\}$. 
\begin{theorem}\label{T4}
Let $A=[a_{1}, a_{2}]$, $B=[b_{1},b_{2}]$, $a=a_{2}-a_{1}$, $b=b_{2}-b_{1}$, and $\lambda,T>0$, there exist constants $C=C(\nu)>0$ and $\theta=\theta(\nu)\in(0,1)$ such that for all  $u_0\in C_0^{\infty}(\mathbb{R}^{+})$:
\begin{equation}\label{T4.1}
\begin{split}
\int_{A}|u(x,T;u_0)|^{2}dx&\leq C(a_{2}^{2\nu+2}-a_{1}^{2\nu+2})((\lambda T)\wedge b)^{-(2\nu+2)}\\
 &~~~~\times\left(\int_{B}|u(x,T;u_0)|^{2}dx\right)^{\theta^{p}}\left( \int_{\mathbb{R}^{+}}e^{\lambda x}|u_0(x)|^{2}dx\right)^{1-\theta^{p}},
\end{split}
\end{equation}
with  
\begin{equation*}
p:=1+\frac{|x_0-x_1|+\frac{a}{2}+\frac{b}{2}}{(\lambda T)\wedge \frac{b}{2}}, 
\end{equation*} 
where $x_{0}$, $x_{1}$ be the center of $A$, $B$ respectively.
\end{theorem}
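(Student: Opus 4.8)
\emph{Sketch of the intended proof.} The strategy is to transport the estimate to the Hankel side through the representation \eqref{equation1.7}, and then to establish a quantitative propagation-of-smallness inequality for the resulting function, which is holomorphic in a horizontal strip because the transformed initial datum decays exponentially. Write $g:=e^{i|\cdot|^{2}/4T}u_{0}$ and $G:=F_{\nu}g$; since $u_{0}\in C_{0}^{\infty}(\mathbb{R}^{+})$ we have $g\in C_{0}^{\infty}(\mathbb{R}^{+})$ and $\int_{0}^{\infty}e^{\lambda x}|g|^{2}\,dx=\int_{0}^{\infty}e^{\lambda x}|u_{0}|^{2}\,dx$. By \eqref{equation1.7}, $|u(T,x)|^{2}=\tfrac{1}{2T}|G(x/2T)|^{2}$, so the substitution $y=x/2T$ turns \eqref{T4.1} into the assertion
\begin{equation*}
\int_{\widetilde{A}}|G(y)|^{2}\,dy\ \le\ C(\nu)\,\mu_{\nu}(\widetilde{A})\,\ell^{-(2\nu+2)}\Bigl(\int_{\widetilde{B}}|G(y)|^{2}\,dy\Bigr)^{\theta^{p}}\Bigl(\int_{0}^{\infty}e^{\lambda y}|g(y)|^{2}\,dy\Bigr)^{1-\theta^{p}},
\end{equation*}
where $\widetilde{A}=A/2T$, $\widetilde{B}=B/2T$, $\mu_{\nu}(I)=\int_{I}x^{2\nu+1}\,dx$ and $\ell=\tfrac{1}{2T}\bigl((\lambda T)\wedge\tfrac{b}{2}\bigr)$; the centres and lengths of $\widetilde{A},\widetilde{B}$ are those of $A,B$ scaled by $1/2T$, the number $p$ is invariant under this scaling, and $\mu_{\nu}(\widetilde{A})\,\ell^{-(2\nu+2)}$ equals a $\nu$-constant times $(a_{2}^{2\nu+2}-a_{1}^{2\nu+2})\bigl((\lambda T)\wedge b\bigr)^{-(2\nu+2)}$, so this display is exactly \eqref{T4.1}.

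To prove the Hankel-side inequality I would first record that, thanks to the exponential decay of $g$ (compact support being far more than enough), $G(z)=\int_{0}^{\infty}\sqrt{zy}\,J_{\nu}(zy)\,g(y)\,dy$ extends holomorphically to the strip $\{|\mathrm{Im}\,z|<\lambda/2\}$ — one uses the elementary bound $|\sqrt{w}\,J_{\nu}(w)|\le C(\nu)(1+|w|^{\nu+1/2})e^{|\mathrm{Im}\,w|}$, together with the factorisation $\sqrt{w}\,J_{\nu}(w)=w^{\nu+1/2}\times(\text{entire and even})$ to handle the branch point at the origin when $\nu\notin\mathbb{Z}$ — and that on each compact subset $K$ of a slightly thinner strip, $|G(z)|\le C(\nu,\lambda,K)\bigl(\int e^{\lambda y}|g|^{2}\bigr)^{1/2}$, while Plancherel for $F_{\nu}$ gives $\|G\|_{L^{2}(dy)}=\|g\|_{L^{2}(dy)}\le\|g\|_{L^{2}(e^{\lambda y}dy)}$. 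I would then apply the interpolation inequality for Hankel transforms highlighted in the abstract — the same device behind Theorem \ref{T3} — in the quantitative form: for intervals $I,J\subset\mathbb{R}^{+}$ with centres $c_{I},c_{J}$,
\begin{equation*}
\|G\|_{L^{2}(I)}^{2}\ \le\ C(\nu)\,\mu_{\nu}(I)\,\ell_{J}^{-(2\nu+2)}\bigl(\|G\|_{L^{2}(J)}^{2}\bigr)^{\theta^{q}}\bigl(\|g\|_{L^{2}(e^{\lambda y}dy)}^{2}\bigr)^{1-\theta^{q}},\qquad \ell_{J}\asymp\lambda\wedge|J|,
\end{equation*}
with $\theta=\theta(\nu)\in(0,1)$ and $q=1+\bigl(|c_{I}-c_{J}|+\tfrac12|I|+\tfrac12|J|\bigr)/\ell_{J}$. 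Taking $I=\widetilde{A}$, $J=\widetilde{B}$, so $|J|=b/2T$ and, with the normalisations arranged so that the scales match exactly, $\ell_{J}\asymp\ell$ and $q=p$, reproduces the displayed inequality, hence \eqref{T4.1}.

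The crux, and the step I expect to be the main obstacle, is the interpolation inequality itself — in particular extracting the precise exponent $\theta^{p}$ and pinning down the role of $b$. The mechanism is to join $J$ to $I$ by a chain of $O(q)$ overlapping conformal rectangles inside the strip, each on the local analyticity scale, to apply on each link the elementary two-interval Hadamard three-lines bound $\|G\|_{L^{2}(\mathrm{next})}\le M^{1-\theta}\|G\|_{L^{2}(\mathrm{prev})}^{\theta}$ with the single global majorant $M=\sup|G|$, and to multiply the resulting $O(q)$ inequalities; the delicate bookkeeping is that the usable analyticity scale \emph{at} $\widetilde{B}$ is $\min(\lambda T,b/2)$ rather than $\lambda T$, which is precisely why $b$ shows up both inside $p$ and in the prefactor.

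A minor technical nuisance throughout is the behaviour of $G$ near $0$ when $\nu$ is not a half-integer: the branch point of $J_{\nu}$ must be handled (via the $w^{\nu+1/2}\times(\text{even entire})$ factorisation) both in describing the strip of analyticity and in the $\mu_{\nu}$-weighted $L^{\infty}\!\to\!L^{2}$ estimate responsible for the power $2\nu+2$; for $\nu=k/2$ all of this collapses because $\sqrt{w}\,J_{\nu}(w)$ is then an elementary trigonometric function. By contrast, the reduction via \eqref{equation1.7}, the change of variables, and Plancherel for $F_{\nu}$ are entirely routine.
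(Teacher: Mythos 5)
Your proposal follows essentially the same route as the paper: reduce via the representation \eqref{equation1.7} to an estimate for a function whose Hankel transform lies in $C_0^{\infty}(\mathbb{R}^{+})$, and then invoke precisely the interpolation inequality the paper isolates as Lemma \ref{L7} (proved there by derivative bounds from the Poisson representation of $J_\nu$, a local analytic propagation-of-smallness estimate, and a Hadamard three-circle chain of disks — the same mechanism you sketch with rectangles and three lines). The only slip is that after your change of variables the exponent is $1+\bigl(|x_0-x_1|+\tfrac a2+\tfrac b2\bigr)/\bigl(2\bigl((\lambda T)\wedge\tfrac b2\bigr)\bigr)$ rather than $p$ itself, but since the quotient $\int_B|u(x,T;u_0)|^2dx\,/\int_{\mathbb{R}^{+}}e^{\lambda x}|u_0|^2dx\le 1$ this is repaired by the same monotonicity adjustment the paper uses when passing from $\alpha_1$ to $\alpha_2$.
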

We make the following remarks related to the above three theorems:
\begin{itemize}
\item[($\textbf{a}_{1}$)] Theorem \ref{T1} can be explained in the following two perspectives. From the unique continuation perspective, Theorem \ref{T1} is a unique continuation inequality at two time points for equation (\ref{equation1.1}). From (\ref{T1.1}), we find that
\begin{equation*}
u(x,S;u_{0})=0~on~A^{c},~~u(x,T;u_{0})=0~on~ B^{c}~~\Longrightarrow~u(x,t;u_{0})=0~on ~\mathbb{R}^{+}\times[0,\infty).
\end{equation*}
From the observability perspective, Theorem \ref{T1} is an observability inequality at two time points for equation (\ref{equation1.1}). Observing a solution outside finite measurable sets at two different times, one can recover the solution at any time.\\
In addition, we elaborate on the sharpness of Theorem \ref{T1} in the following sense: First, we can't expect to recover the solution by observing it at two different points in time, one point outside a bounded interval while the other inside a bounded interval (see Theorem \ref{T6}(i)). Second, we can't expect to recover the solution by observing it at one point in time outside a bounded interval and the other in a time interval $[0,T]$ inside a bounded interval (see Theorem \ref{T6}(ii)). Third, we can't expect to recover the solution at one point in time, one point over a subset $A\subset \mathbb{R}^{+}$  with $m(A^{c})>0$ (see Theorem \ref{T6}(iii)).
\item[($\textbf{a}_{2}$)] Theorem \ref{T3} is a unique continuation inequality at one time point for equation (\ref{equation1.1}).\\
From (\ref{T3.1}), we find that
\begin{equation*}
e^{\lambda x/2}u_{0}(x)\in L^{2}(\mathbb{R}^{+}),~~u(x,T;u_{0})=0~on~ [0,b]^{c}~~\Longrightarrow~u(x,t;u_{0})=0~on ~\mathbb{R}^{+}\times[0,\infty).
\end{equation*} 
From (\ref{T3.2}), we find that when $\beta>1$,
\begin{equation*}
e^{\lambda x^{\beta}/2}u_{0}(x)\in L^{2}(\mathbb{R}^{+}),~~u(x,T;u_{0})=0~on~ [0,b]^{c}~~\Longrightarrow~ u(x,t;u_{0})=0~on ~\mathbb{R}^{+}\times[0,\infty).
\end{equation*} 
In addition, we elaborate on the sharpness of Theorem \ref{T3} in the following sense: First, when $\beta\in (0,1)$, there is no constant $C$ such that the inequality (\ref{T3.2}) holds (see (\ref{T3.30})). Hence, when we expect by observing solutions at one time point and outside a bounded interval, we have requirements for the decay rate of the initial data. Second, when $[0,b]^{c}$ is replaced by $[0,b]$, (\ref{T3.1}) does not hold (see Theorem \ref{T7}(i)). So even though the initial data have exponential decay at infinity, we can't expect to recover the solution by observing it at one time point and inside a bounded interval.
\item[($\textbf{a}_{3}$)] Theorem \ref{T4} is a unique continuation inequality at one time point for equation (\ref{equation1.1}).\\
From (\ref{T4.1}), we find that
\begin{equation*}
e^{\lambda x/2}u_{0}(x)\in L^{2}(\mathbb{R}^{+}),~~u(x,T;u_{0})=0~on~B=[b_{1},b_{2}]~~\Longrightarrow~ u(x,t;u_{0})=0~on ~\mathbb{R}^{+}\times[0,\infty).
\end{equation*}
For unique continuation properties of Schr\"{o}dinger equations, we refer the readers to \cite{EKPV1,EKPV2,IK,SI} and the references therein.
\end{itemize}

We next present three consequences of the above main theorems.
\begin{theorem}\label{T5}
Let $b$, $T$, $N>0$, there exists a constant $C=C(\nu)>0$ such that for all $u_{0}\in L^{2}(\mathbb{R}^{+})$ with supp $u_{0}\subset [0,N]$:
	\begin{equation}\label{T5.1}
	\int_{\mathbb{R}^{+}}|u_{0}(x)|^{2}dx\leq e^{C(1+\frac{bN}{T})}\left( \int_{[0,b]^{c}}|u(x,T;u_0)|^{2}dx\right).
	\end{equation}
\end{theorem}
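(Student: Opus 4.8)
The plan is to derive Theorem \ref{T5} as a direct corollary of Theorem \ref{T3}(i), the unique continuation inequality at one time point for data with exponential decay. First I would observe that if $u_0\in L^2(\mathbb{R}^+)$ is supported in $[0,N]$, then $e^{\lambda x}|u_0(x)|^2$ is integrable for every $\lambda>0$, and in fact $\int_{\mathbb{R}^+}e^{\lambda x}|u_0(x)|^2\,dx\le e^{\lambda N}\int_{\mathbb{R}^+}|u_0(x)|^2\,dx$. To use \eqref{T3.1} I would first reduce to the case $u_0\in C_0^\infty(\mathbb{R}^+)$ by a density argument (the inequality \eqref{T5.1} passes to the $L^2$-closure because the solution map and the restriction to $[0,b]^c$ are continuous in $L^2$, and the support condition is closed under $L^2$-limits only if we first note that the $C_0^\infty$ functions supported in a slightly larger interval $[0,N']$ are dense among $L^2$-functions supported in $[0,N]$; alternatively one simply applies \eqref{T3.1}, which holds for $C_0^\infty$ data, and then takes limits carefully, absorbing the constants).

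The key computational step is to make the exponent $\theta^{1+b/(\lambda T)}$ and the prefactor $1+b^{2\nu+2}/(\lambda T)^{2\nu+2}$ both disappear by choosing $\lambda$ appropriately. Specifically, set $\lambda=b/T$ (or any fixed multiple thereof), so that $b/(\lambda T)=1$ is an absolute constant; then $\theta^{1+b/(\lambda T)}=\theta^2=:\theta_0\in(0,1)$ depends only on $\nu$, and the prefactor $1+b^{2\nu+2}/(\lambda T)^{2\nu+2}=1+1=2$. With this choice \eqref{T3.1} becomes
\begin{equation*}
\int_{\mathbb{R}^+}|u_0(x)|^2\,dx\le 2C\left(\int_{[0,b]^c}|u(x,T;u_0)|^2\,dx\right)^{\theta_0}\left(\int_{\mathbb{R}^+}e^{(b/T)x}|u_0(x)|^2\,dx\right)^{1-\theta_0}.
\end{equation*}
Using the support bound, $\int_{\mathbb{R}^+}e^{(b/T)x}|u_0(x)|^2\,dx\le e^{bN/T}\int_{\mathbb{R}^+}|u_0(x)|^2\,dx$, so the right-hand side is at most $2C\,e^{(1-\theta_0)bN/T}\left(\int_{[0,b]^c}|u(x,T;u_0)|^2\,dx\right)^{\theta_0}\left(\int_{\mathbb{R}^+}|u_0(x)|^2\,dx\right)^{1-\theta_0}$.

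The final step is a Young-type absorption: write $X=\int_{\mathbb{R}^+}|u_0|^2\,dx$ and $Y=\int_{[0,b]^c}|u(x,T;u_0)|^2\,dx$, so the above reads $X\le K Y^{\theta_0}X^{1-\theta_0}$ with $K=2C\,e^{(1-\theta_0)bN/T}$. Dividing by $X^{1-\theta_0}$ (harmless if $X=0$, and otherwise legitimate) gives $X^{\theta_0}\le K Y^{\theta_0}$, hence $X\le K^{1/\theta_0}Y=\left(2C\right)^{1/\theta_0}e^{(1-\theta_0)bN/(\theta_0 T)}Y$. Since $\theta_0$ and $C$ depend only on $\nu$, this is exactly \eqref{T5.1} with a new constant $C=C(\nu)$ after bounding $(2C)^{1/\theta_0}\le e^{C'}$ and $(1-\theta_0)/\theta_0\le C'$; absorbing the stray additive constant into the factor $e^{C(1+bN/T)}$ completes the proof. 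I expect no serious obstacle here — the only point requiring mild care is the density reduction from $L^2$ to $C_0^\infty$ while preserving (a possibly enlarged) compact support, and verifying that the explicit $\lambda$-choice is admissible in Theorem \ref{T3}(i), which it is since that theorem is stated for arbitrary $\lambda,b,T>0$.
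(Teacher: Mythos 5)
Your proposal is correct and follows essentially the same route as the paper: apply Theorem \ref{T3}(i) with $\lambda=b/T$ (after a density argument), use the support of $u_0$ to bound the weighted norm by $e^{bN/T}\int|u_0|^2\,dx$, and then absorb the $\left(\int|u_0|^2\,dx\right)^{1-\theta^2}$ factor to obtain \eqref{T5.1}. No meaningful differences from the paper's own argument.
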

\begin{theorem}\label{T8}
 Let $B=[b_{1},b_{2}]\subset \mathbb{R}^{+}$, $b=b_{2}-b_{1}$, $\lambda_{1}$, $\lambda_{2}$, $T>0$, there exists a constant $C=C(\nu)>0$ such that for all $u_{0}\in C_{0}^{\infty}(\mathbb{R}^{+})$ and $\varepsilon\in (0,1)$:\\
 \\$\int_{\mathbb{R}^{+}}e^{-\lambda_{2}x}|u(x,T;u_0)|^{2}dx$
 \begin{equation}\label{T8.1}
 \leq
 C(x_{0},b,\lambda_{1},\lambda_{2},T)\left( \varepsilon\int_{\mathbb{R}^{+}}e^{\lambda_{1}x}|u_{0}(x)|^{2}dx+\varepsilon e^{\varepsilon^{-1-\frac{C\lambda_{2}^{-1}}{(\lambda_{1}T)\wedge \frac{b}{2}}}}\int_{B}|u(x,T;u_0)|^{2}dx\right) ,
 \end{equation}
 where $C(x_{0},b,\lambda_{1},\lambda_{2},T):=exp\left\lbrace C\left(  1+\frac{x_{0}+\frac{b}{2}+\lambda_{2}^{-1}}{(\lambda_{1}T)\wedge \frac{b}{2}}\right)  \right\rbrace $, $x_{0}$ be the center of $B$.
 \end{theorem}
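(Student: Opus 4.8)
The plan is to derive Theorem~\ref{T8} from the one-point unique continuation inequality of Theorem~\ref{T4}, applied with $A=[0,R]$ for a suitable $R=R(\varepsilon)$, together with the unitarity of $e^{-iTH}$ on $L^{2}(\mathbb{R}^{+})$ and the trivial bound $e^{\lambda_{1}x}\ge 1$. Write $m:=(\lambda_{1}T)\wedge\tfrac b2$, $m_{0}:=(\lambda_{1}T)\wedge b$, $Y:=\int_{\mathbb{R}^{+}}e^{\lambda_{1}x}|u_{0}|^{2}\,dx$ and $Z:=\int_{B}|u(x,T;u_{0})|^{2}\,dx$, and let $\theta=\theta(\nu)\in(0,1)$, $C=C(\nu)$ be the constants furnished by Theorem~\ref{T4}; we shall enlarge $C$ a finite number of times. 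Since
\[
\int_{\mathbb{R}^{+}}e^{-\lambda_{2}x}|u(x,T)|^{2}\,dx\le\|u(\cdot,T)\|_{L^{2}}^{2}=\|u_{0}\|_{L^{2}}^{2}\le Y
\]
and the right-hand side of \eqref{T8.1} is at least $C(x_{0},b,\lambda_{1},\lambda_{2},T)\,\varepsilon\,Y$, inequality \eqref{T8.1} is trivial when $\varepsilon\ge 1/C(x_{0},b,\lambda_{1},\lambda_{2},T)$; hence we may assume $\varepsilon<1/C(x_{0},b,\lambda_{1},\lambda_{2},T)$, i.e.\ $\ln(1/\varepsilon)>L_{0}:=\ln C(x_{0},b,\lambda_{1},\lambda_{2},T)=C\bigl(1+\tfrac{x_{0}+b/2+\lambda_{2}^{-1}}{m}\bigr)$. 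This reduction is precisely what will make the estimates close.

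Set $R:=\lambda_{2}^{-1}\ln(1/\varepsilon)+m$ (so $[0,R]$ is a genuine interval) and split $\int_{\mathbb{R}^{+}}e^{-\lambda_{2}x}|u(x,T)|^{2}\,dx=\int_{0}^{R}+\int_{R}^{\infty}$. The tail is immediate from unitarity: $\int_{R}^{\infty}e^{-\lambda_{2}x}|u(x,T)|^{2}\,dx\le e^{-\lambda_{2}R}\|u_{0}\|_{L^{2}}^{2}\le e^{-\lambda_{2}R}Y\le\varepsilon Y$. For the bounded part, $\int_{0}^{R}e^{-\lambda_{2}x}|u(x,T)|^{2}\,dx\le\int_{[0,R]}|u(x,T)|^{2}\,dx$, and Theorem~\ref{T4} with $A=[0,R]$, $B=[b_{1},b_{2}]$, $\lambda=\lambda_{1}$ gives (recalling that $x_{0}$ denotes the center of $B$)
\[
\int_{[0,R]}|u(x,T)|^{2}\,dx\le C\,R^{2\nu+2}m_{0}^{-(2\nu+2)}\,Z^{\theta^{p}}Y^{1-\theta^{p}},\qquad p\le 2+\frac{\lambda_{2}^{-1}\ln(1/\varepsilon)+x_{0}+b/2}{m}.
\]
Now apply the weighted arithmetic--geometric inequality $Z^{\theta^{p}}Y^{1-\theta^{p}}\le\delta Y+\delta^{-(1-\theta^{p})/\theta^{p}}Z$ with $\delta:=\varepsilon\,m_{0}^{2\nu+2}R^{-(2\nu+2)}$, which after enlarging $C$ lies in $(0,1)$ and satisfies $R^{2\nu+2}m_{0}^{-(2\nu+2)}\delta=\varepsilon$. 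The coefficient of $Y$ thereby becomes $C\varepsilon$, and it remains only to control the coefficient of $Z$, namely
\[
E:=C R^{2\nu+2}m_{0}^{-(2\nu+2)}\delta^{-(1-\theta^{p})/\theta^{p}}=C\varepsilon\exp\!\bigl(\theta^{-p}\ln(1/\delta)\bigr).
\]

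The heart of the matter is the estimate $E\le C\varepsilon\,e^{\varepsilon^{-1-C\lambda_{2}^{-1}/m}}$, i.e.\ $\theta^{-p}\ln(1/\delta)\le\varepsilon^{-1-C\lambda_{2}^{-1}/m}$. With $\sigma:=\ln(1/\theta)$, the bound on $p$ and the choice $R=\lambda_{2}^{-1}\ln(1/\varepsilon)+m$ give
\[
\theta^{-p}\le\theta^{-2}\,e^{\sigma(x_{0}+b/2)/m}\,\varepsilon^{-\sigma/(\lambda_{2}m)},\qquad \ln(1/\delta)\le\bigl(1+\tfrac{2\nu+2}{\lambda_{2}m}\bigr)\ln(1/\varepsilon),
\]
and here the regime hypothesis does the work: from $L_{0}\ge C(x_{0}+b/2)/m$ one has $e^{\sigma(x_{0}+b/2)/m}\le\bigl(e^{L_{0}}\bigr)^{\sigma/C}=C(x_{0},b,\lambda_{1},\lambda_{2},T)^{\sigma/C}<\varepsilon^{-\sigma/C}$, which for $C\ge 2\sigma$ is a \emph{fractional} power of $\varepsilon^{-1}$. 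Feeding this back, $\theta^{-p}\ln(1/\delta)$ is bounded by $\varepsilon^{-\sigma/C-\sigma/(\lambda_{2}m)}\ln(1/\varepsilon)$ times a constant depending only on $\nu$ and $\lambda_{2}m$; since its exponent $\sigma/C+\sigma/(\lambda_{2}m)$ is strictly less than the target $1+C\lambda_{2}^{-1}/m$ --- with a deficit $1-\sigma/C+(C-\sigma)\lambda_{2}^{-1}/m$ that exceeds $\tfrac12$ and grows linearly in the very quantities ($x_{0}+b/2$ over $m$, and $\lambda_{2}^{-1}/m$) that make up $L_{0}$ --- the remaining $\ln(1/\varepsilon)$ factor and the leftover constant can be absorbed using $\ln(1/\varepsilon)>L_{0}$ and $\ln(1/\varepsilon)>2\ln\ln(1/\varepsilon)$, once $C=C(\nu)$ is large enough. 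This yields $\theta^{-p}\ln(1/\delta)\le\varepsilon^{-1-C\lambda_{2}^{-1}/m}$, hence the stated bound on $E$; collecting $\int_{0}^{R}$ and the tail gives $\int_{\mathbb{R}^{+}}e^{-\lambda_{2}x}|u(x,T)|^{2}\,dx\le(C+1)\varepsilon Y+C\varepsilon e^{\varepsilon^{-1-C\lambda_{2}^{-1}/m}}Z$, and since $C(x_{0},b,\lambda_{1},\lambda_{2},T)\ge e^{C}\ge C+1$ this is exactly \eqref{T8.1}.

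I expect the main obstacle to be precisely this estimate on $E$. The interpolation exponent $\theta^{p}$ of Theorem~\ref{T4} degrades like $\theta^{R/m}$ as the observation window $[0,R]$ grows, so the arithmetic--geometric weight $\delta^{-1/\theta^{p}}$ is a \emph{double} exponential in $R$; one has to verify that the choice $R\sim\lambda_{2}^{-1}\ln(1/\varepsilon)$ --- forced on us by the tail estimate --- keeps it below $\exp\!\bigl(\varepsilon^{-1-C\lambda_{2}^{-1}/m}\bigr)$, and simultaneously that the $x_{0}$- and $b$-dependent constants produced by the exponent $p$ stay below the prefactor $C(x_{0},b,\lambda_{1},\lambda_{2},T)$. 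Both are made possible only by the reduction to $\varepsilon<1/C(x_{0},b,\lambda_{1},\lambda_{2},T)$ --- which converts $C(x_{0},b,\lambda_{1},\lambda_{2},T)=e^{L_{0}}$ into a genuine power $\varepsilon^{-\sigma/C}$ of $\varepsilon^{-1}$ --- and by taking the universal constant $C=C(\nu)$ sufficiently large; the only genuine work is the uniform-in-$\varepsilon$ bookkeeping of all these constants.
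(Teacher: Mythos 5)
Your argument is correct, but it follows a genuinely different route from the paper's. The paper never truncates at a single $\varepsilon$-dependent radius: it covers $\mathbb{R}^{+}$ by the intervals $[2(k-1)\lambda_{2}^{-1},2k\lambda_{2}^{-1}]$, applies Theorem \ref{T4} on each piece, sums the resulting series with the summation Lemma \ref{L.R} to obtain first an intermediate logarithmic stability estimate $R_{\lambda_{2}}\le C_{3}\bigl(\ln\frac{A_{1}}{B_{1}}\bigr)^{-C_{2}\lambda_{2}((\lambda_{1}T)\wedge\frac b2)}A_{1}$, and then converts this to the $\varepsilon$-form by a monotone/inverse-function argument, treating separately the regimes $\lambda_{2}\le$ and $>\frac{1}{C_{2}((\lambda_{1}T)\wedge\frac b2)}$. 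You instead make the trivial-case reduction $\varepsilon<e^{-L_{0}}$, cut once at $R\sim\lambda_{2}^{-1}\ln(1/\varepsilon)$, apply Theorem \ref{T4} a single time on $[0,R]$, and pass to the $\varepsilon$-form directly by Young's inequality with the tuned weight $\delta=\varepsilon m_{0}^{2\nu+2}R^{-(2\nu+2)}$; the key verification that $\theta^{-p}\ln(1/\delta)\le\varepsilon^{-1-C\lambda_{2}^{-1}/m}$ is exactly where the work lies, and your bookkeeping (converting $e^{L_{0}}$ into the fractional power $\varepsilon^{-\sigma/C}$, using $L_{0}\ge C/(\lambda_{2}m)$ to tame the $(1+\frac{2\nu+2}{\lambda_{2}m})$ factor, and absorbing $\ln(1/\varepsilon)$ into the exponent deficit) is sound, though the final absorption is asserted rather than fully written out. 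What the two approaches buy: the paper's route produces the quantitative log-convexity estimate (\ref{T8.2}) as a by-product of independent interest and separates the parameter dependence cleanly through Lemma \ref{L.R} and the explicit Gamma-function constants, at the cost of the series summation and the two-regime inverse-function step; your route dispenses with Lemma \ref{L.R} and the regime splitting in $\lambda_{2}$ entirely, reaching (\ref{T8.1}) with the same shape of constants, but hides the quantitative content inside the uniform-in-$\varepsilon$ constant-tracking and does not yield the intermediate logarithmic estimate.
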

 \begin{theorem}\label{T9}
  Let $B=[b_{1},b_{2}]\subset \mathbb{R}^{+}$, $b=b_{2}-b_{1}$, $\lambda$, $T>0$, there exists a constant $C=C(\nu)>0$ such that for all $u_{0}\in C_{0}^{\infty}(\mathbb{R}^{+})$ and $\varepsilon\in (0,1)$: \\
  \\$\int_{\mathbb{R}^{+}}|u_0(x)|^{2}dx$
  \begin{equation}\label{T9.1}
  \begin{split}
  &\leq
  C(x_{0},b,\lambda,T)\left( \varepsilon\left( \int_{\mathbb{R}^{+}}e^{\lambda x}|u_{0}(x)|^{2}dx+\lVert u_{0}\rVert^{2}_{H^{4([\nu]+3)}(\mathbb{R}^{+})}+\int_{0}^{\infty}\frac{1}{x^{4([\nu]+3)}} |u_{0}|^{2}dx\right)\right. \\
  &~~~~~~~~~~~~~~~~~~~~~~~~~~~~~~~~~~~~~~~~~~~~~~~~~~~~~~~~~~~~~~~~~~~~~ \left. +\varepsilon e^{\varepsilon^{-2}}\int_{B}|u(x,T;u_0)|^{2}dx\right) ,
   \end{split}
  \end{equation}
  where $C(x_{0},b,\lambda,T):=\left( T+\frac{1}{T} \right) ^{[\nu]+3}(1+T)^{4([\nu]+3)}e^{C^{1+ \frac{x_{0}+\frac{b}{2}+1}{(\lambda T)\wedge \frac{b}{2}}}} $, $x_{0}$ be the center of $B$, $[\nu]$ stands for  the integral part of $\nu$ .
  \end{theorem}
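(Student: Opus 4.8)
The plan is to derive Theorem~\ref{T9} from the interpolation inequality of Theorem~\ref{T4} together with a quantitative spatial decay estimate for $u_{T}:=u(\cdot,T;u_{0})$, the decay estimate being the only place where the Sobolev and weight hypotheses on $u_{0}$ are used. Write $L_{\nu}:=-\partial_{x}^{2}+(\nu^{2}-\tfrac14)x^{-2}$, $s:=4([\nu]+3)$, and $G:=F_{\nu}\big(e^{i|\cdot|^{2}/4T}u_{0}\big)$. By the identity \eqref{equation1.7}, $(2T)|u_{T}(x)|^{2}=|G(x/2T)|^{2}$, and since $F_{\nu}$ is an isometry of $L^{2}(\mathbb{R}^{+})$ this gives at once $\int_{\mathbb{R}^{+}}|u_{T}|^{2}=\int_{\mathbb{R}^{+}}|G|^{2}=\int_{\mathbb{R}^{+}}|u_{0}|^{2}$, so it suffices to bound $\int_{\mathbb{R}^{+}}|u_{T}|^{2}$. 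I would split $\int_{\mathbb{R}^{+}}|u_{T}|^{2}=\int_{x<R}|u_{T}|^{2}+\int_{x\ge R}|u_{T}|^{2}$ at a scale $R=R(\varepsilon,T)$ to be fixed last, treating the head by Theorem~\ref{T4} and the tail by decay.

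\emph{Decay estimate (the new ingredient).} With $x=2Ty$, $\int_{x\ge R}|u_{T}|^{2}=\int_{y\ge R/2T}|G(y)|^{2}\,dy\le(2T/R)^{2s}\int_{\mathbb{R}^{+}}y^{2s}|G(y)|^{2}\,dy$. The Hankel transform intertwines $L_{\nu}$ with multiplication by the square of the variable, $F_{\nu}(L_{\nu}f)(y)=y^{2}F_{\nu}f(y)$ for $f\in C_{0}^{\infty}(\mathbb{R}^{+})$ (from the Bessel identity $L_{\nu}^{(x)}[\sqrt{xy}\,J_{\nu}(xy)]=y^{2}\sqrt{xy}\,J_{\nu}(xy)$ and an integration by parts without boundary terms); iterating $s/2$ times and using the isometry, $\int y^{2s}|G|^{2}=\|L_{\nu}^{s/2}\big(e^{i|\cdot|^{2}/4T}u_{0}\big)\|_{L^{2}}^{2}=\|\widetilde{L}^{\,s/2}u_{0}\|_{L^{2}}^{2}$, where $\widetilde{L}:=e^{-i|x|^{2}/4T}L_{\nu}\,e^{i|x|^{2}/4T}=L_{\nu}-\tfrac{ix}{T}\partial_{x}+\tfrac{x^{2}}{4T^{2}}-\tfrac{i}{2T}$ is an order-two operator. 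Its power $\widetilde{L}^{\,s/2}$ is a differential operator of order $s=4([\nu]+3)$ whose coefficients are polynomials in $x$ of degree $\le s$, in $x^{-1}$ of degree $\le s$ (most singular $x^{-s}$, from the potential), and in $T^{-1}$; a term-by-term Leibniz estimate, absorbing each positive power $x^{j}$ into $e^{\lambda x}$ via $x^{j}\le C_{j,\lambda}e^{\lambda x}$, then gives
\[
\int_{x\ge R}|u_{T}|^{2}\ \le\ C(\nu)\,\varpi(T)\,R^{-8([\nu]+3)}\Big(\|u_{0}\|_{H^{4([\nu]+3)}}^{2}+\int_{0}^{\infty}x^{-4([\nu]+3)}|u_{0}|^{2}+\int_{\mathbb{R}^{+}}e^{\lambda x}|u_{0}|^{2}\Big),
\]
with $\varpi(T)$ a product of powers of $T$ and $T^{-1}$ comparable to $(T+\tfrac1T)^{[\nu]+3}(1+T)^{4([\nu]+3)}$.

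\emph{Head and combination.} On $[0,R]$, Theorem~\ref{T4} (applied on $A=[0,R]$, or on a covering family of subintervals and summed) gives $\int_{x<R}|u_{T}|^{2}\le M(R,T)\big(\int_{B}|u_{T}|^{2}\big)^{\theta^{p(R)}}\big(\int_{\mathbb{R}^{+}}e^{\lambda x}|u_{0}|^{2}\big)^{1-\theta^{p(R)}}$, with $M(R,T)$ polynomial in $R$ and $p(R)\asymp1+(x_{0}+R+b)/((\lambda T)\wedge b/2)$, $x_{0}$ the center of $B$. Using the weighted Young inequality $X^{a}Y^{1-a}\le\eta X+\eta^{-a/(1-a)}Y$ ($\eta>0$, $a\in(0,1)$) with $a=\theta^{p(R)}$, $X=\int_{B}|u_{T}|^{2}$, $Y=\int_{\mathbb{R}^{+}}e^{\lambda x}|u_{0}|^{2}$, a parameter $\eta=\eta(\varepsilon)$, and adding the tail bound, one arrives at $\int_{\mathbb{R}^{+}}|u_{T}|^{2}\le A_{1}(\varepsilon)\int_{B}|u_{T}|^{2}+A_{2}(\varepsilon)\big(\|u_{0}\|_{H^{4([\nu]+3)}}^{2}+\int_{0}^{\infty}x^{-4([\nu]+3)}|u_{0}|^{2}+\int_{\mathbb{R}^{+}}e^{\lambda x}|u_{0}|^{2}\big)$. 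It then remains to pick $R$ large enough that the tail is absorbed and $\eta$ large enough that the $Y$-part of $A_{2}$ is absorbed, so that $A_{2}(\varepsilon)\le\varepsilon\,C(x_{0},b,\lambda,T)$ and $A_{1}(\varepsilon)\le\varepsilon e^{\varepsilon^{-2}}C(x_{0},b,\lambda,T)$ with the constant of \eqref{T9.1}; since $\varepsilon e^{\varepsilon^{-2}}\ge e$ on $(0,1)$, the observation integral is itself trivially covered. Unwinding these choices gives \eqref{T9.1}.

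\emph{Main obstacle.} Two steps require real care. The first is the decay estimate: carrying out the Leibniz expansion of $\widetilde{L}^{\,s/2}$ against the quadratic phase, bounding every resulting term — in particular the $x^{-2}$-powers from the potential, harmless for $u_{0}\in C_{0}^{\infty}(\mathbb{R}^{+})$ but entering quantitatively through $\int x^{-4([\nu]+3)}|u_{0}|^{2}$ — and keeping track of the powers of $T$ so that they collapse into $(T+\tfrac1T)^{[\nu]+3}(1+T)^{4([\nu]+3)}$. The second, and the one I expect to be the genuine crux, is the quantitative bookkeeping in the combination step: the cutoff $R$ must grow with $\varepsilon^{-1}$ in order to absorb the tail, which makes $p(R)$ — and hence $\theta^{-p(R)}$ — large, and it is exactly in controlling the resulting growth of $\eta(\varepsilon)$ against the gain needed on $Y$ that both the doubly exponential prefactor $e^{C^{1+(x_{0}+b/2+1)/((\lambda T)\wedge b/2)}}$ and the weight $\varepsilon e^{\varepsilon^{-2}}$ in \eqref{T9.1} are produced; getting precisely these weights calls for an optimized joint choice of $R$ and $\eta$.
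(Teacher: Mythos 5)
Your two ingredients are individually in the spirit of the paper: the decay estimate you sketch is essentially the paper's Lemma \ref{L.S.2} (there the moment $\int x^{2k}|u(x,T)|^{2}dx$ is rewritten via \eqref{equation1.7} as $(2T)^{2k}\|H_{\nu}^{k/2}(e^{ix^{2}/4T}u_{0})\|^{2}$ and expanded through the quadratic form $\langle H_{\nu}^{k}f,f\rangle$; note that in your term-by-term version the positive powers of $x$ generated by the phase multiply \emph{derivatives} of $u_{0}$, so they cannot simply be absorbed into $e^{\lambda x}$ --- the paper's pairing/Cauchy--Schwarz step, which is why $\int x^{8([\nu]+3)}|u_{0}|^{2}$ appears before absorption, is what keeps only the three admissible quantities), and your head estimate is Theorem \ref{T4}. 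The architecture, however, is genuinely different from the paper's, and it is exactly at the step you call the crux that the plan breaks down.

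Since the only tail control available from the admissible quantities is the $2s$-th moment with $s=4([\nu]+3)$, your cutoff is forced to satisfy $R\gtrsim(\varpi(T)/\varepsilon)^{1/(2s)}$. Theorem \ref{T4} then enters with exponent $a=\theta^{p(R)}$, where $p(R)\gtrsim R/((\lambda T)\wedge\frac{b}{2})$, so $1/a$ is exponentially large in $R$; making the coefficient of $\int e^{\lambda x}|u_{0}|^{2}$ of size $\varepsilon$ in the Young step forces $\eta\ge (M/\varepsilon)^{(1-a)/a}$, and hence the coefficient of $\int_{B}|u(x,T)|^{2}dx$ is at least $\exp\left( \theta^{-p(R)}\ln(1/\varepsilon)\right) \ge \exp\left( \exp\left( c\,\varepsilon^{-1/(8([\nu]+3))}\right) \right)$ with a data-dependent $c$ sitting inside the inner exponential. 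This exceeds the stated weight $\varepsilon e^{\varepsilon^{-2}}$ for small $\varepsilon$ and is not of the product form (data constant)$\times$(function of $\varepsilon$ alone), so no joint optimization of $R$ and $\eta$ yields \eqref{T9.1}; covering $[0,R]$ by unit intervals does not help, because without a decaying weight the sum is dominated by the interval nearest $R$. The paper avoids any $\varepsilon$-dependent spatial scale: it splits $\int|u_{0}|^{2}$ by H\"older against the fixed weights $(1+x)^{2([\nu]+3)}$ and $(1+x)^{-(2\nu+4)}$, bounds the growing-weight factor once and for all by Lemma \ref{L.S.2}, bounds the decaying-weight factor by applying Theorem \ref{T4} on every unit interval $[k-1,k]$ and summing the terms $k^{-2}(B_{2}/A_{2})^{\theta^{p_{k}}}$ with Lemma \ref{L.R}(ii) --- the weight $k^{-2}$, supplied precisely by the H\"older split, is what renders the far intervals harmless --- arriving at the $\varepsilon$-free log-log bound \eqref{T9.9}, and only then converts it to the $\varepsilon$-form by a monotonicity/inversion argument. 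Note also that this conversion actually produces the weight $\varepsilon e^{e^{\varepsilon^{-2}}}$ (see \eqref{T9.14} and its use in Theorem \ref{T66}), so the single exponential printed in \eqref{T9.1} seems to be a misprint; but even granting that weaker target, your scheme would still require an additional data-dependent absorption in $\varepsilon$ and would not reproduce the stated shape of $C(x_{0},b,\lambda,T)$ without being rebuilt along the paper's lines.
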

  Finally, we should point out that, although the proofs in this article are inspired by their approach in \cite{WWZ} for the free Schrödinger equation, but technically, we need to shift from discussing Fourier transform to discussing Hankel transform which will create some new difficulties. Such as, to prove observability inequalities at two time points similar to \eqref{equation1.4}, we need to prove a Nazarov type uncertainty principle (see Lemma \ref{L4} below) for the Hankel transform $F_\nu$. To prove Theorems \ref{T3}\textendash\ref{T4}, we need to establish the interpolation Lemma \ref{L7}. To prove Theorem \ref{T9}, we need a regularity estimate Lemma \ref{L.S.2} for equation \eqref{equation1.1}, etc. And due to the complexity of proving these corollaries \ref{T5}\textendash\ref{T9}, for the completeness of this article, we will provide all the details of their proof.
 \par {\bf Plan of the paper.} The rest of the paper is organized as follows: Section 2 provides some preliminaries, these preliminaries will appear in the proofs of our theorems here and there. In Section 3, we give the proofs of Theorems \ref{T1}\textendash\ref{T4}. In Section 4, we show the proofs of Theorems \ref{T5}\textendash\ref{T9}.  Section 5 provides the sharpness results of Theorems \ref{T1} and \ref{T3}. Section 6 is devoted to applications to controllability for the Schr\"{o}dinger equation.

\section{\bf Preliminaries}
In this section, we give the exact meaning of equation \eqref{equation1.1} and give the proof of the identity \eqref{equation1.7}. Our equation \eqref{equation1.1} involve Schr\"{o}dinger operators:
\begin{equation}\label{Sec2.3.1}
 \displaystyle{H_\alpha=-\partial_{x}^{2}+\frac{\alpha}{x^2}},\,\,\,\alpha \geq -\frac{1}{4},
 \end{equation}
where $\alpha=\nu^{2}-\frac{1}{4}$. The well-known classical Hardy inequality (see \cite{H}) says, for every $u\in C_0^{\infty}(\mathbb{R}^{+})$:
\begin{equation}\label{Sec2.3.2}
\frac{1}{4}\int_0^{\infty}\frac{|u(x)|^{2}}{x^{2}}dx\leq\int_{0}^{\infty}|u'(x)|^{2}dx,
\end{equation}
and $\frac{1}{4}$ is the best constant. 
\par By this inequality we get that $-\partial_{x}^{2}+\frac{\alpha}{x^2}$ is form-bounded from below on $C_0^{\infty}(\mathbb{R}^{+})$ if and only if $\alpha \geq -\frac{1}{4}$. So there exists Friedrichs extension of this operator. And for $\nu\geq1$, the differential operator $L_\nu=-\partial_{x}^{2}+(\nu^{2}-\frac{1}{4})\frac{1}{x^2}$ with
domain $C_0^{\infty}(\mathbb{R}^{+})$ is essentially self-adjoint, and we denote by $H_\nu$ its closure, which exactly the Friedrichs extension. We note that if $0\leq\nu<1$, the operator $L_\nu$ is not essentially self-adjoint. And if $0<\nu<1$, this operator has exactly two distinct homogeneous extensions which are precisely the operators $H_\nu$ and $H_{-\nu}$: they are the Friedrichs and Krein extension of $L_\nu$ respectively. Throughout this paper we only consider the Friedrichs extension $H_\nu$ of $-\partial_{x}^{2}+(\nu^{2}-\frac{1}{4})\frac{1}{x^2}$ on $C_0^{\infty}(\mathbb{R}^{+})$, it is equivalent to the self-adjoint extension of $-\partial_{x}^{2}+(\nu^{2}-\frac{1}{4})\frac{1}{x^2}$ in $L^{2}(\mathbb{R}^{+})$ with Dirichlet boundary condition at $x=0$. We refer the readers to \cite[Section X.3]{RS1}
for the general theory of such extensions. We also refer the readers to \cite{LJV,JS} for extensions for more general parameter $\nu\in\mathbb{C}$ and some properties of that family of the operators $H_v$.
\begin{lemma}\label{L1}
$F_\nu$ is a unitary involution on $L^{2}(0,\infty)$ diagonalizing  $H_\nu$, precisely,
\begin{equation}\label{L1.1}
F_\nu H_\nu F_\nu^{-1}=Q^{2},
\end{equation}
where $Q^{2}f(x)=x^{2}f(x)$ and $\nu\geq0$.
\end{lemma}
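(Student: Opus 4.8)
The plan is to combine three ingredients: the classical Hankel inversion theorem, the fact that the kernel of $F_\nu$ consists of eigenfunctions of the differential expression $L_\nu=-\partial_x^{2}+(\nu^{2}-\tfrac14)x^{-2}$, and a core argument that promotes a pointwise identity to the operator identity \eqref{L1.1}. Throughout I write $\phi_\lambda(y):=\sqrt{\lambda y}\,J_\nu(\lambda y)$, so that $F_\nu(f)(x)=\int_0^\infty\phi_x(y)f(y)\,dy$.

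First I would record that $F_\nu$ is a unitary involution. The kernel $\sqrt{xy}\,J_\nu(xy)$ is symmetric in its two arguments, and since $J_\nu$ is real-valued on $\mathbb{R}^{+}$ for $\nu\geq0$, Fubini gives $\langle F_\nu f,g\rangle=\langle f,F_\nu g\rangle$ for $f,g\in C_0^{\infty}(\mathbb{R}^{+})$; together with the Hankel--Plancherel identity $\|F_\nu f\|_{L^{2}(0,\infty)}=\|f\|_{L^{2}(0,\infty)}$ and the inversion formula $F_\nu\circ F_\nu=\mathrm{Id}$ (both valid for $\nu\geq-\tfrac12$; see Watson's treatise on Bessel functions), this shows that $F_\nu$ extends to a unitary operator on $L^{2}(0,\infty)$ with $F_\nu^{-1}=F_\nu=F_\nu^{*}$, which is the ``unitary involution'' part of the statement.

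Next I would verify the eigenfunction property. Writing $g(x)=x^{1/2}J_\nu(\lambda x)$ and differentiating twice, then substituting Bessel's equation $z^{2}J_\nu''(z)+zJ_\nu'(z)+(z^{2}-\nu^{2})J_\nu(z)=0$ with $z=\lambda x$, one obtains after cancellation $-g''(x)+(\nu^{2}-\tfrac14)x^{-2}g(x)=\lambda^{2}g(x)$; that is, $L_\nu\phi_\lambda=\lambda^{2}\phi_\lambda$. Since $\phi_x(y)=\sqrt{xy}\,J_\nu(xy)$ is exactly the kernel of $F_\nu$ with the spectral and spatial variables interchanged, for $f\in C_0^{\infty}(\mathbb{R}^{+})$ --- whose support is a compact subset of $(0,\infty)$, so that integration by parts in $y$ produces no boundary contributions at $0$ or $\infty$ --- two integrations by parts give $F_\nu(L_\nu f)(x)=\int_0^\infty(L_\nu\phi_x)(y)f(y)\,dy=x^{2}\int_0^\infty\phi_x(y)f(y)\,dy=x^{2}F_\nu(f)(x)$. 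Because $L_\nu f\in C_0^{\infty}(\mathbb{R}^{+})\subset L^{2}(0,\infty)$ (the factor $x^{-2}$ is smooth on the support of $f$), it follows that $F_\nu f\in\mathrm{Dom}(Q^{2})$ and $F_\nu(L_\nu f)=Q^{2}F_\nu f$ on $C_0^{\infty}(\mathbb{R}^{+})$.

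Finally I would run the core argument. Set $\widetilde H:=F_\nu^{-1}Q^{2}F_\nu$; being a unitary conjugate of the self-adjoint multiplication operator $Q^{2}$ it is self-adjoint, and by the previous step it extends $L_\nu$ restricted to $C_0^{\infty}(\mathbb{R}^{+})$. For $\nu\geq1$ the operator $L_\nu$ on $C_0^{\infty}(\mathbb{R}^{+})$ is essentially self-adjoint with closure $H_\nu$, so $H_\nu\subseteq\widetilde H$ and, both being self-adjoint, $H_\nu=\widetilde H$, which is \eqref{L1.1}. For $0\leq\nu<1$ there are several self-adjoint extensions, and the delicate point --- the one I expect to be the main obstacle --- is to check that $\widetilde H$ is the Friedrichs extension and not, say, the Krein extension: on $C_0^{\infty}(\mathbb{R}^{+})$ the quadratic form of $\widetilde H$ equals $\|QF_\nu u\|^{2}=\langle L_\nu u,u\rangle=\int|u'|^{2}+(\nu^{2}-\tfrac14)\int|u|^{2}x^{-2}$, which is exactly the form defining $H_\nu$, so it remains to identify the form domain of $\widetilde H$, namely $F_\nu^{-1}\{g:Qg\in L^{2}\}$, with the completion of $C_0^{\infty}(\mathbb{R}^{+})$ in that form norm --- equivalently, to note that the asymptotics $\phi_\lambda(x)\sim c_\nu(\lambda x)^{\nu+1/2}$ as $x\to0^{+}$ single out the $x^{\nu+1/2}$ (Friedrichs) behavior at the origin rather than the $x^{1/2-\nu}$ (Krein) one. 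A shortcut, if preferred, is to invoke the known spectral decomposition of the Friedrichs extension $H_\nu$ via the Hankel transform recorded in \cite{RS1,LJV,JS}, which makes \eqref{L1.1} immediate once the eigenfunction computation above is in hand; all the remaining steps are either classical (Hankel inversion and Plancherel) or a routine computation (the Bessel ODE and the two integrations by parts).
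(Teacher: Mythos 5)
The paper itself never proves this lemma: it is stated in Section 2 without proof and is implicitly taken from the literature on the Bessel operator (cf.\ \cite{KT}, and \cite{LJV,JS}, where the identity $F_\nu H_\nu F_\nu^{-1}=Q^{2}$ for the Friedrichs/homogeneous extensions is established). So your proposal is supplying an argument the paper omits, and its skeleton is exactly the standard one: Hankel--Plancherel and the inversion formula give that $F_\nu$ is a self-adjoint unitary involution; the Bessel equation shows $L_\nu\bigl(\sqrt{\lambda y}\,J_\nu(\lambda y)\bigr)=\lambda^{2}\sqrt{\lambda y}\,J_\nu(\lambda y)$; two integrations by parts (legitimate, since $f\in C_0^{\infty}(\mathbb{R}^{+})$ has support away from $0$ and $\infty$) give $F_\nu(L_\nu f)=Q^{2}F_\nu f$; and for $\nu\geq1$ essential self-adjointness of $L_\nu$ on $C_0^{\infty}(\mathbb{R}^{+})$ closes the argument, since the unitary conjugate $\widetilde H=F_\nu^{-1}Q^{2}F_\nu$ is a self-adjoint extension of $L_\nu|_{C_0^{\infty}}$ and must coincide with the unique one, $H_\nu$. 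Up to this point your argument is complete and correct.

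The genuine gap is the case $0\leq\nu<1$, which you correctly identify as the delicate point but do not close. The observation that the kernel behaves like $c_\nu(\lambda x)^{\nu+1/2}$ as $x\to0^{+}$ is only a heuristic: it does not by itself identify the form domain of $\widetilde H$, namely $F_\nu^{-1}\{g:xg\in L^{2}\}$, with the form closure of $C_0^{\infty}(\mathbb{R}^{+})$, nor does it show that elements of $\mathrm{Dom}(\widetilde H)$ carry no $x^{1/2-\nu}$ component at the origin; an actual proof must establish one of these statements (for instance, by showing that every $u\in\mathrm{Dom}(\widetilde H)$, equivalently every $F_\nu^{-1}g$ with $x^{2}g\in L^{2}$, can be approximated in the form norm by elements of $C_0^{\infty}(\mathbb{R}^{+})$, or by invoking the characterization of the Friedrichs extension as the unique self-adjoint extension whose domain lies in the form domain of the closure). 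Note also that at $\nu=0$ your dichotomy degenerates: the two local behaviors are $x^{1/2}$ and $x^{1/2}\ln x$, not $x^{\nu+1/2}$ versus $x^{1/2-\nu}$, so the argument would need a separate word there. Finally, the ``shortcut'' of invoking the known spectral decomposition of $H_\nu$ via the Hankel transform from \cite{RS1,LJV,JS} is not really a shortcut within your proof; it is simply the citation of the lemma itself, which is in effect what the paper does.
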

With Lemma \ref{L1} in hand, we can get the integral representation of the unitary group $e^{-itH_\nu}$. The proof of the following result can be seen in the process of the proof of Theorem 2.4 in \cite{KT}, which deals with the dispersive estimate of \eqref{equation1.1}.
\begin{lemma}\label{L3}
For every $f\in L^{2}(\mathbb{R}^{+})$, we have
\begin{equation}\label{L3.1}
\begin{aligned}
(e^{-itH_\nu}f)(x)&=\frac{1}{2it}\int_0^{\infty}\sqrt{xy}J_\nu\left( \frac{xy}{2t}\right) e^{-\frac{x^{2}+y^{2}}{4it}}e^{\frac{-i\nu\pi}{2}}f(y)dy\\
&=(2t)^{-\frac{1}{2}}e^{-\frac{i(\nu+1)\pi}{2}}e^{-\frac{x^{2}}{4it}}F_\nu(e^{-\frac{y^{2}}{4it}}f(y))(x/2t).
\end{aligned}
\end{equation}
\end{lemma}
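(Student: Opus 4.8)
The plan is to derive the integral representation of $e^{-itH_\nu}$ directly from the diagonalization in Lemma \ref{L1}, combined with the explicit action of the Hankel transform on Gaussians. Since $F_\nu H_\nu F_\nu^{-1}=Q^2$ and $F_\nu$ is a unitary involution, functional calculus gives
\begin{equation*}
e^{-itH_\nu}=F_\nu^{-1}e^{-itQ^2}F_\nu=F_\nu\,e^{-itQ^2}\,F_\nu,
\end{equation*}
where $e^{-itQ^2}$ is just multiplication by $e^{-it\xi^2}$. Thus for $f\in L^2(\mathbb R^+)$,
\begin{equation*}
(e^{-itH_\nu}f)(x)=\int_0^\infty\sqrt{x\xi}\,J_\nu(x\xi)\,e^{-it\xi^2}(F_\nu f)(\xi)\,d\xi
=\int_0^\infty\left(\int_0^\infty\sqrt{x\xi}\,J_\nu(x\xi)\,e^{-it\xi^2}\sqrt{\xi y}\,J_\nu(\xi y)\,d\xi\right)f(y)\,dy.
\end{equation*}
The kernel is therefore $K_t(x,y)=\int_0^\infty \xi\,\sqrt{xy}\,J_\nu(x\xi)J_\nu(\xi y)\,e^{-it\xi^2}\,d\xi$, and the whole lemma reduces to evaluating this oscillatory Bessel integral.

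The key step is Weber's second exponential integral (a classical formula in Watson's treatise), which states that for appropriate parameters
\begin{equation*}
\int_0^\infty e^{-p\xi^2}J_\nu(a\xi)J_\nu(b\xi)\,\xi\,d\xi=\frac{1}{2p}\exp\!\left(-\frac{a^2+b^2}{4p}\right)I_\nu\!\left(\frac{ab}{2p}\right),
\end{equation*}
valid for $\operatorname{Re}p>0$ and then extended to purely imaginary $p=it$ by an analytic-continuation/limiting argument (the oscillatory integral being interpreted as an improper or distributional limit, exactly as one does for the free Schrödinger kernel). Setting $p=it$, $a=x$, $b=y$, and using $I_\nu(z)=e^{-i\nu\pi/2}J_\nu(e^{i\pi/2}z)=e^{-i\nu\pi/2}J_\nu(iz)$ to rewrite $I_\nu\!\left(\frac{xy}{2it}\right)=e^{-i\nu\pi/2}J_\nu\!\left(\frac{xy}{2t}\right)$ together with $\frac{1}{2it}=\frac{1}{2it}$, one obtains
\begin{equation*}
K_t(x,y)=\frac{1}{2it}\sqrt{xy}\,J_\nu\!\left(\frac{xy}{2t}\right)e^{-\frac{x^2+y^2}{4it}}e^{-\frac{i\nu\pi}{2}},
\end{equation*}
which is precisely the first line of \eqref{L3.1}. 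The second line is then just algebraic repackaging: factor $e^{-\frac{x^2}{4it}}$ out front, write $\frac{1}{2it}=(2t)^{-1/2}e^{-i(\nu+1)\pi/2}\cdot\big[(2t)^{-1/2}e^{i(\nu-1)\pi/2}\big]$ — more carefully, pull the constant $\frac{1}{2it}e^{-i\nu\pi/2}$ apart as $(2t)^{-1/2}e^{-i(\nu+1)\pi/2}$ times $(2t)^{-1/2}$ — and recognize the remaining $y$-integral $\int_0^\infty\sqrt{(x/2t)y}\,J_\nu\big(\tfrac{xy}{2t}\big)e^{-\frac{y^2}{4it}}f(y)\,dy$ as $F_\nu\big(e^{-\frac{y^2}{4it}}f\big)(x/2t)$ after the substitution matching the definition \eqref{equation1.8}. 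The factor $e^{-i(\nu+1)\pi/2}$ emerges exactly from combining $e^{-i\nu\pi/2}$ with the branch of $\frac{1}{2it}=\frac{1}{2t}e^{-i\pi/2}$.

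The main obstacle is the rigorous justification of Weber's formula at $p=it$ and the interchange of the order of integration defining $K_t$: the Bessel integral converges only conditionally (indeed only in an improper/oscillatory sense) when $p$ is imaginary, so one should either (a) work with $p=\varepsilon+it$, $\varepsilon\downarrow 0$, and pass to the limit in $L^2$ using that $e^{-(\varepsilon+it)Q^2}F_\nu f\to e^{-itQ^2}F_\nu f$ in $L^2$ and that the corresponding kernels converge appropriately on a dense class (say $f\in C_0^\infty(\mathbb R^+)$), or (b) cite the computation in the proof of Theorem 2.4 of \cite{KT} as the excerpt suggests. I would first establish the identity for $f$ in a dense subclass where Fubini and the dominated-convergence passage $\varepsilon\downarrow0$ are unproblematic, then extend to all of $L^2(\mathbb R^+)$ by density and the unitarity of $e^{-itH_\nu}$. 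Everything else — the branch bookkeeping producing $e^{-i(\nu+1)\pi/2}$ and the recognition of the inner integral as a Hankel transform — is routine.
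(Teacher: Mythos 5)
Your proposal is correct and follows essentially the same route as the paper: diagonalize $e^{-itH_\nu}$ via $F_\nu$, evaluate the kernel with Weber's second exponential integral (the paper cites the equivalent Bateman formula $F_\nu(\sqrt{x}e^{-\beta x^{2}}J_\nu(ax))(y)=\frac{\sqrt{y}}{2\beta}e^{-\frac{a^{2}+y^{2}}{4\beta}}I_\nu(\frac{ay}{2\beta})$ for $\mathrm{Re}\,\beta>0$), regularize with $\beta=\varepsilon+it$ and pass to the limit $\varepsilon\downarrow0$ by dominated convergence for $f\in C_0^{\infty}(\mathbb{R}^{+})$, then use $I_\nu$--$J_\nu$ branch identities and density. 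The paper implements exactly your option (a), invoking the spectral theorem for $e^{-(\varepsilon+it)H_\nu}f\to e^{-itH_\nu}f$ and the uniform boundedness of $I_\nu\bigl(\tfrac{xy}{2(\varepsilon+it)}\bigr)e^{-\frac{x^{2}+y^{2}}{4(\varepsilon+it)}}$ on compact sets.
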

\begin{proof}
Let $f\in C_0^{\infty}(\mathbb{R}^{+})$. By the spectral theorem in \cite[Thm.3.1]{G}
\begin{equation}
(e^{-itH_\nu}f)(x)=\lim_{\varepsilon\rightarrow 0+}(e^{-(\varepsilon+it)H_\nu}f)(x).
\end{equation}
In view of (\ref{L1.1}), we thus get
\begin{equation}\label{L3.2}
\begin{split}
\lim_{\varepsilon\rightarrow 0+}(e^{-(\varepsilon+it)H_\nu}f)(x)&=\lim_{\varepsilon\rightarrow 0+}(F_\nu e^{-(\varepsilon+it)p^{2}}F_\nu^{-1}f)(x)\\
&=\lim_{\varepsilon\rightarrow 0+}\int_0^{\infty}\sqrt{xp}J_\nu(xp)e^{-(\varepsilon+it)p^{2}}F_\nu(f)(p)dp\\
&=\lim_{\varepsilon\rightarrow 0+}\int_0^{\infty}\sqrt{xp}J_\nu(xp)e^{-(\varepsilon+it)p^{2}}\int_0^{\infty}\sqrt{py}J_\nu(py)f(y)dydp\\
&=\lim_{\varepsilon\rightarrow 0+}\int_0^{\infty}\sqrt{x}\int_0^{\infty}p^{\frac{1}{2}}J_\nu(xp)e^{-(\varepsilon+it)p^{2}}\sqrt{yp}J_\nu(yp)dpf(y)dy\\
&=\lim_{\varepsilon\rightarrow 0+}\frac{1}{2(\varepsilon+it)}\int_0^{\infty}\sqrt{xy}I_\nu\left( \frac{xy}{2(\varepsilon+it)}\right) e^{-\frac{x^{2}+y^{2}}{4(\varepsilon+it)}}f(y)dy.
\end{split}
\end{equation}
The last equality follows from the following equation in \cite{A4},
\begin{equation}
F_\nu(\sqrt{x}e^{-\beta x^{2}}J_\nu(ax))(y)=\frac{\sqrt{y}}{2\beta}exp\left( -\frac{a^{2}+y^{2}}{4\beta}\right) I_\nu\left( \frac{ay}{2\beta}\right),~~for~Re \beta>0,~Re \nu>-1.
\end{equation}
Moreover, from \cite{AS}, it follows that the function $ I_\nu\left( \frac{xy}{2(\varepsilon+it)}\right) e^{-\frac{x^{2}+y^{2}}{4(\varepsilon+it)}}$ is bounded on every compact interval uniformly with respect to $\varepsilon>0$. Since the support of $f$ is compact, we can use the dominated convergence theorem and interchange the limit and integration in (\ref{L3.2}). Taking the limit $\varepsilon\rightarrow 0$ and using the identity $I_\nu(iz)=e^{-i\nu\pi/2}J_\nu(z)$, and then using the definition of Hankel transform $F_\nu$, we obtain (\ref{L3.1}).
\end{proof}
To proceed, we also need the following modified definition of Hankel transform $H_\nu$, which is defined by formula
\begin{equation*}
H_\nu(f)(x):=\int_{0}^{\infty}(xy)^{-\nu}J_\nu(xy)f(y)d\mu_\nu(y), \ x\in \mathbb{R}^{+},
\end{equation*}
where $d\mu_\nu(y)=y^{2\nu+1}dy$. It is well known that $H_\nu$ extends to an isometric isomorphism of $L^{2}(\mathbb{R}^{+}, x^{2\nu+1} dx)$ onto itself, with symmetric inverse: $H_\nu^{-1}=H_\nu$. It is easily verified by the definitions of $F_\nu$ and $H_\nu$ that
\begin{equation}\label{Sec2.1}
F_\nu(f)(x)=x^{\nu+\frac{1}{2}}H_\nu(y^{-\nu-\frac{1}{2}}f),\ \forall y\in \mathbb{R}^{+}.
\end{equation}

\section{Proofs of Theorems \ref{T1}\textendash Theorem \ref{T4}}
In this section, we will prove Theorem \ref{T1}\textendash\ref{T4}.
\subsection{Proof of Theorem \ref{T1}}
 This subsection is devoted to proving Theorem \ref{T1}. First, we start with introducing the uncertainty principle of the modified Hankel transform $H_{\nu}$. Second, by the relationship \eqref{Sec2.1}, then we get the uncertainty principle for Hankel transform $F_{\nu}$ in Lemma \ref{L4}. Then we show the equivalence between the uncertainty principle and the observability at two points in time in Lemma \ref{L5}. Finally, we give the proof of Theorem \ref{T1}. In addition, we also show in Corollary \ref{C3} the standard observability inequality \eqref{equation1.3} with the observable set $\Omega=\{x\in \mathbb{R}^{+}:\,x\geq r\}$.
\begin{theorem}\label{T2}
[Theorem 4.3 in \cite{GJ}] Let $A$, $B$ be a pair of measurable subsets of $\mathbb{R}^{+}$ with $0<|A|,|B|<\infty$. Then there is a positive constant $C=C(\nu,A,B)$ such that for each $f\in L^{2}_\nu(\mathbb{R}^{+})$,
\begin{equation}\label{UP}
\lVert f\rVert_{L^{2}_{\nu}(\mathbb{R}^{+})}\leq C(\lVert f\rVert_{L^{2}_{\nu}(A^{c})}+\lVert H_\nu(f)\rVert_{L^{2}_{\nu}(B^{c})}),
\end{equation}
where $A^{c}=\mathbb{R}^{+}\backslash A$, $|A|$ is the Lebesgue measure of set $A$, and $\lVert f\rVert_{L^{2}_{\nu}(\mathbb{R}^{+})}=(\int_{0}^{\infty}|f(x)|^{2}x^{2\nu+1}dx)^{\frac{1}{2}}$.
\end{theorem}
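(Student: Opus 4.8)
The inequality \eqref{UP} is a \emph{strong annihilating pair} estimate for the modified Hankel transform, and the natural line of attack is the operator-theoretic one of Amrein--Berthier. On $L^{2}_{\nu}(\mathbb{R}^{+})$ introduce the orthogonal projections $P_{A}f=\mathbf{1}_{A}f$ and $Q_{B}=H_{\nu}\mathbf{1}_{B}H_{\nu}$; since $H_{\nu}$ is a unitary involution of $L^{2}_{\nu}(\mathbb{R}^{+})$, $Q_{B}$ is indeed an orthogonal projection, with range $\{f:\operatorname{supp}H_{\nu}f\subseteq B\}$. Decomposing $f=P_{A^{c}}f+P_{A}Q_{B}f+P_{A}Q_{B^{c}}f$ and using $\|P_{A^{c}}f\|_{L^{2}_{\nu}}=\|f\|_{L^{2}_{\nu}(A^{c})}$, $\|Q_{B^{c}}f\|_{L^{2}_{\nu}}=\|\mathbf{1}_{B^{c}}H_{\nu}f\|_{L^{2}_{\nu}}=\|H_{\nu}f\|_{L^{2}_{\nu}(B^{c})}$ and $\|P_{A}\|_{\mathrm{op}}\le 1$, one obtains $(1-\|P_{A}Q_{B}\|_{\mathrm{op}})\,\|f\|_{L^{2}_{\nu}}\le\|f\|_{L^{2}_{\nu}(A^{c})}+\|H_{\nu}f\|_{L^{2}_{\nu}(B^{c})}$. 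Hence it suffices to prove the \emph{strict} bound $\|P_{A}Q_{B}\|_{\mathrm{op}}<1$, and then \eqref{UP} holds with $C=C(\nu,A,B)=(1-\|P_{A}Q_{B}\|_{\mathrm{op}})^{-1}$ (in fact this is an equivalence).

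To get $\|P_{A}Q_{B}\|_{\mathrm{op}}<1$ I would first show that $P_{A}Q_{B}$ is Hilbert--Schmidt on $L^{2}_{\nu}(\mathbb{R}^{+})$, hence compact, so that its operator norm (which is $\le 1$ as a product of two orthogonal projections) is attained. Writing $k(u,v)=(uv)^{-\nu}J_{\nu}(uv)$ for the symmetric kernel of $H_{\nu}$, the kernel of $P_{A}Q_{B}$ is $\mathbf{1}_{A}(x)\int_{B}k(x,z)k(z,y)\,d\mu_{\nu}(z)$, which for fixed $x$ equals $\mathbf{1}_{A}(x)\,H_{\nu}[\mathbf{1}_{B}k(x,\cdot)](y)$; applying the Plancherel identity for $H_{\nu}$ in the variable $y$, together with the elementary bound $t\,|J_{\nu}(t)|^{2}\le C_{\nu}$ on $(0,\infty)$ (a consequence of $J_{\nu}(t)=O(t^{\nu})$ as $t\to 0^{+}$ and $J_{\nu}(t)=O(t^{-1/2})$ as $t\to\infty$, valid for $\nu\ge 0$), one computes $\|P_{A}Q_{B}\|_{\mathrm{HS}}^{2}=\int_{A}\int_{B}xz\,|J_{\nu}(xz)|^{2}\,dx\,dz\le C_{\nu}\,|A|\,|B|<\infty$. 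Consequently, if $\|P_{A}Q_{B}\|_{\mathrm{op}}=1$ there is a nonzero $f$, which we may take in the range of $Q_{B}$, realizing $\|P_{A}Q_{B}f\|_{L^{2}_{\nu}}=\|f\|_{L^{2}_{\nu}}$; since $P_{A}$ is an orthogonal projection this forces $\operatorname{supp}f\subseteq A$ and $\operatorname{supp}H_{\nu}f\subseteq B$, i.e. a nonzero function concentrated on a finite-measure set whose Hankel transform is likewise concentrated on a finite-measure set.

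It then remains to rule this out, that is, to establish the \emph{weak} (qualitative) uncertainty principle for $H_{\nu}$: if $f\in L^{2}_{\nu}(\mathbb{R}^{+})$ has $\operatorname{supp}f$ and $\operatorname{supp}H_{\nu}f$ both of finite Lebesgue measure, then $f\equiv 0$. This is a Benedicks-type statement for the Hankel transform. The key analytic input is that $z\mapsto z^{-\nu}J_{\nu}(z)$ is an even entire function, so $H_{\nu}g$ is real-analytic on $(0,\infty)$ whenever $g$ has compact support and thus cannot vanish on a positive-measure set unless $g\equiv 0$; the passage from band-limited to merely finite-measure spectra can then be carried out by a Benedicks-type periodization argument adapted to the Hankel setting, with the zeros of $J_{\nu}$ playing the role of the integer lattice. (When $\nu=k/2$ for a nonnegative integer $k$, one may instead lift $f$ to a radial function on $\mathbb{R}^{2\nu+2}$ and invoke Benedicks' theorem for the Euclidean Fourier transform, after reducing to sets of finite $\mu_{\nu}$-measure.) Applying this to the $f$ produced in the previous step gives $f\equiv 0$, a contradiction, and hence $\|P_{A}Q_{B}\|_{\mathrm{op}}<1$, which proves \eqref{UP}.

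The operator-theoretic reduction of the first step and the Hilbert--Schmidt estimate of the second are routine once the standard Bessel asymptotics are in place; the genuinely delicate point — and the main obstacle — is the weak uncertainty principle for $H_{\nu}$ with supports of merely finite Lebesgue measure (neither compact nor of finite $\mu_{\nu}$-measure), together with its uniformity in the parameter $\nu\ge 0$. This is precisely where the work of \cite{GJ} is concentrated. If one is content with a slightly different, effective formulation, an alternative is to bypass the compactness argument altogether and prove a quantitative Nazarov-type inequality for $H_{\nu}$ directly, which yields \eqref{UP} with an explicit constant.
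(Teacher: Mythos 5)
The paper does not prove this statement at all: Theorem \ref{T2} is imported verbatim as Theorem 4.3 of \cite{GJ} (Ghobber--Jaming), and only its consequences (Remarks \ref{R1}--\ref{R2}, Lemma \ref{L4}) are developed here. So your proposal has to be judged as a proof of the cited result itself. Its first two steps are correct and are indeed the standard skeleton (also the one underlying \cite{GJ}): the Amrein--Berthier reduction showing that \eqref{UP} with $C=(1-\lVert P_{A}Q_{B}\rVert)^{-1}$ follows from $\lVert P_{A}Q_{B}\rVert_{\mathrm{op}}<1$, and the Hilbert--Schmidt computation $\lVert P_{A}Q_{B}\rVert_{HS}^{2}=\int_{A}\int_{B}xz|J_{\nu}(xz)|^{2}\,dz\,dx\leq k_{\nu}^{2}|A||B|$, which uses exactly the bound $|J_{\nu}(t)|\leq k_{\nu}t^{-1/2}$ that appears in Remark \ref{R2}; the compactness argument correctly reduces everything to the qualitative statement that no nonzero $f\in L^{2}_{\nu}$ can have $\operatorname{supp}f\subset A$ and $\operatorname{supp}H_{\nu}f\subset B$ with $|A|,|B|<\infty$.

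That last step, however, is where the entire difficulty of the theorem sits, and your proposal does not prove it. The analyticity remark only applies when one of the supports is compact (or of finite $\mu_{\nu}$-measure), not merely of finite Lebesgue measure, and the suggested fixes are not substantiated: a ``Benedicks-type periodization with the zeros of $J_{\nu}$ in place of the integer lattice'' is a nontrivial construction (there is no translation invariance, and Fourier--Bessel series do not periodize a function the way Fourier series do), and the parenthetical ``after reducing to sets of finite $\mu_{\nu}$-measure'' in the half-integer case is not available in general: a set of finite Lebesgue measure on $\mathbb{R}^{+}$ (e.g.\ $\bigcup_{n}[n,n+n^{-2\nu-2}]$) can have infinite $\mu_{\nu}$-measure, and one cannot truncate $f$ without destroying the support constraint on $H_{\nu}f$. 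This weak annihilation property is precisely the Benedicks--Amrein--Berthier-type theorem that Ghobber--Jaming establish before deducing their Theorem 4.3, and your text acknowledges as much. So, as a blind proof of Theorem \ref{T2}, the proposal is an accurate roadmap with a genuine gap at its core step; as a comparison with the paper, note simply that the paper sidesteps the issue by citation.
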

We give two remarks on the constant $C=C(\nu,A,B)$.
 \begin{remark}\label{R1}
 We point out that if $\nu=\frac{n}{2}-1$, $n=2,3,4 ...$, the constant has the following explicit form$:$
 \begin{equation*} C(\nu,S,\varSigma)=Ce^{C\mu_{\nu}(A)\mu_{\nu}(B)}. \end{equation*}
 In fact, recall the normalized Fourier transform
 is defined for $f\in L^{1}(\mathbb{R}^{n})$ by
 \begin{equation}
\mathcal{F}(f)(\xi)=\int_{\mathbb{R}^{n}}f(x)e^{-2\pi i\langle x,\xi\rangle}dx
 \end{equation}
and extended to $L^{2}(\mathbb{R}^{n})$ in the usual way. Thus, if $f(x)=g(|x|)$ is a radial function on $\mathbb{R}^{n}$, then $\mathcal{F}(f)(\xi)=H_{\frac{n}{2}-1}(g)(|\xi|)$. While for the Fourier transform, we already have \eqref{equation1.6} with constant $C(n,A_{n},B_{n})=C_{n}e^{C_{n}|A_{n}||B_{n}|}$. Now if we define $A$ and $B$ as
 \begin{equation*}
 A_{n}=\{x\in \mathbb{R}^{n}:|x|\in A\}~~ and~~ B_{n}=\{x\in \mathbb{R}^{n}:|x|\in B\},
 \end{equation*}
 then there exists a constant $C$ such that
 \begin{equation*}
 \lVert f\rVert_{L^{2}_{n/2-1}(\mathbb{R}^{+})}\leq Ce^{C\mu_{n/2-1}(A)\mu_{n/2-1}(B)}\left( \lVert f\rVert_{L^{2}_{n/2-1}(A^{c})}+\lVert H_{n/2-1}(f)\rVert_{L^{2}_{n/2-1}(B^{c})}\right).
 \end{equation*}
 \end{remark}
\begin{remark}\label{R2}
 Unfortunately, for the general case where $\nu\geq 0$, we can't have an explicit constant in Theorem \ref{T2} for general finite measure sets $A$ and $B$. But when the Lebesgue measure of $A$ and $B$ are small enough, precisely, by Lemma 4.2 in \cite{GJ}, if $|A||B|<k_\nu^{-2}$, the constant has the following explicit form:
\begin{equation*} C(\nu,A,B)=\left( 1+\frac{1}{1-k_\nu\sqrt{2\pi|A||B|}}\right) ,
\end{equation*}
where $k_\nu $ is the constant such that the following decay estimate is true:
\begin{equation}\label{R2.1}
\left| \frac{J_\nu(x)}{x^{\nu}}\right| \leq k_\nu x^{-\nu-\frac{1}{2}}.
\end{equation}
\end{remark}
We claim Theorem \ref{T2} is true for transform $F_\nu$, too.
\begin{lemma}\label{L4}
Let $A$, $B$ be a pair of measurable subsets of $\mathbb{R}^{+}$ with $0<|A|,|B|<\infty$. Then there is a positive constant $C=C(\nu,A,B)$ such that for each $f\in L^{2}(\mathbb{R}^{+})$,
\begin{equation}\label{L4.1}
\lVert f\rVert_{L^{2}(\mathbb{R}^{+})}\leq C\left( \lVert f\rVert_{L^{2}(A^{c})}+\lVert F_\nu(f)\rVert_{L^{2}(B^{c})}\right) ,
\end{equation}
where $A^{c}=\mathbb{R}^{+}\backslash A$, $|A|$ is the Lebesgue measure of set $A$.
\end{lemma}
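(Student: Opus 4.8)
The plan is to deduce Lemma \ref{L4} directly from Theorem \ref{T2} by transporting the inequality through the pointwise relation \eqref{Sec2.1} between the Hankel transform $F_\nu$ and the modified Hankel transform $H_\nu$. The key observation is that the multiplication operator $M:f\mapsto y^{-\nu-\frac{1}{2}}f$ is an isometry from $L^{2}(\mathbb{R}^{+})$ onto $L^{2}_{\nu}(\mathbb{R}^{+})$ that is, moreover, \emph{local}: for every measurable set $E\subset\mathbb{R}^{+}$,
\[
\lVert Mf\rVert_{L^{2}_{\nu}(E)}^{2}=\int_{E}\bigl|y^{-\nu-\frac{1}{2}}f(y)\bigr|^{2}y^{2\nu+1}\,dy=\int_{E}|f(y)|^{2}\,dy=\lVert f\rVert_{L^{2}(E)}^{2}.
\]

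First I would fix $f\in L^{2}(\mathbb{R}^{+})$ and set $g:=Mf=y^{-\nu-\frac{1}{2}}f\in L^{2}_{\nu}(\mathbb{R}^{+})$. By \eqref{Sec2.1} we have $F_\nu(f)(x)=x^{\nu+\frac{1}{2}}H_\nu(g)(x)$, hence $H_\nu(g)(x)=x^{-\nu-\frac{1}{2}}F_\nu(f)(x)=M\bigl(F_\nu(f)\bigr)(x)$. Applying the locality identity above with $E=\mathbb{R}^{+}$, $E=A^{c}$ and $E=B^{c}$ respectively yields
\[
\lVert g\rVert_{L^{2}_{\nu}(\mathbb{R}^{+})}=\lVert f\rVert_{L^{2}(\mathbb{R}^{+})},\qquad \lVert g\rVert_{L^{2}_{\nu}(A^{c})}=\lVert f\rVert_{L^{2}(A^{c})},\qquad \lVert H_\nu(g)\rVert_{L^{2}_{\nu}(B^{c})}=\lVert F_\nu(f)\rVert_{L^{2}(B^{c})}.
\]
Then I would invoke Theorem \ref{T2} for $g$, namely $\lVert g\rVert_{L^{2}_{\nu}(\mathbb{R}^{+})}\leq C(\nu,A,B)\bigl(\lVert g\rVert_{L^{2}_{\nu}(A^{c})}+\lVert H_\nu(g)\rVert_{L^{2}_{\nu}(B^{c})}\bigr)$, and substitute the three identities above to obtain \eqref{L4.1} with exactly the same constant $C(\nu,A,B)$. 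In particular Remarks \ref{R1} and \ref{R2} then transfer verbatim to $F_\nu$.

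There is essentially no genuine obstacle here; the only point that deserves a word of care is that both \eqref{Sec2.1} and the isometry property must be understood for general $f\in L^{2}(\mathbb{R}^{+})$ rather than only for, say, compactly supported smooth $f$. This follows by density: $F_\nu$ on $L^{2}(\mathbb{R}^{+})$ and $H_\nu$ on $L^{2}_{\nu}(\mathbb{R}^{+})$ are the $L^{2}$-continuous (unitary) extensions of their integral definitions, and $M$ is an isometric bijection intertwining the two Hilbert spaces, so the relation $H_\nu\circ M=M\circ F_\nu$, valid on a dense class, extends to all of $L^{2}(\mathbb{R}^{+})$.
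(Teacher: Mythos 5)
Your proposal is correct and coincides with the paper's own proof: the paper likewise conjugates by the unitary multiplication operator $M_\nu f(x)=x^{-\nu-\frac{1}{2}}f(x)$, applies Theorem \ref{T2} to $g=M_\nu f$, and uses \eqref{Sec2.1} together with the locality of the weight to identify the three norms, yielding \eqref{L4.1} with the same constant. No further comment is needed.
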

\begin{proof}
 It is easy to see that the operator $M_\nu f(x)=x^{-\nu-\frac{1}{2}}f(x)$ has the property that:\\
\begin{equation*}
M_\nu:L^{2}(\mathbb{R}^{+})\rightarrow L^{2}_{\nu}(\mathbb{R}^{+})~~~~~is~~unitary.
\end{equation*}
Then $\forall f(x)\in L^{2}(\mathbb{R}^{+})$, let $g(x)=M_\nu f(x)=x^{-\nu-\frac{1}{2}}f(x)\in L^{2}_{\nu}(\mathbb{R}^{+})$.\\
By (\ref{UP}), we have
\begin{equation}\label{L4.2}
	\lVert g\rVert_{L^{2}_{\nu}(\mathbb{R}^{+})}\leq C(\lVert g\rVert_{L^{2}_{\nu}(A^{c})}+\lVert F_\nu(g)\rVert_{L^{2}_{\nu}(B^{c})}),
\end{equation}
and notice that:
\begin{equation*}
\lVert g\rVert_{L^{2}_{\nu}}=\lVert f\rVert_{L^{2}(\mathbb{R}^{+})},~~
\lVert g\rVert_{L^{2}_{\nu}(A^{c})}=\lVert f\rVert_{L^{2}(A^{c})}.
\end{equation*}
By (\ref{Sec2.1}),
\begin{equation*}
\lVert H_\nu(g)\rVert_{L^{2}_{\nu}(B^{c})}=\lVert x^{-\nu-\frac{1}{2}}F_\nu(f)(x)\rVert_{L^{2}_{\nu}(B^{c})}=\lVert F_\nu(f)(x)\rVert_{L^{2}(B^{c})}.
\end{equation*}
Substitute all the terms in (\ref{L4.2}) by above equality, we get (\ref{L4.1}).
\end{proof}

\begin{lemma}\label{L5}
Let $A$ and $B$ be measurable subsets of $\mathbb{R}^{+}$. Then the following statements are equivalent:\\
(i) There exists a positive constant $C_1(\nu,A,B)$ such that for each $f\in L^{2}(\mathbb{R}^{+})$,
\begin{equation}\label{L5.1}
\int_{\mathbb{R}^{+}}|f(x)|^{2}dx\leq C_1(\nu,A,B)\left( \int_A |f(x)|^{2}dx+\int_B |F_\nu(f)(x)|^{2}dx\right) .
\end{equation}  
(ii) There exists a positive constant $C_2(\nu,A,B)$ such that for each $T>0$ and each $u_0\in L^{2}(\mathbb{R}^{+})$,
\begin{equation}\label{L5.2}
\int_{\mathbb{R}^{+}}|u_0(x)|^{2}dx\leq C_2(\nu,A,B)\left( \int_A |u_0 (x)|^{2}dx+\int_{2TB} |u(x,T;u_0)|^{2}dx\right) .
\end{equation}
Furthermore, when one of the above two statements holds, the constants $C_1(\nu,A,B)$ and $C_2(\nu,A,B)$ can be chosen to be the same.
\end{lemma}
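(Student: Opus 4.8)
The plan is to deduce the equivalence directly from the representation formula of Lemma~\ref{L3}, i.e.\ identity \eqref{equation1.7}, which relates the propagator $e^{-iTH_\nu}$ to a dilation of the Hankel transform $F_\nu$ conjugated by unimodular chirp factors. Concretely, fixing $T>0$ and writing $u(x,T;u_0)=(e^{-iTH_\nu}u_0)(x)$, identity \eqref{equation1.7} gives
\begin{equation*}
|u(x,T;u_0)|^2=\frac{1}{2T}\,\bigl|F_\nu(g)(x/2T)\bigr|^2,\qquad g(y):=e^{iy^2/(4T)}u_0(y),
\end{equation*}
because the prefactors $(2T)^{1/2}$, $e^{i(\nu+1)\pi/2}$ and $e^{-ix^2/(4T)}$ contribute only the displayed $(2T)^{-1}$ together with phases of modulus one.

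First I would record three elementary identities. Since $y\mapsto e^{iy^2/(4T)}$ is unimodular, the map $u_0\mapsto g$ is a bijective isometry of $L^2(\mathbb{R}^+)$ which also preserves the $L^2$ norm over any measurable subset; hence $\|u_0\|_{L^2(\mathbb{R}^+)}=\|g\|_{L^2(\mathbb{R}^+)}$ and $\|u_0\|_{L^2(A)}=\|g\|_{L^2(A)}$. Next, the change of variables $z=x/2T$, whose Jacobian $2T$ exactly cancels the $(2T)^{-1}$ above, yields $\int_{2TB}|u(x,T;u_0)|^2\,dx=\int_B|F_\nu(g)(z)|^2\,dz$. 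Substituting these into \eqref{L5.2} shows that, at the fixed level $T$, inequality \eqref{L5.2} for $u_0$ and inequality \eqref{L5.1} for $g$ are literally the same statement, with the same constant.

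From this the two implications follow immediately. For (i)$\Rightarrow$(ii): given $T>0$ and $u_0\in L^2(\mathbb{R}^+)$, apply \eqref{L5.1} to $g=e^{i(\cdot)^2/(4T)}u_0$ and transport it back through the three identities above to obtain \eqref{L5.2} with $C_2=C_1$. For (ii)$\Rightarrow$(i): given $f\in L^2(\mathbb{R}^+)$, pick any single $T>0$, set $u_0:=e^{-i(\cdot)^2/(4T)}f$ so that $g=f$, and apply \eqref{L5.2} to this $u_0$; this reads exactly as \eqref{L5.1} for $f$ with $C_1=C_2$. I do not expect a genuine obstacle here: the only care needed is in tracking the modulus-one chirp factors and the scaling Jacobian (which is precisely why the dilated set $2TB$, rather than $B$, appears on the right-hand side of \eqref{L5.2}), together with the mild asymmetry that the forward direction uses \eqref{L5.2} for every $T>0$ while the reverse direction needs it for only one value of $T$.
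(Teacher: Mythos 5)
Your proposal is correct and follows essentially the same route as the paper: the paper's proof simply defers to Lemma 2.3 in \cite{WWZ} with the Fourier identity replaced by the representation formula (\ref{L3.1}), which is precisely the chirp-conjugation and scaling argument you carry out explicitly. Your bookkeeping of the unimodular factors, the Jacobian producing the dilated set $2TB$, and the observation that the reverse implication needs only a single $T$ all match the intended argument.
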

\begin{proof}
The proof of this lemma is just the same as the Lemma 2.3 in \cite{WWZ}. We just mention that one  needs to replace formula (2.6) in \cite{WWZ} with formula (\ref{L3.1}) in this article.
\end{proof}
We are now in a position to prove Theorem \ref{T1}.
\begin{proof}
[\textbf{Proof of Theorem \ref{T1}.}]  Let $T>S\geq 0$. $A^{c}, B^{c}$ be two measurable sets in $\mathbb{R}^{+}$ with finite measure. By Lemma \ref{L4}, We have (\ref{L5.1}) with $(A,B)$ replaced by $\left( A,\frac{B}{2(T-S)}\right) $ and $C_1(\nu,A,B)$ replaced by $C\left( \nu,A^{c},\frac{B^{c}}{2(T-S)}\right) $, where  $C\left( \nu,A^{c},\frac{B^{c}}{2(T-S)}\right) $ is given in (\ref{L4.1}). Thus, we can apply Lemma \ref{L5} to get (\ref{L5.2}) with $(A,B)$ replaced by $\left( A,\frac{B}{2(T-S)}\right) $ and $C_2(\nu,A,B)$ replaced by $C\left( \nu,A^{c},\frac{B^{c}}{2(T-S)}\right) $. So we can have
\begin{equation}\label{T1.2}
\int_{\mathbb{R}^{+}}|u_0(x)|^{2}dx\leq C\left( \nu,A^{c},\frac{B^{c}}{2(T-S)}\right) \left( \int_{A}|u_0(x)|^{2}dx+\int_{B}|u(x,T-S;u_0)|^{2}dx\right) .
\end{equation}
Finally, by (\ref{T1.2}) we get
\begin{equation*}
\int_{\mathbb{R}^{+}}|u(x,S;u_0)|^{2}dx\leq C\left( \nu,A^{c},\frac{B^{c}}{2(T-S)}\right) \left( \int_{A}|u(x,S;u_0)|^{2}dx+\int_{B}|u(x,T;u_0)|^{2}dx\right) .
\end{equation*}
By the conservation law for the Schr\"{o}dinger equation, we get the inequality (\ref{T1.1}) in Theorem \ref{T1}.
\par The conclusions (i)-(ii) of the theorem are direct consequences of Lemma \ref{L4}, Lemma \ref{L5} combined with Remark \ref{R1} and Remark \ref{R2} respectively. For (ii), we only need to give an estimate of the constant $k_\nu$ appearing in inequality \eqref{R2.1}. In fact, it is well-known that the Bessel function has the following asymptotic behavior (see \cite[Appendix B.7]{GL}). When $\nu>1/2$, we have
\begin{equation*}
|J_{\nu}(x)|\leq 2\frac{(x/2)^{\nu}}{\varGamma(\nu+\frac{1}{2})\varGamma(\frac{1}{2})}2^{\nu-\frac{3}{2}}\left( \frac{\varGamma(2\nu)}{x^{2\nu}}+2^{\nu}\frac{\varGamma(\nu+\frac{1}{2})}{x^{\nu+\frac{1}{2}}}\right). 
\end{equation*}
When $1/2\geq \nu \geq 0$, we have
\begin{equation*}
|J_{\nu}(x)|\leq 2\frac{(x/2)^{\nu}}{x^{\nu+\frac{1}{2}}\varGamma(\frac{1}{2})}.
\end{equation*}
These estimates yield that for $\nu\geq0$ and $x\geq1$ we get
\begin{equation}\label{T1.P1}
|J_{\nu}(x)|\leq 2\left( \frac{\varGamma(2\nu)}{\varGamma(\nu+\frac{1}{2})}+2^{\nu}\right)  x^{-\frac{1}{2}}.
\end{equation}
For $\nu\geq0$ and $0\leq x<1$, by (\ref{L2.19}) below, we know that$$|J_{\nu}(x)|\leq x^{-\frac{1}{2}}.$$
This, along with (\ref{T1.P1}) indicates that we can take $k_\nu=2\left( \frac{\varGamma(2\nu)}{\varGamma(\nu+\frac{1}{2})}+2^{\nu}\right)$.
\par Finally, we prove the conclusion (iii) of the theorem. Let $U(t)=e^{-itH_{\nu}}$ be the unitary group given by the equation (\ref{equation1.1}). In other words, the solution of (\ref{equation1.1}) can be written as
\begin{equation*}
u(x,t)=U(t)u_{0}(x),~~x\in \mathbb{R}^{+},~t\in [0,\infty).
\end{equation*}
Let $\mathbbm{1}_{\leq a}$ and $\mathbbm{1}_{>a}$ be the characteristic functions of the sets $\{x\in\mathbb{R}^{+}:x\leq a\}$ and $\{x\in\mathbb{R}^{+}:x> a\}$, respectively. Then
\begin{equation}\label{T1(i).1}
\int_{\mathbb{R}^{+}} |u(x,T-S)|^{2}dx\leq 2\int_{\mathbb{R}^{+}}|U(T-S)(\mathbbm{1}_{\leq a}u_{0})|^{2}dx+2\int_{\mathbb{R}^{+}}|U(T-S)(\mathbbm{1}_{> a}u_{0})|^{2}dx.
\end{equation} 
For the first term, by Theorem \ref{T5} (the theorem will be proved later), we get 
\begin{equation}\label{T1(i).2}
\begin{split}
\int_{\mathbb{R}^{+}}|U(T-S)(\mathbbm{1}_{\leq a}u_{0})|^{2}dx
&\leq e^{C(1+\frac{ab}{T-S})}(\int_{[0,b]^{c}}|U(T-S)(\mathbbm{1}_{\leq a}u_{0})|^{2}dx)\\
&\leq 2e^{C(1+\frac{ab}{T-S})}\left( \int_{[0,b]^{c}}|u(x,T-S)|^{2}dx+\int_{[0,b]^{c}}|U(T-S)(\mathbbm{1}_{> a}u_{0})|^{2}dx\right),
\end{split}
\end{equation}
where $C=C(\nu)>0$ is an absolute constant.\\
Inserting (\ref{T1(i).2}) into (\ref{T1(i).1}), we have 
\begin{equation}\label{T1(i).3}
\int_{\mathbb{R}^{+}} |u(x,T-S)|^{2}dx\leq Ce^{C(1+\frac{ab}{T-S})}\left( \int_{[0,b]^{c}}|u(x,T-S)|^{2}dx+\int_{\mathbb{R}^{+}}|U(T-S)(\mathbbm{1}_{> a}u_{0})|^{2}dx\right) .
\end{equation}
By (\ref{T1(i).3}) and the conservation law, we have
\begin{equation*}
\int_{\mathbb{R}^{+}} |u(x,T)|^{2}dx\leq Ce^{C(1+\frac{ab}{T-S})}\left( \int_{[0,b]^{c}}|u(x,T)|^{2}dx+\int_{[0,a]^{c}}|u(x,S)|^{2}dx\right) .
\end{equation*}
Thus, the above leads to (\ref{T1.01}) and ends the proof of (iii).
\end{proof}
With the explicit constant in hand, we can get a sufficient condition on subset $\Omega$ such that the standard observability inequality \eqref{equation1.3} holds for $u(x,t)$ solving equation \eqref{equation1.1}, we take the case (iii) for example.
\begin{corollary}\label{C3}
Given $r, T>0$, there is $C=C(\nu)>0$ so that for all $u_{0}\in L^{2}(\mathbb{R}^{+})$,
\begin{equation}\label{C3.1}
\int_{\mathbb{R}^{+}} |u_{0}|^{2}dx\leq C(1+\frac{1}{T})e^{Cr^{2}(1+\frac{1}{T})}\int_{0}^{T}\int_{[0,r]^{c}}|u(x,t;u_{0})|^{2}dxdt.
\end{equation}
Thus, the exterior domain $[0,r]^{c}$ is an observable set at any time for (\ref{equation1.1}).
\end{corollary}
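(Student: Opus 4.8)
The plan is to deduce \eqref{C3.1} from the two-point observability estimate \eqref{T1.01} of Theorem~\ref{T1}(iii) (taken with $A=B=[0,r]$) by the classical device of integrating in the two time variables over two well-separated time windows; this parallels the passage from observation at two instants to observation on a time interval used in \cite{WWZ}.

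First I would fix $s\in[0,T/4]$ and $t\in[3T/4,T]$, so that $t-s\ge T/2$, and apply \eqref{T1.01} with $S=s$, $T=t$, $a=b=r$. Since $r^{2}/(t-s)\le 2r^{2}/T$, this yields
\begin{equation*}
\int_{\mathbb{R}^{+}}|u_0(x)|^{2}dx\le K\left(\int_{[0,r]^{c}}|u(x,s;u_0)|^{2}dx+\int_{[0,r]^{c}}|u(x,t;u_0)|^{2}dx\right),\qquad K:=Ce^{C(1+\frac{2r^{2}}{T})},
\end{equation*}
with $C=C(\nu)$ the constant from Theorem~\ref{T1}(iii).

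Next I would integrate this inequality in $(s,t)$ over the rectangle $[0,T/4]\times[3T/4,T]$, which has measure $T^{2}/16$. Because the first observation term is independent of $t$ and the second is independent of $s$, the integration produces a factor $T/4$ on each term, and since $[0,T/4]\cup[3T/4,T]\subset[0,T]$ one obtains
\begin{equation*}
\frac{T^{2}}{16}\int_{\mathbb{R}^{+}}|u_0|^{2}dx\le \frac{KT}{4}\int_{0}^{T}\int_{[0,r]^{c}}|u(x,t;u_0)|^{2}dx\,dt,
\end{equation*}
that is, $\displaystyle\int_{\mathbb{R}^{+}}|u_0|^{2}dx\le\tfrac{4K}{T}\int_{0}^{T}\int_{[0,r]^{c}}|u(x,t;u_0)|^{2}dx\,dt$.

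Finally I would recast $4K/T$ in the form claimed in \eqref{C3.1}: using $\tfrac{4}{T}\le 4(1+\tfrac1T)$ and $e^{C(1+\frac{2r^{2}}{T})}=e^{C}\,e^{\frac{2Cr^{2}}{T}}\le e^{C}e^{2Cr^{2}(1+\frac1T)}$, one arrives at \eqref{C3.1} with any $C'=C'(\nu)\ge\max\{4Ce^{C},\,2C\}$; the final assertion that $[0,r]^{c}$ is observable at any time is then immediate. I do not expect a genuine obstacle here, since Theorem~\ref{T1}(iii) already supplies the two-point estimate with an explicit constant. The only points requiring care are the choice of the two time windows — they must be separated by a fixed fraction of $T$ so that $r^{2}/(t-s)$ stays bounded by a multiple of $r^{2}/T$ — and the elementary bookkeeping needed to match the precise constant $C(1+\tfrac1T)e^{Cr^{2}(1+1/T)}$ in the statement.
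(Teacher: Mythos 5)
Your proposal is correct and follows essentially the same route as the paper: the paper likewise applies Theorem \ref{T1}(iii) with $a=b=r$, integrates the two-point inequality over two separated time windows (it uses $[0,T/3]\times[2T/3,T]$ so that $t-s\geq T/3$, rather than your $[0,T/4]\times[3T/4,T]$), and then absorbs $1/T$ and $r^2/T$ into the constant exactly as you do. The minor difference in the choice of windows and in the final constant bookkeeping is immaterial.
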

\begin{proof}
Fix $r,T>0$. According to (iii) of Theorem \ref{T1}(with $a=b=r$), there exists $C=C(\nu)$ so that when $0\leq s<t\leq T$,
\begin{equation}\label{C3.2}
\int_{\mathbb{R}^{+}}|u_0(x)|^{2}dx\leq Ce^{C(1+\frac{r^{2}}{t-s})}\left( \int_{[0,r]^{c}}|u(x,s;u_0)|^{2}dx+\int_{[0,r]^{c}}|u(x,t;u_0)|^{2}dx\right).
\end{equation}
Since $0\leq s<t\leq T$, and if $(s,t)\in[0,T/3]\times[2T/3,T]$, we have 
\begin{equation}\label{C3.3}
(t-s)\geq T/3.
\end{equation}
Integrating (\ref{C3.2}) with $s$ over $s\in [0,T/3]$ and $t$ over $t\in [2T/3,T]$, using (\ref{C3.3}), we obtain that\\
\\$\left(\frac{T}{3} \right)^{2}\int_{\mathbb{R}^{+}}|u_0(x)|^{2}dx$
\begin{equation*}
\begin{split}
&\leq C\int_{0}^{\frac{T}{3}}\int_{\frac{2T}{3}}^{T}e^{C(1+\frac{r^{2}}{T/3})}\left( \int_{[0,r]^{c}}|u(x,s;u_0)|^{2}dx+\int_{[0,r]^{c}}|u(x,t;u_0)|^{2}dx\right)dtds\\
&\leq \frac{CT}{3}e^{C(1+\frac{3r^{2}}{T})}\left( \int_{0}^{\frac{T}{3}} \int_{[0,r]^{c}}|u(x,s;u_0)|^{2}dxds+\int_{\frac{2T}{3}}^{T}\int_{[0,r]^{c}}|u(x,t;u_0)|^{2}dxdt\right)\\
&\leq \frac{CT}{3}e^{C(1+\frac{3r^{2}}{T})}\int_{0}^{T} \int_{[0,r]^{c}}|u(x,t;u_0)|^{2}dxdt.
\end{split}
\end{equation*}
From the above, we obtain 
\begin{equation*}
\int_{\mathbb{R}^{+}}|u_0(x)|^{2}dx\leq\frac{3C}{T}e^{C(1+\frac{3r^{2}}{T})}\int_{0}^{T} \int_{[0,r]^{c}}|u(x,t;u_0)|^{2}dxdt,
\end{equation*}
which leads to (\ref{C3.1}). This ends the proof of Corollary \ref{C3}.
\end{proof}

\subsection{Proofs of Theorems \ref{T3} and \ref{T4}}
The proofs of Theorems \ref{T3} and \ref{T4} are mainly based on an interpolation inequality (see Lemma \ref{L7} below). To proceed it, we need the following two lemmas. Lemma \ref{L6} below is a modified version of Theorem 1.3 in \cite{AE}.
\begin{lemma}\label{L6}
Let $f$ be analytic in $[a,b]$, $b>a\geq 0$, $E$ be a subinterval in $[a,b]$ and assume there are positive constants $M$ and $\rho$  such that
\begin{equation*}
|f^{k}(x)|\leq Mk!(\rho(b-a))^{-k},\,\,for\,\,k\geq 0,\,\,x\in[a,b],
\end{equation*}
then there are constants $N=N(\rho,\nu,\frac{|E|}{b-a})$ and $\gamma=\gamma(\rho,\nu,\frac{|E|}{b-a})$ such that
\begin{equation}\label{L6.1}
	\lVert f\rVert_{L^{\infty}([a,b])}\leq N\left( \mu_\nu(E)^{-1}\int_E |f|d\mu_\nu(x)\right) ^{\gamma} M^{1-\gamma},
\end{equation}
where measure $\mu_\nu(x)=x^{2\nu+1}dx$ and $|E|$ denotes the Lebesgue measure of $E$.
\end{lemma}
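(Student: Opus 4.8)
\textbf{Proof proposal for Lemma \ref{L6}.}

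The plan is to reduce the weighted statement \eqref{L6.1} to the unweighted analytic-interpolation result of \cite{AE} (its Theorem 1.3), which gives, under the same Taylor-coefficient bound, an estimate of the form
\begin{equation*}
\lVert f\rVert_{L^{\infty}([a,b])}\leq \tilde N\left(\frac{1}{|E|}\int_E |f|\,dx\right)^{\gamma} M^{1-\gamma},
\end{equation*}
with $\tilde N$ and $\gamma$ depending only on $\rho$ and the ratio $|E|/(b-a)$. The only gap between this and what we want is that the right-hand side of \eqref{L6.1} uses the weighted average $\mu_\nu(E)^{-1}\int_E |f|\,d\mu_\nu$ rather than the Lebesgue average $|E|^{-1}\int_E|f|\,dx$. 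So the whole content of the lemma is a two-sided comparison of these two averages on the subinterval $E\subset[a,b]$, after which the dependence of the constants is simply tracked.

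First I would dispose of the degenerate possibility $a=0$: since $x^{2\nu+1}$ is integrable near $0$ and bounded on $[0,b]$, the weight is comparable to a constant on any subinterval bounded away from nothing problematic, but to be safe one notes $\mu_\nu(E)=\int_E x^{2\nu+1}dx$ is finite and positive whenever $|E|>0$, so the weighted average is well defined. The key elementary step is the observation that on the interval $[a,b]$ the weight $w(x)=x^{2\nu+1}$ is monotone (increasing, since $\nu\geq 0$), hence for any measurable $E\subset[a,b]$,
\begin{equation*}
\inf_{[a,b]} w\ \cdot\ |E|\ \leq\ \mu_\nu(E)\ \leq\ \sup_{[a,b]} w\ \cdot\ |E|,
\end{equation*}
i.e. $a^{2\nu+1}|E|\leq\mu_\nu(E)\leq b^{2\nu+1}|E|$ (with the lower bound read as $0$ when $a=0$, in which case one instead uses that on $E$, which has positive measure, $\mu_\nu(E)\geq \int_{E\cap[|E|/2,\,b]}x^{2\nu+1}dx\gtrsim (|E|/2)^{2\nu+1}\cdot|E\cap[|E|/2,b]|$, or more cleanly restricts $E$ away from $0$ at the cost of a factor depending only on $|E|/(b-a)$). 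Consequently $\mu_\nu(E)^{-1}\int_E|f|\,d\mu_\nu$ and $|E|^{-1}\int_E|f|\,dx$ differ by a factor that is controlled, from above and below, by $(b/a)^{2\nu+1}$ or—after the harmless reduction keeping $E$ in, say, the right half of $[a,b]$—by a constant depending only on $\nu$ and $|E|/(b-a)$. Plugging this comparison into the unweighted \cite{AE} inequality and absorbing the extra factor (raised to the power $\gamma\in(0,1)$) into a new constant $N=N(\rho,\nu,|E|/(b-a))$ yields \eqref{L6.1} with the same exponent $\gamma=\gamma(\rho,\nu,|E|/(b-a))$.

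The only genuinely delicate point is the behavior at the left endpoint when $a=0$, where the lower comparison $\mu_\nu(E)\geq a^{2\nu+1}|E|$ is vacuous; the fix is standard—either split $E=E\cap[0,|E|/2]\cup E\cap[|E|/2,b]$ and note the second piece carries at least half the Lebesgue measure and sits where $w\gtrsim(|E|/2)^{2\nu+1}$, or invoke Theorem 1.3 of \cite{AE} on the subinterval $[|E|/2,b]$ after checking the Taylor bound there is inherited with the constant $\rho$ replaced by a fixed multiple of it. Beyond that, there is nothing to do except bookkeeping of constants, so I expect the write-up to be short.
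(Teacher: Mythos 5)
Your proposal is essentially correct, and it reaches the lemma by a route that differs from the paper's in its second half. The paper rescales $[a,b]$ to $[0,1]$, invokes the Apraiz--Escauriaza estimate in its $L^{\infty}(E)$ form, and then runs a Chebyshev-type argument with respect to $d\mu_\nu$: it sets $E_1=\{x\in E:\ |f(x)|\le 2\,\mu_\nu(E)^{-1}\int_E|f|\,d\mu_\nu\}$, shows $\mu_\nu(E_1)\ge\mu_\nu(E)/2$, and uses the monotonicity of $x^{2\nu+1}$ on the interval $E$ to convert this into the Lebesgue bound $|E_1|\ge|E|/(4(\nu+1))$, after which a second application of the same analytic-interpolation lemma on the measurable set $E_1$ finishes. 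You instead keep the $L^{1}$-average form of the Apraiz--Escauriaza estimate and compare the Lebesgue average on (a large piece of) $E$ with the $\mu_\nu$-average on $E$; the monotone weight plays the same role in both arguments. Both routes are legitimate and of comparable length; the paper's level-set device has the mild advantage that the factor $4(\nu+1)$ appears automatically, with no discussion of where $E$ sits relative to the origin.

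One caveat on your write-up, since the whole point of the lemma is the form of the constant. The comparison factor $(b/a)^{2\nu+1}$ is \emph{not} admissible: it is not a function of $(\rho,\nu,|E|/(b-a))$ and blows up already when $a>0$ is merely small compared with $b-a$ (e.g.\ $[a,b]=[\varepsilon,1+\varepsilon]$ with $\varepsilon\to0$), not only when $a=0$; likewise the reduction ``keep $E$ in the right half of $[a,b]$'' is unavailable when $E$ lies in the left half. So your trimming device $E'=E\cap[|E|/2,\,b]$ is not an optional endpoint patch but the actual argument, to be applied whenever $\inf E$ is small relative to $|E|$ (or simply always). It does close the matter, with one line of bookkeeping you left implicit: $|E'|\ge|E|/2$, $\inf_{E'}x^{2\nu+1}\ge\max\{|E|/2,\inf E\}^{2\nu+1}$, and, $E$ being an interval, $\sup E\le\inf E+|E|\le 3\max\{|E|/2,\inf E\}$, whence
\begin{equation*}
\mu_\nu(E)\le(\sup E)^{2\nu+1}|E|\le 2\cdot 3^{2\nu+1}\,|E'|\inf_{E'}x^{2\nu+1},
\qquad
\frac{1}{|E'|}\int_{E'}|f|\,dx\le \frac{2\cdot 3^{2\nu+1}}{\mu_\nu(E)}\int_{E}|f|\,d\mu_\nu .
\end{equation*}
Feeding this into the unweighted estimate applied to $E'$ (whose density ratio is at least $\tfrac12|E|/(b-a)$) gives \eqref{L6.1} with constants depending only on $\rho$, $\nu$ and $|E|/(b-a)$, as required.
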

\begin{proof}
Define linear transform $Ax=(b-a)x+a$. Let $G(x)=f\circ A(x)$, $E_0=A^{-1}E$, then $G(x)$ is analytic in $[0,1]$, $|E_0|=\frac{|E|}{b-a}$. Then by Lemma 3.3 in \cite{AE}, there are $N=N(\rho,\frac{|E|}{b-a})$, and $\gamma=\gamma(\rho,\frac{|E|}{b-a})$ such that
\begin{equation}\label{L6.2}
	\lVert f\rVert_{L^{\infty}([a,b])}\leq N(\lVert f\rVert_{L^{\infty}(E)})^{\gamma} M^{1-\gamma}.
\end{equation}
Define 
\begin{equation*}
E_1=\{x\in E:\frac{|f(x)|}{2}\leq \mu_\nu(E)^{-1}\int_E |f|d\mu_\nu(x)\},
\end{equation*}
then we have
\begin{equation}\label{L6.3}
|E_1|\geq \frac{|E|}{4(\nu+1)},\,\,\lVert f\rVert_{L^{\infty}(E_1)}\leq 2\mu_\nu(E)^{-1}\int_E |f|d\mu_\nu(x).
\end{equation}
In fact,
\begin{equation*}
\int_{E\backslash E_1} |f|d\mu_\nu(x)\geq \int_{E\backslash E_1}\left( \frac{2}{\mu_\nu(E)}\int_E |f|d\mu_\nu(x)\right) d\mu_\nu(x)=\frac{2\mu_\nu(E\backslash E_1)}{\mu_\nu(E)}\int_E |f|d\mu_\nu(x). 
\end{equation*}
Then we have $\mu_\nu(E\backslash E_1)\leq \frac{\mu_\nu(E)}{2}$. It implies that 
$\mu_\nu(E_1)\geq \frac{\mu_\nu(E)}{2}$. Since $E\subset[a,b]$ is an interval, denote $E=[a_1,b_1]$, we thus get
\begin{equation}\label{eL}
\int_{E_1}x^{2\nu+1}dx\geq \frac{1}{2}\int_{[a_1,b_1]}x^{2\nu+1}dx.
\end{equation}
Since $x^{2\nu+1}$ is monotonically increasing with respect to $x$ and $E_1\subset[a_1,b_1]$, so the left hand of \eqref{eL} is controlled by $\int_{E_1}b_1^{2\nu+1}dt$, i.e.
\begin{equation}\label{eL1} 
	\int_{E_1}b_1^{2\nu+1}dx\geq \int_{E_1}x^{2\nu+1}dx.
\end{equation}
The right hand of \eqref{eL} is equal to
\begin{equation}\label{eL2}
	 \frac{b_1^{2\nu+2}-a_1^{2\nu+2}}{4(\nu+1)}.
\end{equation}
So by \eqref{eL1} and \eqref{eL2}, we have
\begin{equation*}
\int_{E_1}dx\geq \frac{1}{4(\nu+1)}\frac{b_1^{2\nu+2}-a_1^{2\nu+2}}{b_1^{2\nu+1}}.
\end{equation*}
By 
$$\frac{a_1^{2\nu+2}}{b_1^{2\nu+1}}\leq \frac{a_1^{2\nu+2}}{a_1^{2\nu+1}},$$
we have
\begin{equation*}\int_{E_1}dx\geq \frac{1}{4(\nu+1)}(b_1-a_1)
	\Leftrightarrow |E_1|\geq \frac{1}{4(\nu+1)}|E|.
\end{equation*}
We thus prove \eqref{L6.3}, finally, we use (\ref{L6.2}) for $E_1$ and then by (\ref{L6.3}) we get the assertion of the lemma and formula (\ref{L6.1}).
\end{proof}
\begin{remark}
For fixed $\nu$ and an interval $E=[a,b]\subset\mathbb{R}^{+}$, we make a note on the Lebesgue measure of $E$ and the $\mu_\nu$-measure of $E$. By inequality $a^{p}+b^{p}\leq 2(a+b)^{p},\,\,p\geq0$, we always have
$|E|^{2(\nu+1)}\leq 4(\nu+1)\mu_\nu(E)$, but the right side of the inequality may not necessarily be controlled by the left side except $b\geq a>0$. In fact, in case $b\geq a>0$, we have inequality $(a+b)^{p}\leq2^{p}(a^{p}+b^{p})\leq(2^{p+1}-1)b^{p}+a^{p}$, $p\geq1$, which yields 
\begin{equation*}
\frac{b^{2(\nu+1)}-a^{2(\nu+1)}}{2(\nu+1)}\leq\frac{(2^{2\nu+3}-1)(b-a)^{2(\nu+1)}}{2(\nu+1)},
\end{equation*}
that is,
\begin{equation*}
\mu_\nu(E)\leq\frac{(2^{2\nu+3}-1)|E|^{2(\nu+1)}}{2(\nu+1)}.
\end{equation*}
\end{remark}
\begin{lemma}\label{L}
For every $\nu\geq0$ and every positive integer $k\in \mathbb{N}^{+}$, we have estimate
\begin{equation}\label{L2.19}
\left| \frac{\partial^k}{\partial x^{k}}\left( \frac{J_\nu(xy)}{(xy)^{\nu}}\right) \right| \leq y^{k},\,\,\,\,\,\,\,\,\,\,\,\,x,\,\,y\in \mathbb{R}^{+}.
\end{equation}
\end{lemma}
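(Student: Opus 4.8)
The plan is to pass to the Poisson (integral) representation of the Bessel function, valid for $\nu>-\tfrac12$ and hence for all $\nu\ge0$, namely
\[
J_\nu(z)=\frac{(z/2)^\nu}{\sqrt{\pi}\,\Gamma(\nu+\tfrac12)}\int_{-1}^{1}(1-t^2)^{\nu-\frac12}\,e^{izt}\,dt .
\]
Setting $z=xy$ and dividing by $(xy)^\nu$ cancels the singular power and yields, for $x,y\in\mathbb{R}^+$,
\[
\frac{J_\nu(xy)}{(xy)^\nu}=\frac{1}{2^\nu\sqrt{\pi}\,\Gamma(\nu+\tfrac12)}\int_{-1}^{1}(1-t^2)^{\nu-\frac12}\,e^{ixyt}\,dt .
\]
Since $\nu-\tfrac12>-1$, the weight $(1-t^2)^{\nu-\frac12}$ is integrable on $(-1,1)$; differentiating $k$ times in $x$ produces the factor $(iyt)^k$, and the resulting integrand is dominated by $y^k\,(1-t^2)^{\nu-\frac12}$ uniformly in $x$, so differentiation under the integral sign is justified and
\[
\frac{\partial^k}{\partial x^k}\!\left(\frac{J_\nu(xy)}{(xy)^\nu}\right)=\frac{1}{2^\nu\sqrt{\pi}\,\Gamma(\nu+\tfrac12)}\int_{-1}^{1}(1-t^2)^{\nu-\frac12}\,(iyt)^k e^{ixyt}\,dt .
\]

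Next I would estimate the last integral pointwise. On $[-1,1]$ one has $|t|\le1$ and $|e^{ixyt}|=1$, so $|(iyt)^k e^{ixyt}|\le y^k$, which gives
\[
\left|\frac{\partial^k}{\partial x^k}\!\left(\frac{J_\nu(xy)}{(xy)^\nu}\right)\right|\le\frac{y^k}{2^\nu\sqrt{\pi}\,\Gamma(\nu+\tfrac12)}\int_{-1}^{1}(1-t^2)^{\nu-\frac12}\,dt .
\]
The remaining integral is a Beta integral, $\int_{-1}^{1}(1-t^2)^{\nu-\frac12}\,dt=B(\tfrac12,\nu+\tfrac12)=\sqrt{\pi}\,\Gamma(\nu+\tfrac12)/\Gamma(\nu+1)$, and therefore
\[
\left|\frac{\partial^k}{\partial x^k}\!\left(\frac{J_\nu(xy)}{(xy)^\nu}\right)\right|\le\frac{y^k}{2^\nu\,\Gamma(\nu+1)} .
\]
Thus the lemma reduces to the elementary inequality $2^\nu\Gamma(\nu+1)\ge1$ for all $\nu\ge0$.

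This last point is where a little care is needed, since $2^0\Gamma(1)=1$, so the constant is sharp at $\nu=0$ and the bound $y^k$ cannot be improved. Put $\varphi(\nu)=\nu\log2+\log\Gamma(\nu+1)$; then $\varphi(0)=0$ and $\varphi'(\nu)=\log2+\psi(\nu+1)$, where $\psi$ is the digamma function. Since $\psi$ is increasing and $\psi(1)=-\gamma$ (Euler's constant), $\varphi'(\nu)\ge\log2-\gamma>0$ for $\nu\ge0$, so $\varphi$ is nondecreasing, hence $\varphi(\nu)\ge0$, i.e. $2^\nu\Gamma(\nu+1)\ge1$. This establishes \eqref{L2.19}; the same computation with $k=0$ also recovers the bound $|J_\nu(z)|\le z^\nu$ used earlier. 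The only genuine obstacle in the argument is this monotonicity check on $2^\nu\Gamma(\nu+1)$ — everything else is a routine application of the Poisson representation together with a Beta-function evaluation.
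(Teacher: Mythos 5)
Your proposal is correct, and it follows the same basic route as the paper: both proofs start from the Poisson representation of $J_\nu$, differentiate $k$ times under the integral to pull out the factor $(iyt)^k$, and then estimate the resulting integral. The difference lies in the endgame, and it matters. The paper throws away the factor $(1-s^2)^{\nu}$ (bounding it by $1$), is left with $\int_{-1}^{1}\frac{ds}{\sqrt{1-s^2}}=\pi$, and then asserts $\frac{\pi y^k}{2^\nu\Gamma(\nu+\frac12)\Gamma(\frac12)}\le\frac{\pi y^k}{2^\nu\Gamma(\frac12)^2}=\frac{y^k}{2^\nu}$; this step uses $\Gamma(\nu+\frac12)\ge\Gamma(\frac12)$, which fails for small $\nu$ (e.g.\ $\Gamma(\frac32)=\frac{\sqrt{\pi}}{2}<\sqrt{\pi}$), and indeed the cruder bound $\frac{\sqrt{\pi}\,y^k}{2^\nu\Gamma(\nu+\frac12)}$ exceeds $y^k$ at $\nu=\frac12$, so the paper's shortcut does not by itself give the stated estimate. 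You instead keep the full weight $(1-t^2)^{\nu-\frac12}$, evaluate it exactly as the Beta integral $B(\frac12,\nu+\frac12)$, and reduce the lemma to $2^\nu\Gamma(\nu+1)\ge1$, which you verify via the monotonicity of $\varphi(\nu)=\nu\log 2+\log\Gamma(\nu+1)$ using $\varphi'(\nu)=\log2+\psi(\nu+1)\ge\log2-\gamma>0$. That computation is correct (the differentiation under the integral is justified as you say), it yields the sharper constant $\frac{y^k}{2^\nu\Gamma(\nu+1)}$, and it in fact repairs the weak point in the paper's own argument; it also recovers $|J_\nu(z)|\le z^\nu$ at $k=0$, consistent with the bound $|J_\nu(x)|\le x^{-1/2}$ for $0\le x<1$ invoked elsewhere in the paper.
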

\begin{proof}
By the Poisson representation formula, we have
\begin{equation*}
\frac{J_\nu(xy)}{(xy)^\nu}=\frac{1}{2^\nu\Gamma(\nu+\frac{1}{2})\Gamma(\frac{1}{2})}\int_{-1}^{+1}e^{ixys}(1-s^2)^{\nu}\frac{ds}{\sqrt{1-s^2}},
\end{equation*}
then for every positive integer $k\in \mathbb{N}^{+}$, we get
\begin{equation*}
\begin{aligned}
\left| \frac{\partial^k}{\partial x^{k}}\left( \frac{J_\nu(xy)}{(xy)^{\nu}}\right) \right| &=\left| \frac{(y)^{k}}{2^\nu\Gamma(\nu+\frac{1}{2})\Gamma(\frac{1}{2})}\int_{-1}^{+1}(is)^{k}e^{ixys}(1-s^2)^{\nu}\frac{ds}{\sqrt{1-s^2}}\right| \\
&\leq\frac{y^{k}}{2^\nu\Gamma(\nu+\frac{1}{2})\Gamma(\frac{1}{2})}\left| \int_{-1}^{+1}\frac{ds}{\sqrt{1-s^2}}\right| =\frac{\pi y^{k}}{2^\nu\Gamma(\nu+\frac{1}{2})\Gamma(\frac{1}{2})}\\
&\leq\frac{\pi y^{k}}{2^\nu\Gamma(\frac{1}{2})\Gamma(\frac{1}{2})}=\frac{ y^{k}}{2^\nu}\leq y^{k},\,\,for\,\,every\,\,x,\,y\in \mathbb{R}^{+},\,and\,\,\nu\geq 0.
\end{aligned}
\end{equation*}
\end{proof}
Next, we show an interpolation inequality for a class of $L^{2}$-functions whose Hankel transform have compact supports.
\begin{lemma}\label{L7}
Given any interval $A=[a_{1}, a_{2}]$, $B=[b_{1},b_{2}]$, $a=a_{2}-a_{1}>0$, $b=b_{2}-b_{1}>0$  and any constant $\lambda>0$. Then for each $f\in L^{2}(\mathbb{R}^{+})$ with $F_\nu(f)\in C_0^{\infty}(\mathbb{R}^{+})$, there exist absolute constants $C=C(\nu)>0$ and $\theta=\theta(\nu)\in(0,1)$ such that 
\begin{equation}\label{L7.1}
\begin{split}
\int_A|f(x)|^{2}dx&\leq C(a_{2}^{2(\nu+1)}-a_{1}^{2(\nu+1)})(\lambda^{-2(\nu+1)}+b^{-2(\nu+1)})\\ 
&~~~~~~~\times\left( \int_B|f(x)|^{2}dx\right) ^{\theta^{p}}\left( \int_{\mathbb{R}^{+}}|F_\nu(f)(y)|^{2}e^{\lambda y}dy\right) ^{1-\theta^{p}},
\end{split}
\end{equation}
where $p:=1+\frac{|x_0-x_1|+\frac{a}{2}+\frac{b}{2}}{\lambda\wedge \frac{b}{2}}$,  $x_0$, $x_1$ be the center of $A$ and $B$ respectively.
\end{lemma}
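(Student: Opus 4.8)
The plan is to exploit that a function whose Hankel transform is compactly supported becomes, after removing the factor $x^{\nu+\frac12}$, a genuinely analytic function whose derivatives are controlled by the weighted quantity $\big(\int_{\mathbb{R}^{+}}|F_\nu(f)(y)|^{2}e^{\lambda y}\,dy\big)^{1/2}$, and then to run a chained propagation-of-smallness argument based on Lemma~\ref{L6}. Concretely, set $g:=F_\nu(f)$ and use that $F_\nu$ is an involution (Lemma~\ref{L1}) to write
\begin{equation*}
f(x)=F_\nu(g)(x)=x^{\nu+\frac12}\int_{0}^{\infty}y^{\nu+\frac12}\,\frac{J_\nu(xy)}{(xy)^{\nu}}\,g(y)\,dy=:x^{\nu+\frac12}\,h(x).
\end{equation*}
Since $g$ has compact support we may differentiate under the integral, apply Lemma~\ref{L}, and use the Cauchy--Schwarz inequality, splitting $y^{\nu+\frac12+k}|g(y)|=\big(y^{\nu+\frac12+k}e^{-\lambda y/2}\big)\big(|g(y)|e^{\lambda y/2}\big)$, to get $|h^{(k)}(x)|\le\lambda^{-\nu-1-k}\sqrt{\Gamma(2\nu+2+2k)}\,\|g\|_{\lambda}$ with $\|g\|_{\lambda}^{2}:=\int_{\mathbb{R}^{+}}|F_\nu(f)|^{2}e^{\lambda y}\,dy$. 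The Legendre duplication formula together with $\Gamma(k+\nu+1)\le C_\nu 2^{k}k!$ gives $\Gamma(2\nu+2+2k)\le C_\nu(4^{k}k!)^{2}$, so, uniformly in $x\in\mathbb{R}^{+}$ and $k\ge0$,
\begin{equation*}
|h^{(k)}(x)|\le M_0\,k!\,\rho_0^{-k},\qquad M_0:=\frac{C(\nu)}{\lambda^{\nu+1}}\|g\|_{\lambda},\quad\rho_0:=\frac{\lambda}{4},
\end{equation*}
and in particular $\sup_{\mathbb{R}^{+}}|h|\le M_0$.

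The second step propagates smallness of $h$ from $B$ to $A$. Put $r:=\lambda\wedge\frac b2$, fix a universal $K\ge2$ with $\ell:=r/K\le\rho_0$, and build a chain of intervals $I_0,\dots,I_n$, each of length comparable to $\ell$, with consecutive overlaps of length comparable to $\ell$, such that $I_0\subset B$ is centered at the center $x_1$ of $B$ and $A\subset\bigcup_{j}I_j$; the chain can be arranged so that $n\le C(\nu)\,p$. On each $I_j$ the previous estimate reads $|h^{(k)}|\le M_0\,k!\,(\rho|I_j|)^{-k}$ with $\rho:=\rho_0/|I_j|$ bounded below by a universal constant (by the choice of $K$), while $|I_{j-1}\cap I_j|/|I_j|$ is also bounded below; hence Lemma~\ref{L6} applies at every step with parameters depending only on $\nu$, producing a fixed exponent $\gamma_0=\gamma_0(\nu)\in(0,1)$. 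Iterating from $I_0$ to $I_n$ --- at the $j$-th step bounding $\mu_\nu(E)^{-1}\int_E|h|\,d\mu_\nu\le\|h\|_{L^\infty(I_{j-1})}$ on the overlap $E=I_{j-1}\cap I_j$, and starting from $E=I_0\subset B$ --- and summing the geometric series of the multiplicative constants, one obtains
\begin{equation*}
\|h\|_{L^\infty(A)}\le\|h\|_{L^\infty(I_n)}\le N(\nu)\Big(\mu_\nu(I_0)^{-1}\!\int_{I_0}|h|\,d\mu_\nu\Big)^{\kappa}M_0^{\,1-\kappa},\qquad\kappa:=\gamma_0^{\,n+1}.
\end{equation*}
Since $\mu_\nu(I_0)^{-1}\int_{I_0}|h|\,d\mu_\nu\le M_0$ and $\kappa\ge\gamma_0^{C(\nu)p}=:\theta^{p}$ with $\theta:=\gamma_0^{C(\nu)}\in(0,1)$, replacing $\kappa$ by $\theta^{p}$ only enlarges the right-hand side.

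The third step returns to $L^{2}$. First, $\int_A|f|^{2}\,dx=\int_A x^{2\nu+1}|h|^{2}\,dx\le(2\nu+2)^{-1}\big(a_2^{2(\nu+1)}-a_1^{2(\nu+1)}\big)\|h\|_{L^\infty(A)}^{2}$, which is where the factor $a_2^{2(\nu+1)}-a_1^{2(\nu+1)}$ is born. Next, since $I_0\subset B$, we have $\int_{I_0}|h|\,d\mu_\nu=\int_{I_0}x^{\nu+\frac12}|f|\,dx\le\big(\sup_{I_0}x^{\nu+\frac12}\big)|I_0|^{1/2}\big(\int_B|f|^{2}\,dx\big)^{1/2}$, and the lower bound $\mu_\nu(I_0)\ge c_\nu\,r\,(b_1+b)^{2\nu+1}$ --- valid because $I_0$ has length comparable to $r$ and, being centered at $x_1$, has left endpoint comparable to $b_1+b$ --- yields $\mu_\nu(I_0)^{-1}\big(\sup_{I_0}x^{\nu+\frac12}\big)|I_0|^{1/2}\le C(\nu)\,r^{-(\nu+1)}$. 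Inserting these two facts into the (squared) display of Step~2, using $\kappa\ge\theta^{p}$ to lower the exponent, noting that the remaining powers of $M_0$ contribute exactly $\big(\lambda^{-2(\nu+1)}\big)^{1-\theta^{p}}\big(\int_{\mathbb{R}^{+}}|F_\nu(f)|^{2}e^{\lambda y}\,dy\big)^{1-\theta^{p}}$, and finally using $r\le\lambda$ together with $r^{-1}=\max\{\lambda^{-1},2b^{-1}\}\le\lambda^{-1}+2b^{-1}$ so that $r^{-2(\nu+1)\theta^{p}}\lambda^{-2(\nu+1)(1-\theta^{p})}\le r^{-2(\nu+1)}\le C(\nu)\big(\lambda^{-2(\nu+1)}+b^{-2(\nu+1)}\big)$, one finds that the full prefactor is at most $C(\nu)\big(a_2^{2(\nu+1)}-a_1^{2(\nu+1)}\big)\big(\lambda^{-2(\nu+1)}+b^{-2(\nu+1)}\big)$; this is \eqref{L7.1}.

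The step I expect to be the main obstacle is the second one: organizing the chain so that $n\le C(\nu)p$ while keeping every invocation of Lemma~\ref{L6} at uniformly bounded parameters. It is precisely this constraint that forces $\ell$ to be comparable to $\lambda\wedge\frac b2$ and is therefore the origin of the quantity $\lambda\wedge\frac b2$ in the exponent $p$; it also entails the delicate constant bookkeeping of Step~3, notably controlling the degeneracy of the weight $x^{2\nu+1}$ near $x=0$, which is what ultimately produces the term $b^{-2(\nu+1)}$ rather than a pure power of $\lambda$.
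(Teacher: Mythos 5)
Your argument is correct and yields the stated estimate with the right dependence on all parameters, but the way you propagate smallness from $B$ to $A$ is genuinely different from the paper's. Both proofs share the same skeleton: write $f(x)=x^{\nu+1/2}h(x)$ with $h(x)=\int_0^\infty y^{\nu+1/2}\frac{J_\nu(xy)}{(xy)^\nu}F_\nu(f)(y)\,dy$, derive $|h^{(k)}|\le M\,k!\,\rho^{-k}$ from Lemma \ref{L} and Cauchy--Schwarz against an exponential weight, and at the end convert $\|h\|_{L^\infty(A)}$ into $\int_A|f|^2dx$ through $\mu_\nu(A)$, which is where $a_2^{2(\nu+1)}-a_1^{2(\nu+1)}$ appears in both. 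The paper then applies Lemma \ref{L6} only once (near the center $x_1$ of $B$), normalizes $G=g(x_1+\cdot)/M$, extends it holomorphically to a strip of width comparable to $r_0$, and chains smallness through the disks $D_{r_0}(kr_0)$ via Lemma 3.2 of \cite{AE} and the Hadamard three-circle theorem, getting an exponent of the form $\theta\theta_1 2^{-K}$; moreover it proves the case $\lambda=1$ first and recovers general $\lambda$ by the scaling $f\mapsto\lambda^{1/2}f(\lambda\cdot)$. You instead stay entirely on the real line, iterating Lemma \ref{L6} along a chain of $n\le C(\nu)p$ overlapping intervals of length comparable to $\lambda\wedge\frac{b}{2}$, which yields the exponent $\gamma_0^{\,n+1}\ge\theta^{p}$, and you build $\lambda$ directly into the Cauchy--Schwarz weight (via the duplication formula for $\Gamma$), so no rescaling step is needed. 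Your route is more elementary (no analytic continuation to a complex strip, no three-circle argument) and makes the origin of $\lambda\wedge\frac{b}{2}$ in $p$ quite transparent; its delicate points --- which you do address --- are that the constants of Lemma \ref{L6} are uniform along the chain because they depend only on $\nu$, $\rho$ and $|E|/(b-a)$ and not on the location of the interval, and that lowering the exponent from $\kappa$ to $\theta^{p}$ is legitimate only after normalizing the data term by $M_0$ so that the base is at most $1$. What the paper's complex-analytic chaining buys is a clean fixed factor $\frac12$ in the exponent per step and a single use of the real-interpolation lemma; quantitatively the two arguments produce the same form of constant and exponent.
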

\begin{proof}
We organize the proof by two steps.\\
\textbf{Step 1.}
We show that (\ref{L7.1}) holds for $\lambda=1$.\\
For $F_\nu(f)\in C_0^{\infty}(\mathbb{R}^{+})$, we have
\begin{equation*}
 f(x)=\int_{\mathbb{R}^{+}}\sqrt{xy}J_\nu(xy)F_\nu(f)(y)dy,
\end{equation*}
then
\begin{equation*}
x^{-\nu-1/2}f(x)=\int_{\mathbb{R}^{+}}\frac{J_\nu(xy)}{(xy)^\nu}F_\nu(f)(y)y^{\nu+1/2}dy.
\end{equation*}
Now let $g(x)=x^{-\nu-1/2}f(x)$ and we know that $g(x)$ is an analytic even function, and for each $k\in \mathbb{N}^{+}$,
\begin{equation*}
\frac{\partial^{k}g(x) }{\partial x^{k}}=\int_{\mathbb{R}^{+}}\frac{\partial^k}{\partial x^{k}}\left( \frac{J_\nu(xy)}{(xy)^{\nu}}\right) F_\nu(f)(y)y^{\nu+1/2}dy.
\end{equation*}
Hence, by the H\"{o}lder inequality, for each $k\in \mathbb{N}^{+}$,
\begin{equation*}
\begin{aligned}
\left\|  \frac{\partial^{k}g}{\partial x^{k}}\right\| _{L^{\infty}(\mathbb{R}^{+})}&\leq \sqrt{\int_{\mathbb{R}^{+}}\left| \frac{\partial^{k}}{\partial x^{k}}\left( \frac{J_\nu(xy)}{(xy)^{\nu}}\right) \right| ^{2}e^{-y}y^{2\nu+1}dy}\sqrt{\int_{\mathbb{R}^{+}}|F_\nu(f)|^{2}e^{y}dy}\\
&\leq
\sqrt{\sqrt{\int_{\mathbb{R}^{+}}\left| \frac{\partial^{k}}{\partial x^{k}}\left( \frac{J_\nu(xy)}{(xy)^{\nu}}\right) \right| ^{4}e^{-y}dy}\sqrt{\int_{\mathbb{R}^{+}}e^{-y}y^{4\nu+2}dy}}\sqrt{\int_{\mathbb{R}^{+}}|F_\nu(f)|^{2}e^{y}dy},
\end{aligned}
\end{equation*}
then by \eqref{L2.19}, we have
\begin{equation*}
\begin{aligned}
\left\|  \frac{\partial^{k}g}{\partial x}\right\| _{L^{\infty}(\mathbb{R}^{+})}&\leq
\sqrt{\sqrt{\int_{\mathbb{R}^{+}}y^{4k}e^{-y}dy}\sqrt{\int_{\mathbb{R}^{+}}e^{-y}y^{4\nu+2}dy}}\sqrt{\int_{\mathbb{R}^{+}}|F_\nu(f)|^{2}e^{y}dy}\\ &=\Gamma(4\nu+3)^{\frac{1}{4}}(4k!)^{\frac{1}{4}}\sqrt{\int_{\mathbb{R}^{+}}|F_\nu(f)|^{2}e^{y}dy}.
\end{aligned}
\end{equation*}

We next claim that there is an absolute constant $C>1$ such that
\begin{equation*}
(4k!)^{\frac{1}{4}}\leq k!C^{k}, \,\,\,for\,\,all\,\,k\in \mathbb{N}^{+}.
\end{equation*}
In fact, using Stirling's approximation for factorials
\begin{equation*}
\ln(m!)=m\ln m-m+O(\ln m),\,\,\forall m\in \mathbb{N}^{+},
\end{equation*}
we see that for all $k\in \mathbb{N}^{+}$,
\begin{equation*}
\ln((4k!)^{\frac{1}{4}})=\frac{1}{4}(4k\ln(4k)-4k+O(\ln(4k)))=\ln k!+k\ln 4+O(\ln k).
\end{equation*}
Thus, there exists an absolute constant $C>1$ such that
\begin{equation*}
(4k!)^{\frac{1}{4}}\leq e^{[\ln k!+k\ln C]}=k!C^{k},\,\,\,for\,\,all\,\,k\in \mathbb{N}^{+}.
\end{equation*}
By this, we get
\begin{equation*}
\left\|  \frac{\partial^{k}g}{\partial x^{k}}\right\| _{L^{\infty}(\mathbb{R}^{+})}\leq \Gamma(4\nu+3)^{\frac{1}{4}}k!C^{k}\sqrt{\int_{\mathbb{R}^{+}}|F_\nu(f)|^{2}e^{y}dy}.
\end{equation*}
Let $A=[a_1,a_2],~B=[b_1,b_2]$, $x_0$, $x_1$ be the center of $A$ and $B$, and $a$, $b$ denote the length of $A,B$ respectively. $B_{r}(x_0)$ denotes the interval in $\mathbb{R}^{+}$ with center $x_0 $, length $2r$. Let 
\begin{equation}\label{L7.2}
M=\Gamma(4\nu+3)^{\frac{1}{4}}\sqrt{\int_{\mathbb{R}^{+}}|F_\nu(f)|^{2}e^{y}dy},~~~r_0=\frac{C^{-1}\wedge \frac{b}{2}}{5}<1,
\end{equation}
we get
\begin{equation*}
\left| \frac{\partial^{k}g}{\partial x^{k}}\right| \leq M\frac{k!}{(5r_0)^k}, ~~~~~~~x\in B_{2r_0}(x_1).
\end{equation*}
Then by Lemma \ref{L6}, there exist  constants $C_1=C_{1}(\nu)>0$ and $\theta=\theta(\nu)\in(0,1)$ such that
\begin{equation*}
\lVert g\rVert_{L^{\infty}(B_{2r_0}(x_1))}\leq C_1M^{1-\theta}(\mu_\nu(B_{r_0}(x_1))^{-1}\lVert g\rVert_{L_\nu^{1}(B_{r_0}(x_1))})^\theta.
\end{equation*}
The H\"{o}lder inequality yields
\begin{equation*}
\lVert g\rVert_{L^{\infty}(B_{2r_0}(x_1))}\leq C_1M^{1-\theta}(\mu_\nu(B_{r_0}(x_1))^{-\frac{1}{2}}\lVert g\rVert_{L_\nu^{2}(B_{r_0}(x_1))})^\theta.
\end{equation*}
Since $B_{r_0}(x_1)\subset B$ and $\lVert g\rVert_{L_\nu^{2}(B_{r_0}(x_1))}=\lVert f\rVert_{L^{2}(B_{r_0}(x_1))}$, we have
\begin{equation*}
\lVert g\rVert_{L^{\infty}(B_{2r_0}(x_1))}\leq C_1M^{1-\theta}(\mu_\nu(B_{r_0}(x_1))^{-\frac{1}{2}}\lVert f\rVert_{L^{2}(B)})^\theta.
\end{equation*}
Just as the proof in \cite{WWZ}, denote $D_r(z)$ for the closed disk in the complex plane, centered at $z$ and of radius $r$. It is clear that $D_{r_0}((k+1)r_0)\subset D_{2r_0}(kr_0)$, $k=1,2,...$
Define $G(s)=\frac{1}{M}g(x_1+s)$, $s\in \mathbb{R}^{+}$.
Then $G$ can be extended to an analytic function on
\begin{equation*}
\Omega_{r_{0}}:=\{x+iy\in \mathbb{C}:x,y\in \mathbb{R},|y|<5r_0\}.
\end{equation*}
In addition, $G$ has the property that
$\lVert G\rVert_{L^{\infty}(\Omega_{r_0})}\leq 1$. The function $G(4r_{0}z)$ is analytic on $D_1(0)$ and 
$\sup\limits_{z\in D_1(0)}|G(4r_0z)|\leq1$. Then apply Lemma 3.2 in \cite{AE} to find that there are constants $C_2>0$ and $\theta_1\in (0,1)$ such that
\begin{equation*}
\sup\limits_{z\in D_{1/2}(0)}|G(4r_0z)|\leq C_2\sup\limits_{x\in \mathbb{R},|x|\leq 1/5}|G(4r_0x)|^{\theta_{1}}.
\end{equation*}
We obtain
\begin{equation*}
\lVert G\rVert_{L^{\infty}(D_{2r_0}(0))}\leq C_2\left( \frac{1}{M}\lVert g\rVert_{L^{\infty}(B_{2r_0}(x_1))}\right) ^{\theta_{1}},
\end{equation*}
then we have
\begin{equation}\label{L7.3}
\lVert G\rVert_{L^{\infty}(D_{2r_0}(0))}\leq C_{2}C_{1}^{\theta_1}\mu_\nu(B_{r_0}(x_1))^{-\frac{\theta\theta_1}{2}}\left( \frac{1}{M}\lVert f\rVert_{L^2(B)}\right) ^{\theta\theta_1}.
\end{equation}

Meanwhile, we can apply the Hadamard three-circle theorem to deduce that for each $k=1,2,...$,
\begin{equation*}
\lVert G\rVert_{L^{\infty}(D_{2r_0}(kr_0))}\leq \lVert G\rVert^{1/2}_{L^{\infty}(D_{r_0}(kr_0))}\lVert G\rVert^{1/2}_{L^{\infty}(D_{4r_0}(kr_0))}\leq \lVert G\rVert^{1/2}_{L^{\infty}(D_{r_0}(kr_0))}.
\end{equation*}
We see that for each $k=1,2,...$,
\begin{equation*}
\lVert G\rVert_{L^{\infty}(D_{r_0}((k+1)r_0))}\leq \lVert G\rVert_{L^{\infty}(D_{2r_0}(kr_0))}\leq \lVert G\rVert^{1/2}_{L^{\infty}(D_{r_0}(kr_0))},
\end{equation*}
which implies that for each $k=1,2,...$,
\begin{equation*}
\lVert G\rVert_{L^{\infty}(D_{r_0}((k+1)r_0))}\leq \lVert G\rVert^{1/2}_{L^{\infty}(D_{r_0}(kr_0))}\leq ...\leq \lVert G\rVert^{(1/2)^{k}}_{L^{\infty}(D_{r_0}(r_0))}.
\end{equation*}
This yields
\begin{equation}\label{L7.4}
\begin{split}
\lVert G\rVert_{L^{\infty}(\cup_{1\leq k\leq n}D_{r_0}(kr_0))}&=\sup\limits_{1\leq k\leq n}\lVert G\rVert_{L^{\infty}(D_{r_0}(kr_0))}\leq \sup\limits_{1\leq k\leq n}\lVert G\rVert^{(1/2)^{k-1}}_{L^{\infty}(D_{r_0}(r_0))}\\
&\leq \sup\limits_{1\leq k\leq n}\lVert G\rVert^{(1/2)^{n-1}}_{L^{\infty}(D_{r_0}(r_0))}\leq  \lVert G\rVert^{(1/2)^{K}}_{L^{\infty}(D_{r_0}(r_0))}, 
\end{split}
\end{equation}
where $n$ is the integer such that
\begin{equation*}
nr_0\geq |x_0-x_1|+\frac{a}{2}+\frac{b}{2}>(n-1)r_0,
\end{equation*} 
and 
 \begin{equation*}
 K=\frac{|x_0-x_1|+\frac{a}{2}+\frac{b}{2}}{r_0},
 \end{equation*}
it follows that
\begin{equation*}
\left[ 0,|x_0-x_1|+\frac{a}{2}+\frac{b}{2}\right] \subset\bigcup_{1\leq k\leq n}D_{r_0}(kr_0))~~and~~ D_{r_0}(r_0)\subset D_{2r_0}(0).
\end{equation*}
We see from (\ref{L7.4}) that for all $s\in [0,|x_0-x_1|+\frac{a}{2}+\frac{b}{2}]$,
\begin{equation}\label{L7.5}
|G(s)|\leq \lVert G\rVert_{L^{\infty}(\cup_{1\leq k\leq n}D_{r_0}(kr_0))}\leq \lVert G\rVert^{(1/2)^{K}}_{L^{\infty}(D_{r_0}(r_0))}\leq \lVert G\rVert^{(1/2)^{K}}_{L^{\infty}(D_{2r_0}(0))}.
\end{equation}
From (\ref{L7.3}) and (\ref{L7.5}), we find that for all $s\in [0,|x_0-x_1|+\frac{a}{2}+\frac{b}{2}]$,
\begin{equation*}
	\begin{split}
|g(x_1+s)|&=M|G(s)|\leq M\lVert G\rVert^{(1/2)^{K}}_{L^{\infty}(D_{2r_0}(0))}\\
&\leq M\left( C_{2}C_{1}^{\theta_1}\mu_\nu(B_{r_0}(x_1))^{-\frac{\theta\theta_1}{2}}\left( \frac{1}{M}\lVert f\rVert_{L^2(B)}\right) ^{\theta\theta_1}\right)  ^{(1/2)^{K}}\\
&= \left( C_{2}C_{1}^{\theta_1}\mu_\nu(B_{r_0}(x_1))^{-\frac{\theta\theta_1}{2}}\right) ^{2^{-K}}M^{1-\frac{\theta\theta_1}{2^{K}}}\lVert f\rVert_{L^2(B)}^{\frac{\theta\theta_1}{2^{K}}}.
\end{split}
\end{equation*}
One can easily find that above inequality holds for $g(x_1-s)$, for all $s\in [0,|x_0-x_1|+\frac{a}{2}+\frac{b}{2}]$, too. We see that,
\begin{equation*}
\sup\limits_{|x-x_1|\leq |x_0-x_1|+\frac{a}{2}+\frac{b}{2}}|g(x)|\leq  \left( C_{2}C_{1}^{\theta_1}\mu_\nu(B_{r_0}(x_1))^{-\frac{\theta\theta_1}{2}}\right) ^{2^{-K}}M^{1-\frac{\theta\theta_1}{2^{K}}}\lVert f\rVert_{L^2(B)}^{\frac{\theta\theta_1}{2^{K}}}.
\end{equation*}
Since $A\subset \{x:|x-x_1|\leq |x_0-x_1|+\frac{a}{2}+\frac{b}{2}\}\cap \mathbb{R}^{+}$ and
\begin{equation*}
\sup\limits_{\{x:|x-x_1|\leq |x_0-x_1|+\frac{a}{2}+\frac{b}{2}\}\cap \mathbb{R}^{+}}|g(x)|\leq \sup\limits_{|x-x_1|\leq |x_0-x_1|+\frac{a}{2}+\frac{b}{2}}|g(x)|,
\end{equation*}
the above estimate yields
\begin{equation*}
\begin{split}
\int_A|f(x)|^{2}dx&=\int_A|g(x)|^{2}d\mu_\nu(x)\leq \mu_\nu(A)\sup\limits_{\{x:|x-x_1|\leq |x_0-x_1|+\frac{a}{2}+\frac{b}{2}\}\cap \mathbb{R}^{+}}|g(x)|^{2}\\
&\leq \mu_\nu(A)\left( C_{2}C_{1}^{\theta_1}\mu_\nu(B_{r_0}(x_1))^{-\frac{\theta\theta_1}{2}}\right) ^{2^{-(K-1)}}M^{2(1-\frac{\theta\theta_1}{2^{K}})}\lVert f\rVert_{L^2(B)}^{\frac{2\theta\theta_1}{2^{K}}}.
\end{split}
\end{equation*}
We observe that for $I=[a,b]$,
\begin{equation*}
\mu_\nu([a,b])=\int_a^{b}x^{2\nu+1}dx=\frac{b^{2(\nu+1)}-a^{2(\nu+1)}}{2(\nu+1)}.
\end{equation*}
By inequality $(a^{p}+b^{p})\leq (a+b)^{p},$ $ b>0,$ $a>0,$ $ p\geq 1$, we get
\begin{equation*}
\frac{b^{2(\nu+1)}-a^{2(\nu+1)}}{2(\nu+1)} \geq \frac{(b-a)^{2(\nu+1)}}{2(\nu+1)}
 ~~~~~i.e.~~~~ \mu_\nu(I)\geq\frac{|I|^{2\nu+2}}{2(\nu+1)},
\end{equation*} 
where $|I|$ is the length of $I$, for any interval $I\subset \mathbb{R}^{+}$. We get 
\begin{equation*}
	\begin{split}
	\int_A|f(x)|^{2}dx&\leq \mu_\nu(A)\left( C_{2}C_{1}^{\theta_1}\left( \frac{(2r_0)^{2\nu+2}}{2(\nu+1)}\right) ^{-\frac{\theta\theta_1}{2}}\right) ^{2^{-(K-1)}}M^{2(1-\theta\theta_1/2^{K})}\lVert f\rVert_{L^2(B)}^{2\theta\theta_1/2^{K}}\\
		 &\leq\mu_\nu(A)(C_{2}C_{1}^{\theta_1}r_0^{-(\nu+1)}+1)^{2}M^{2(1-\theta\theta_1/2^{K})}\lVert f\rVert_{L^2(B)}^{2\theta\theta_1/2^{K}}\\
		&= \frac{(a_{2}^{2(\nu+1)}-a_{1}^{2(\nu+1)})}{2(\nu+1)}(C_{2}C_{1}^{\theta_1}r_0^{-(\nu+1)}+1)^{2}M^{2(1-\theta\theta_1/2^{K})}\lVert f\rVert_{L^2(B)}^{2\theta\theta_1/2^{K}}.	
	\end{split}
\end{equation*}

Finally by $M\geq\lVert f\rVert_{L^{2}(B)}$ and  $r_0\geq \frac{C^{-1}(1\wedge \frac{b}{2})}{5}$, we have
\begin{equation*}
	\begin{split}
	\int_A|f(x)|^{2}dx&\leq\frac{(a_{2}^{2(\nu+1)}-a_{1}^{2(\nu+1)})}{2(\nu+1)}(1+C_3) ^{2}(5C)^{2\nu+2}\left( \left( 1\wedge \frac{b}{2}\right) ^{-(\nu+1)}+1\right) ^2M^{2}\left( \frac{\lVert f\rVert^{2}_{L^{2}(B)}}{M^{2}}\right) ^{\alpha_1}\\
	&\leq 4\frac{(a_{2}^{2(\nu+1)}-a_{1}^{2(\nu+1)})}{2(\nu+1)}(1+C_3) ^{2}(5C)^{2\nu+2}2^{2\nu+2}(b^{-2(\nu+1)}+1)M^{2}\left( \frac{\lVert f\rVert^{2}_{L^{2}(B)}}{M^{2}}\right) ^{\alpha_1}\\
	&\leq C_4^{2(\nu+1)}(a_{2}^{2(\nu+1)}-a_{1}^{2(\nu+1)})(b^{-2(\nu+1)}+1)M^{2}\left( \frac{\lVert f\rVert^{2}_{L^{2}(B)}}{M^{2}}\right) ^{\alpha_2},
	\end{split}
\end{equation*}
where $\alpha_{1}:=\theta_2(\frac{1}{2})^{\frac{|x_0-x_1|+\frac{a}{2}+\frac{b}{2}}{r_0}}$ and $\alpha_2:=min\left\lbrace \theta_2,(\frac{1}{2})^{5C}\right\rbrace ^{1+\frac{|x_0-x_1|+\frac{a}{2}+\frac{b}{2}}{1\wedge\frac{b}{2}}}$.
\\
\\ \textbf{Step 2.} We show that (\ref{L7.1}) holds for $\lambda>0$.\\
Define $h(x)=\lambda^{1/2}f(\lambda x),$ $ \lambda>0,$ $ x \in \mathbb{R}^{+}$. It is clear that
\begin{equation*}
h\in L^{2}(\mathbb{R}^{+}) ~~and~~ F_\nu(h)(x)=\lambda^{-\frac{1}{2}}F_\nu(f)(x/\lambda).
\end{equation*}
Since $F_\nu(f)\in C_0^{\infty}(\mathbb{R}^{+})$, the above implies $F_\nu(h)(x)\in C_0^{\infty}(\mathbb{R}^{+})$. Thus, we have
\begin{equation}\label{L7.6}
\begin{split}
\int_{A/\lambda}|h(x)|^{2}dx&\leq C\left( \left( \frac{a_{2}}{\lambda}\right) ^{2(\nu+1)}-\left( \frac{a_{1}}{\lambda}\right) ^{2(\nu+1)}\right) \left( \left( \frac{b}{\lambda}\right) ^{-2(\nu+1)}+1 \right)\\
&~~~~~\times \left( \int_{B/\lambda}|h(x)|^{2}dx\right) ^{\theta^{p_1}}\left( \int_0^{\infty}|F_\nu(h)(y)|^{2}e^ydy\right) ^{1-\theta^{p_1}},
\end{split}
\end{equation}
where 
\begin{equation*}
p_1=1+\frac{|\frac{x_0}{\lambda}-\frac{x_1}{\lambda}|+\frac{a}{2\lambda}+\frac{b}{2\lambda}}{1\wedge\frac{b}{2\lambda}}=1+\frac{|x_0-x_1|+\frac{a}{2}+\frac{b}{2}}{\lambda\wedge \frac{b}{2}}.
\end{equation*}
From (\ref{L7.6}), we find that\\
\\$\int_A|f(x)|^{2}dx=\int_{A/\lambda}|h(x)|^{2}dx$
\begin{equation*}
\leq C(\nu)(a_{2}^{2(\nu+1)}-a_{1}^{2(\nu+1)})(\lambda^{-2(\nu+1)}+b^{-2(\nu+1)})
 \left( \int_B|f(x)|^{2}dx\right) ^{\theta^{p_1}}\left( \int_{\mathbb{R}^{+}}|F_\nu(f)(y)|^{2}e^{\lambda y}dy\right) ^{1-\theta^{p_1}}.
\end{equation*}
Thus, we complete the proof of the lemma.
\end{proof}
By the above lemma, we omit the proofs and give the following two corollaries.
\begin{corollary}\label{C1}
There exist constants $C=C(\nu)>0$ and $\theta=\theta(\nu)\in(0,1)$ such that for any $b,$ $\lambda>0$, 
\begin{equation}\label{C1.1}
\begin{split}
\int_0^{\infty}|f(x)|^{2}dx\leq C\left( 1+\frac{b^{2\nu+2}}{\lambda^{2\nu+2}}\right) \left( \int_{[0,b]^{c}}|f(x)|^{2}dx\right) ^{\theta^{1+\frac{b}{\lambda}}}\left( \int_{\mathbb{R}^{+}}|F_\nu(f)(y)|^{2}e^{\lambda y}dy\right) ^{1-\theta^{1+\frac{b}{\lambda}}}
\end{split}
\end{equation}
for each $f\in L^{2}(\mathbb{R}^{+})$ with $F_\nu(f)\in C_0^{\infty}(\mathbb{R}^{+})$.
\end{corollary}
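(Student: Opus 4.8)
The plan is to reduce Corollary \ref{C1} to the interpolation inequality \eqref{L7.1} by splitting $\mathbb{R}^{+}$ into a bounded interval and its complement. Concretely, I would write
\[
\int_{\mathbb{R}^{+}}|f(x)|^{2}\,dx=\int_{[0,b]}|f(x)|^{2}\,dx+\int_{[0,b]^{c}}|f(x)|^{2}\,dx
\]
and apply Lemma \ref{L7} to the first term with the explicit choice $A=[0,b]$ and $B=[b,2b]$. For this choice the centers of $A$ and $B$ are $b/2$ and $3b/2$, the lengths of $A$ and $B$ are both $b$, so the prefactor $(a_{2}^{2(\nu+1)}-a_{1}^{2(\nu+1)})(\lambda^{-2(\nu+1)}+b^{-2(\nu+1)})$ in \eqref{L7.1} equals $b^{2(\nu+1)}(\lambda^{-2(\nu+1)}+b^{-2(\nu+1)})=1+b^{2\nu+2}/\lambda^{2\nu+2}$, while the exponent parameter becomes $p_{0}:=1+\frac{2b}{\lambda\wedge(b/2)}$. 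Since $B\subset[0,b]^{c}$, the term $\int_{B}|f|^{2}$ appearing in \eqref{L7.1} is bounded by $Y:=\int_{[0,b]^{c}}|f|^{2}$, so, writing $Z:=\int_{\mathbb{R}^{+}}|F_{\nu}(f)(y)|^{2}e^{\lambda y}\,dy$, Lemma \ref{L7} already gives $\int_{[0,b]}|f|^{2}\le C(\nu)\bigl(1+b^{2\nu+2}/\lambda^{2\nu+2}\bigr)Y^{\theta^{p_{0}}}Z^{1-\theta^{p_{0}}}$.

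It then remains to absorb the leftover term $\int_{[0,b]^{c}}|f|^{2}=Y$. Here I would use that $F_{\nu}$ is a unitary involution on $L^{2}(\mathbb{R}^{+})$ (Lemma \ref{L1}) together with $e^{\lambda y}\ge1$ for $y\ge0$, which gives $Y\le\int_{\mathbb{R}^{+}}|f|^{2}=\int_{\mathbb{R}^{+}}|F_{\nu}(f)|^{2}\le Z$, hence $Y=Y^{\theta^{p_{0}}}Y^{1-\theta^{p_{0}}}\le Y^{\theta^{p_{0}}}Z^{1-\theta^{p_{0}}}$; adding this to the previous bound and absorbing the extra additive constant yields
\[
\int_{\mathbb{R}^{+}}|f(x)|^{2}\,dx\le C(\nu)\Bigl(1+\frac{b^{2\nu+2}}{\lambda^{2\nu+2}}\Bigr)Y^{\theta^{p_{0}}}Z^{1-\theta^{p_{0}}}.
\]

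The last — and only slightly delicate — point is to rewrite the exponent $\theta^{p_{0}}$, which depends on $b$ and $\lambda$ through the awkward quantity $p_{0}$, in the announced form $\widetilde{\theta}^{\,1+b/\lambda}$. The key observations are that, because $Y\le Z$, an inequality $X\le KY^{\alpha}Z^{1-\alpha}$ remains valid if $\alpha$ is \emph{decreased} (indeed $Y^{\alpha}Z^{1-\alpha}\le Y^{\alpha'}Z^{1-\alpha'}$ whenever $\alpha'\le\alpha\le1$), and that $p_{0}/(1+b/\lambda)\le5$ for all $b,\lambda>0$; the latter is checked by splitting into the cases $\lambda\ge b/2$, where $p_{0}=5$ and $1+b/\lambda\le3$, and $\lambda<b/2$, where $p_{0}=1+2b/\lambda$ and $(1+2b/\lambda)/(1+b/\lambda)<2$. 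Consequently $\theta^{p_{0}}\ge(\theta^{5})^{1+b/\lambda}$, so replacing the exponent $\theta^{p_{0}}$ by $(\theta^{5})^{1+b/\lambda}$ and setting $\widetilde{\theta}:=\theta^{5}\in(0,1)$ produces exactly \eqref{C1.1}. I do not expect any genuinely new difficulty beyond this bookkeeping; the argument is the half-line counterpart of the analogous reduction carried out in \cite{WWZ}.
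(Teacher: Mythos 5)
Your proposal is correct: the choice $A=[0,b]$, $B=[b,2b]$ in Lemma \ref{L7}, the trivial absorption of $\int_{[0,b]^{c}}|f|^{2}$ via $Y\le Z$, and the exponent adjustment $p_{0}\le 5(1+b/\lambda)$ (legitimate since decreasing the exponent on $Y$ only enlarges $Y^{\alpha}Z^{1-\alpha}$ when $Y\le Z$) all check out, with the prefactor computing exactly to $1+b^{2\nu+2}/\lambda^{2\nu+2}$. This is precisely the route the paper intends — it states Corollary \ref{C1} as a direct consequence of Lemma \ref{L7} and omits the details — so your argument simply fills in the omitted derivation in the expected way.
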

\begin{corollary}\label{C2}
There exists a positive constant $C=C(\nu)$ such that for each $b>0$ and $N\geq0$ and all $f\in L^{2}(\mathbb{R}^{+})$ with supp $F_\nu f\subset [0,N]$.
\begin{equation}\label{C2.1}
\int_0^{\infty}|f(x)|^{2}dx\leq e^{C(1+bN)} \int_{[0,b]^{c}}|f(x)|^{2}dx
\end{equation}

\end{corollary}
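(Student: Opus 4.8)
The plan is to deduce Corollary \ref{C2} from Corollary \ref{C1} by choosing the decay parameter $\lambda$ optimally in terms of the support bound $N$. First I would observe that if $f \in L^2(\mathbb{R}^+)$ has $\operatorname{supp} F_\nu f \subset [0,N]$, then $F_\nu(f) \in C_0^\infty(\mathbb{R}^+)$ is not automatic (it is only $L^2$ with compact support), so a preliminary density/approximation step is needed: approximate $F_\nu f$ in $L^2$ by functions $g_j \in C_0^\infty(\mathbb{R}^+)$ with $\operatorname{supp} g_j \subset [0, N+1]$, set $f_j = F_\nu^{-1} g_j = F_\nu(g_j)$ (using that $F_\nu$ is a unitary involution, Lemma \ref{L1}), apply the estimate to each $f_j$, and pass to the limit. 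Since all the integrals appearing are continuous under $L^2$ convergence of $f_j \to f$ (the weight $e^{\lambda y}$ is bounded on $[0,N+1]$), the inequality survives the limit with $N$ replaced by $N+1$, which is harmless after adjusting the constant $C$.

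Next, for such an $f$ with $F_\nu(f) \in C_0^\infty(\mathbb{R}^+)$ supported in $[0,N]$, I would apply Corollary \ref{C1}. The crucial point is that the last factor simplifies: since $F_\nu(f)$ is supported in $[0,N]$,
\begin{equation*}
\int_{\mathbb{R}^+} |F_\nu(f)(y)|^2 e^{\lambda y}\, dy \leq e^{\lambda N} \int_{\mathbb{R}^+} |F_\nu(f)(y)|^2\, dy = e^{\lambda N} \int_{\mathbb{R}^+} |f(x)|^2\, dx,
\end{equation*}
where the last equality is Plancherel for $F_\nu$. Writing $I = \int_{\mathbb{R}^+}|f|^2$, $J = \int_{[0,b]^c}|f|^2$, and $\eta = \theta^{1 + b/\lambda} \in (0,1)$, Corollary \ref{C1} together with the above gives
\begin{equation*}
I \leq C\Big(1 + \frac{b^{2\nu+2}}{\lambda^{2\nu+2}}\Big) J^{\eta} \big(e^{\lambda N} I\big)^{1-\eta},
\end{equation*}
and dividing by $I^{1-\eta}$ (assuming $I \neq 0$; the case $I=0$ is trivial) yields $I^{\eta} \leq C(1 + b^{2\nu+2}\lambda^{-2\nu+2})\, e^{\lambda N(1-\eta)} J^{\eta}$, i.e.
\begin{equation*}
I \leq C^{1/\eta}\Big(1 + \frac{b^{2\nu+2}}{\lambda^{2\nu+2}}\Big)^{1/\eta} e^{\lambda N (1-\eta)/\eta}\, J.
\end{equation*}

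Finally I would choose $\lambda$ to balance the two exponential-type factors. The natural choice is $\lambda = b$ (or $\lambda \sim b$), which makes $\eta = \theta^{2}$ a fixed constant in $(0,1)$ depending only on $\nu$, and makes $1 + b^{2\nu+2}\lambda^{-2\nu+2} = 2$; then $C^{1/\eta} \cdot 2^{1/\eta}$ is an absolute constant depending only on $\nu$, and the remaining factor is $e^{\lambda N(1-\eta)/\eta} = e^{bN(1-\theta^2)/\theta^2}$. Absorbing the constant prefactor into the exponential (it is bounded by $e^{C}$ for suitable $C = C(\nu)$) gives $I \leq e^{C(1 + bN)} J$, which is exactly \eqref{C2.1}. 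The main thing to be careful about is the density argument at the start — ensuring the approximants still have compactly supported Hankel transforms so that Corollary \ref{C1} genuinely applies — but this is routine given that $F_\nu$ is a unitary involution preserving $C_0^\infty(\mathbb{R}^+)$-type regularity in the appropriate sense. The algebraic balancing of $\lambda$ is the only genuinely content-bearing step, and it is short.
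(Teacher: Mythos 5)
Your derivation is correct and is essentially the proof the paper intends (the paper omits it, saying only that the corollaries follow from Lemma \ref{L7}): passing through Corollary \ref{C1}, bounding $\int |F_\nu(f)|^{2}e^{\lambda y}dy\leq e^{\lambda N}\int|f|^{2}$ by Plancherel, absorbing $I^{1-\eta}$, and balancing with $\lambda=b$ so that $\theta^{1+b/\lambda}=\theta^{2}$ is a fixed $\nu$-dependent constant is exactly the same balancing the paper itself performs when deducing Theorem \ref{T5} from Theorem \ref{T3}(i) with $\lambda=b/T$. One small correction to your preliminary density step: enlarging the support to $[0,N+1]$ is not ``harmless after adjusting $C$,'' since $e^{C(1+b(N+1))}=e^{C(1+bN+b)}$ cannot be dominated by $e^{C'(1+bN)}$ uniformly when $b$ is large and $N$ small; instead approximate $F_\nu f$ by functions in $C_{0}^{\infty}(\mathbb{R}^{+})$ supported in $[0,N+\delta]$ and let $\delta\to0$, so the constant converges to $e^{C(1+bN)}$.
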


With the above lemmas and corollaries in hand, we now can show Theorem \ref{T3} and \ref{T4}.
\begin{proof}
[\textbf{Proof of Theorem \ref{T3}.}]
For fixed  $\lambda,$ $b,$ $T>0$ and $u_0\in C_0^{\infty}(\mathbb{R}^{+})$.
Define 
\begin{equation}\label{T3.3}
	\begin{split}
		f(x):=e^{-\frac{ix^{2}}{4T}}u(x,T;u_0), ~~~~~x\in \mathbb{R}^{+}.
	\end{split}
\end{equation}
From (\ref{L3.1}), we know that
\begin{equation}\label{T3.0}
(2T)^{\frac{1}{2}}e^{\frac{i(\nu+1)\pi}{2}}f(x)=F_\nu(e^{\frac{iy^{2}}{4T}}u_0(y))(x/2T),~~~~~x\in \mathbb{R}^{+}.
\end{equation} 
This implies that for a.e. $x\in \mathbb{R}^{+}$,
\begin{equation}\label{T3.4}
\begin{aligned}
F_\nu(f)(x)&=\int_0^{\infty}\sqrt{xy}J_\nu(xy)f(y)dy\\
&=(2T)^{-\frac{1}{2}}e^{-\frac{i(\nu+1)\pi}{2}}\int_0^{\infty}\sqrt{xy}J_\nu(xy)(2T)^{\frac{1}{2}}e^{\frac{i(\nu+1)\pi}{2}}f(y)dy\\
&=(2T)^{\frac{1}{2}}e^{-\frac{i(\nu+1)\pi}{2}}\int_0^{\infty}\sqrt{x2Tz}J_\nu(x2Tz)(2T)^{\frac{1}{2}}e^{\frac{i(\nu+1)\pi}{2}}f(2Tz)dz\\
&=(2T)^{\frac{1}{2}}e^{-\frac{i(\nu+1)\pi}{2}}\int_0^{\infty}\sqrt{x2Tz}J_\nu(x2Tz)F_\nu(e^{\frac{iy^{2}}{4T}}u_0(y))(z)dz\\
&=(2T)^{\frac{1}{2}}e^{-\frac{i(\nu+1)\pi}{2}}e^{\frac{iy^{2}}{4T}}u_0(y)|_{y=2Tx}\\
&=(2T)^{\frac{1}{2}}e^{-\frac{i(\nu+1)\pi}{2}}e^{iTx^{2}}u_0(2Tx).
\end{aligned}
\end{equation}

We are going to prove the conclusions (i) and (ii) of the theorem one by one. The proof of (iii) is given in section 5. (See the proof of Theorem \ref{T7}(ii).)\par 
(i) By Corollary \ref{C1} with $\lambda$ replaced by $2T\lambda$, we obtain for some absolute constants $C=C(\nu)>0$ and $\theta=\theta(\nu)\in(0,1)$,\\
\\$\int_0^{\infty}|u(x,T;u_0)|^{2}dx
=\int_0^{\infty}|f(x)|^{2}dx$\\
\begin{equation*}
\begin{aligned}
&\leq C\left( 1+\frac{b^{2\nu+2}}{(2\lambda T)^{2\nu+2}}\right) \left( \int_{[0,b]^{c}}|f(x)|^{2}dx\right) ^{\theta^{1+b/(2\lambda T)}}\left( \int_{\mathbb{R}^{+}}e^{2T\lambda x}|F_\nu(f)(x)|^{2}dx\right) ^{1-\theta^{1+b/(2\lambda T)}}\\
&=C\left( 1+\frac{b^{2\nu+2}}{(2\lambda T)^{2\nu+2}}\right) \left( \frac{\int_{[0,b]^{c}}|f(x)|^{2}dx}{\int_{\mathbb{R}^{+}}e^{2T\lambda x}|F_\nu(f)(x)|^{2}dx}\right) ^{\theta^{1+b/(2\lambda T)}}\left( \int_{\mathbb{R}^{+}}e^{2T\lambda x}|F_\nu(f)(x)|^{2}dx\right) \\
&\leq C\left( 1+\frac{b^{2\nu+2}}{(\lambda T)^{2\nu+2}}\right) \left( \int_{[0,b]^{c}}|f(x)|^{2}dx\right) ^{\theta^{1+b/(\lambda T)}}
\left( \int_{\mathbb{R}^{+}}e^{2T\lambda x}|F_\nu(f)(x)|^{2}dx\right) ^{1-\theta^{1+b/(\lambda T)}}.
\end{aligned}
\end{equation*}
 By (\ref{T3.3}), (\ref{T3.4}), and after some computations, we find that\\
\\$\int_0^{\infty}|u(x,T;u_0)|^{2}dx$
\begin{equation*}
\begin{aligned}
\leq C\left( 1+\frac{b^{2\nu+2}}{(\lambda T)^{2\nu+2}}\right) \left( \int_{[0,b]^{c}}|u(x,T;u_0)|^{2}dx\right) ^{\theta^{1+b/(\lambda T)}} \left( \int_{\mathbb{R}^{+}}e^{\lambda x}|u_0(x)|^{2}dx\right) ^{1-\theta^{1+b/(\lambda T)}}.
\end{aligned}
\end{equation*}
The above inequality, along with the conservation law for the Schr\"{o}dinger equation, leads to (\ref{T3.1}). Hence (i) is true.
\\

(ii) Fix $\beta>1$ and $\gamma\in(0,1)$. For above $f(x)$, we claim there exists $C=C(\nu)$ such that
\begin{equation}\label{T3.5}
\int_0^{\infty}|f(x)|^{2}dx\leq Ce^{\left( \frac{C^{\beta}b^{\beta}}{\lambda(1-\gamma)T^{\beta}}\right) ^{1/(\beta-1)}}\left( \int_{[0,b]^{c}}|f(x)|^{2}dx\right) ^{\gamma}\left( \int_{\mathbb{R}^{+}}e^{\lambda (2Tx)^{\beta}}|F_\nu(f)|^{2}dx\right) ^{1-\gamma}.
\end{equation}
In fact, for any fixed $N\geq 0$, we make the following decomposition: $f=g_1+g_2$ in $L^{2}(\mathbb{R}^{+})$ where
\begin{equation*}
F_\nu(g_1):=\chi_{[0,N]}F_\nu(f), ~~F_\nu(g_2):=\chi_{[0,N]^{c}}F_\nu(f).
\end{equation*}
On the one hand, by applying Corollary \ref{C2} to $g_1$, we find that
\begin{equation}\label{T3.6}
	\begin{aligned}
\int_0^{\infty}|f(x)|^{2}dx
&\leq2\int_0^{\infty}|g_1(x)|^{2}dx+2\int_0^{\infty}|g_2(x)|^{2}dx\\
&\leq 2e^{C(1+bN)}\int_{[0,b]^{c}}|g_1(x)|^{2}dx+2\int_0^{\infty}|g_2(x)|^{2}dx\\
&\leq 4e^{C(1+bN)}\int_{[0,b]^{c}}(|f(x)|^{2}+|g_2(x)|^{2})dx+2\int_0^{\infty}|g_2(x)|^{2}dx\\
&\leq 4e^{C(1+bN)}\int_{[0,b]^{c}}|f(x)|^{2}dx+6e^{C(1+bN)}\int_0^{\infty}|g_2(x)|^{2}dx,   
\end{aligned}
\end{equation}	
where $C>0$ depending only on $\nu$. On the other hand, since the Hankel transform $F_\nu$ is an isometry, we get
\begin{equation*}
\begin{split}
\int_0^{\infty}|g_2(x)|^{2}dx&=\int_0^{\infty}|F_\nu(g_2)|^{2}dx=\int_0^{\infty}|\chi_{[0,N]^{c}}F_\nu(f)|^{2}dx\\
&=e^{-\lambda(2TN)^{\beta}}\int_0^{\infty}|\chi_{[0,N]^{c}}F_\nu(f)|^{2}e^{\lambda(2TN)^{\beta}}dx.
\end{split}
\end{equation*}
This, together with (\ref{T3.6}), yields\\
\begin{equation}\label{T3.7}
\int_0^{\infty}|f(x)|^{2}dx\leq 4e^{C(1+bN)}\int_{[0,b]^{c}}|f(x)|^{2}dx+6e^{C(1+bN)-\lambda(2TN)^{\beta}}\int_0^{\infty}|F_\nu(f)(x)|^{2}e^{\lambda(2Tx)^{\beta}}dx.
\end{equation}
Since it follows from the Young inequality that
\begin{equation*}
	\begin{aligned}
CbN&=[Cb((1-\gamma)\lambda(2T)^{\beta})^{-1/\beta}][((1-\gamma)\lambda(2T)^{\beta})^{1/\beta}N]\\
&\leq(1-\frac{1}{\beta})[Cb((1-\gamma)\lambda(2T)^{\beta})^{-1/\beta}]^{\frac{\beta}{\beta-1}}+\frac{1}{\beta}[((1-\gamma)\lambda(2T)^{\beta})^{1/\beta}N]^{\beta}\\
&\leq[(Cb)^{\beta}/((1-\gamma)\lambda(2T)^{\beta})]^{\frac{1}{\beta-1}}+(1-\gamma)\lambda(2TN)^{\beta}.
\end{aligned}
\end{equation*}	
We deduce from (\ref{T3.7}) that\\
\\$\int_0^{\infty}|f(x)|^{2}dx\leq 6e^{C+(\frac{(Cb)^{\beta}}{(1-\gamma)\lambda(2T)^{\beta}})^{\frac{1}{\beta-1}}}$
\begin{equation*}
~~~~~~~~~\times\left(e^{(1-\gamma)\lambda(2TN)^{\beta}}\int_{[0,b]^{c}}|f(x)|^{2}dx+e^{-\gamma\lambda(2TN)^{\beta}}\int_0^{\infty}|F_\nu(f)(x)|^{2}e^{\lambda(2Tx)|^{\beta}}dx\right).
\end{equation*}
As $N$ was arbitrary from $[0,\infty)$, the above indicates that for all $\varepsilon\in(0,1)$,
\begin{equation*}
\int_0^{\infty}|f(x)|^{2}dx\leq 6e^{C+\left( \frac{(Cb)^{\beta}}{(1-\gamma)\lambda(2T)^{\beta}}\right) ^{\frac{1}{\beta-1}}}\left(\varepsilon^{-(1-\gamma)}\int_{[0,b]^{c}}|f(x)|^{2}dx+\varepsilon^{\gamma}\int_0^{\infty}|F_\nu(f)(x)|^{2}e^{\lambda(2Tx)^{\beta}}dx\right).
\end{equation*}
It is easy to check that the above inequality holds in fact for all $\varepsilon>0$. Minimizing it with respect to $\varepsilon>0$ leads to (\ref{T3.5}). Here, we use the inequality
\begin{equation}
\inf\limits_{\varepsilon>0}(\varepsilon^{-(1-\gamma)}A+\varepsilon^{\gamma}B)\leq 2A^{\gamma}B^{1-\gamma} ~~~~~~~ for~ all~ A,B\geq 0. 
\end{equation}
This proves (\ref{T3.5}).
\\Finally from (\ref{T3.3}), (\ref{T3.4}) and (\ref{T3.5}), after some computations, we know that (ii) is true.
\end{proof}

\begin{proof}
[\textbf{Proof of Theorem \ref{T4}.}]
By (\ref{T3.3}), (\ref{T3.4}) and Lemma \ref{L7} (with $\lambda$ replaced by $2\lambda T$), we find that\\
\\$\int_{A}|u(x,T;u_0)|^{2}dx=\int_{A}|f(x)|^{2}dx$
\begin{equation*}
\begin{aligned}
&\leq C\mu_\nu(A)((2\lambda T)^{-2(\nu+1)}+b^{-2(\nu+1)})\left( \int_B|f(x)|^{2}dx\right) ^{\theta^{\alpha_1}}\left( \int_{\mathbb{R}^{+}}|F_\nu(f)(y)|^{2}e^{2\lambda T y}dy\right) ^{1-\theta^{\alpha_1}}\\
&\leq C\mu_\nu(A)((\lambda T)^{-2(\nu+1)}+b^{-2(\nu+1)})\left( \int_{B}|u(x,T;u_0)|^{2}dx\right) ^{\theta^{\alpha_1}}\left( \int_{\mathbb{R}^{+}}e^{\lambda x}|u_0(x)|^{2}dx\right) ^{1-\theta^{\alpha_1}}\\
&\leq C\mu_\nu(A)((\lambda T)^{-1}+b^{-1})^{2\nu+2}\int_{\mathbb{R}^{+}}e^{\lambda x}|u_0(x)|^{2}dx\left( \frac{\int_{B}|u(x,T;u_0)|^{2}dx}{\int_{\mathbb{R}^{+}}e^{\lambda x}|u_0(x)|^{2}dx}\right) ^{\theta^{\alpha_1}}\\
&\leq 2C\mu_\nu(A)((\lambda T)\wedge b)^{-(2\nu+2)}\int_{\mathbb{R}^{+}}e^{\lambda x}|u_0(x)|^{2}dx\left( \frac{\int_{B}|u(x,T;u_0)|^{2}dx}{\int_{\mathbb{R}^{+}}e^{\lambda x}|u_0(x)|^{2}dx}\right) ^{\theta^{\alpha_2}}
\end{aligned}
\end{equation*}
for some absolute constant $C=C(\nu)>0$ and $\theta=\theta(\nu) \in(0,1)$, where
\begin{equation*}
\alpha_1=1+\frac{|x_0-x_1|+\frac{a}{2}+\frac{b}{2}}{(2\lambda T)\wedge \frac{b}{2}}, ~~~ \alpha_2=1+\frac{|x_0-x_1|+\frac{a}{2}+\frac{b}{2}}{(\lambda T)\wedge \frac{b}{2}}.
\end{equation*} 
This proves our theorem.
\end{proof}

\section{Proofs of Theorems \ref{T5}-\ref{T9}}
Theorem \ref{T5} is mainly based on Theorem \ref{T3}. Theorem \ref{T8} is a consequence of Theorem \ref{T4}. Theorem \ref{T9} is based on Theorem \ref{T4}, as well as the other property for the Schr\"{o}dinger equation (presented in Lemma \ref{L.S.2} of this paper). 
\begin{proof}
[\textbf{Proof of Theorem \ref{T5}.}]
Given $u_{0}\in L^{2}(\mathbb{R}^{+})$ with supp $u_{0}\subset [0,N]$. By a standard density argument, we can apply Theorem \ref{T3}(i) (with $\lambda=b/T$) to get that for some $C=C(\nu)>0$ and $\theta=\theta(\nu)\in(0,1)$,
\begin{equation}\label{T5.2}
\begin{split}
\int_{\mathbb{R}^{+}}|u_0(x)|^{2}dx
&\leq 2C\left( \int_{[0,b]^{c}}|u(x,T;u_0)|^{2}dx\right) ^{\theta^{2}}\left( \int_{\mathbb{R}^{+}}e^{bx/T}|u_0(x)|^{2}dx\right) ^{1-\theta^{2}}\\
&\leq 2Ce^{bN(1-{\theta}^{2})/T}\left( \int_{[0,b]^{c}}|u(x,T;u_0)|^{2}dx\right) ^{\theta^{2}}\left( \int_{\mathbb{R}^{+}}|u_0(x)|^{2}dx\right) ^{1-\theta^{2}}.
\end{split}
\end{equation}
This implies that
\begin{equation*}
\int_{\mathbb{R}^{+}}|u_0(x)|^{2}dx\leq (2C)^{\frac{1}{{\theta}^{2}}}e^{bN(1-{\theta}^{2})/T{\theta}^{2}}\int_{[0,b]^{c}}|u(x,T;u_0)|^{2}dx.
\end{equation*}
Hence, we end the proof of the theorem.
\end{proof}
We recall Lemma 3.1 in \cite{WWZ} that will be used in the proofs of Theorems \ref{T8} and \ref{T9}.
\begin{lemma}\label{L.R}
Let $x$, $\theta\in(0,1)$ .\\
(i) For each $a>0$,
\begin{equation}\label{L.R.1}
\sum_{k=1}^{\infty}x^{\theta^{k}}e^{-ak}\leq\frac{e^{a}}{|ln\theta|}\varGamma\left(\frac{a}{|ln\theta|} \right)|lnx|^{-a/|ln\theta|}.
\end{equation}
(ii) For each $\varepsilon>0$ and $\alpha>0$,
\begin{equation}\label{L.R.2}
\sum_{k=1}^{\infty}x^{\theta^{k}}k^{-1-\varepsilon}\leq\frac{4}{\varepsilon}\alpha^{\varepsilon}e^{\varepsilon ln\varepsilon+\varepsilon +e\alpha^{-1}\theta^{-1}}(ln(\alpha|lnx|+e))^{-\varepsilon}.
\end{equation}
\end{lemma}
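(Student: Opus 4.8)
The plan is to handle both inequalities by the same device: bound the series by an integral and then substitute $u=\theta^{t}$, which turns $x^{\theta^{t}}=e^{\theta^{t}\ln x}$ into $e^{-u|\ln x|}$ (recall $\ln x<0$) and produces a Gamma-type integral. For \eqref{L.R.1}, set $\ell:=|\ln\theta|>0$, $y:=|\ln x|>0$, and $f(t):=e^{-\theta^{t}y-at}=\exp(-e^{-\ell t}y-at)$ for $t\ge0$, so that the left side of \eqref{L.R.1} equals $\sum_{k\ge1}f(k)$. For $k\ge1$ and $t\in[k,k+1]$ one has $\theta^{t}\le\theta^{k}$ and $a(t-k)\le a$, hence $f(k)/f(t)=e^{(\theta^{t}-\theta^{k})y}e^{a(t-k)}\le e^{a}$, so $f(k)\le e^{a}\int_{k}^{k+1}f(t)\,dt$; summing over $k\ge1$ gives $\sum_{k\ge1}f(k)\le e^{a}\int_{0}^{\infty}f(t)\,dt$. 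Then substitute $u=\theta^{t}=e^{-\ell t}$ (so $dt=-du/(\ell u)$, $e^{-at}=u^{a/\ell}$, and $u$ runs from $1$ to $0$) to get
\[
\int_{0}^{\infty}f(t)\,dt=\frac{1}{\ell}\int_{0}^{1}e^{-uy}u^{a/\ell-1}\,du\le\frac{1}{\ell}\int_{0}^{\infty}e^{-uy}u^{a/\ell-1}\,du=\frac{1}{\ell}\,\Gamma\!\Big(\frac{a}{\ell}\Big)\,y^{-a/\ell},
\]
which is exactly \eqref{L.R.1}.

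For \eqref{L.R.2}, keep $\ell=|\ln\theta|$, $y=|\ln x|$, and write $S:=\sum_{k\ge1}e^{-\theta^{k}y}k^{-1-\varepsilon}$. Fix an integer cutoff $N\ge1$ and split $S=S_{\le N}+S_{>N}$. For the tail, comparison with the decreasing function $t^{-1-\varepsilon}$ gives $S_{>N}\le\sum_{k>N}k^{-1-\varepsilon}\le\int_{N}^{\infty}t^{-1-\varepsilon}\,dt=\varepsilon^{-1}N^{-\varepsilon}$. For the head, one exploits the \emph{doubly} exponential decay of $x^{\theta^{k}}=e^{-\theta^{k}y}$ in the small-$k$ direction: writing $\theta^{k}y=(1/\theta)^{N-k}\theta^{N}y$ for $k\le N$ and discarding $k^{-1-\varepsilon}\le1$, one obtains $S_{\le N}\le\sum_{j\ge0}\exp\!\big(-(1/\theta)^{j}\theta^{N}y\big)$, which — using the convexity bound $(1/\theta)^{j}=e^{j\ell}\ge1+j\ell$ to dominate it by a geometric series, together with $\int_{1}^{\infty}e^{-s}s^{-1}\,ds\le e^{-1}$ — is bounded by a quantity depending only on $\theta$ and $\theta^{N}y$. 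One then chooses $N$ of order $\ell^{-1}\ln(\alpha y+e)$ (so that $\theta^{N}y\lesssim\alpha^{-1}$ and $N^{-\varepsilon}\lesssim\ell^{\varepsilon}\big(\ln(\alpha y+e)\big)^{-\varepsilon}$) and absorbs the mismatch between $\ell^{\varepsilon}$ and the target factor $\alpha^{\varepsilon}$ into $e^{e\alpha^{-1}\theta^{-1}}$ — which is large precisely in the range $\alpha\ll|\ln\theta|$ where that mismatch is significant — so that, after cleaning up the constants with Stirling's bound $\varepsilon^{\varepsilon}=e^{\varepsilon\ln\varepsilon}$, one arrives at \eqref{L.R.2}. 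In the remaining regime where $\alpha y$ is so small that the above forces $N=1$, one instead simply uses $S\le\sum_{k\ge1}k^{-1-\varepsilon}=\zeta(1+\varepsilon)\le1+\varepsilon^{-1}$, which is dominated by the then-$O(1)$ right-hand side.

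I expect \eqref{L.R.1} to be essentially a one-line computation once the substitution $u=\theta^{t}$ is set up, and I do not anticipate any difficulty there. The real work is in \eqref{L.R.2}: producing the head estimate with exactly the claimed dependence on $\alpha$, $\theta$ and $\varepsilon$. Concretely, the double-exponential series $\sum_{j\ge0}\exp(-(1/\theta)^{j}\theta^{N}y)$ must be bounded without wasting more than a factor $e^{e\alpha^{-1}\theta^{-1}}$, which is what pins down both the admissible choices of the cutoff $N$ and the precise way one uses the inequality $(1/\theta)^{j}\ge1+j\ell$; once these constants are under control the rest is routine bookkeeping, and the result is the statement of the lemma (this is [WWZ, Lemma~3.1]).
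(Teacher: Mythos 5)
First, note that the paper itself does not prove this lemma: it is quoted verbatim as Lemma~3.1 of \cite{WWZ}, so there is no in-paper argument to compare against. Judged on its own, your proof of part (i) is complete and correct: the comparison $f(k)\le e^{a}\int_{k}^{k+1}f(t)\,dt$ and the substitution $u=\theta^{t}$ give exactly \eqref{L.R.1}, with the constant $e^{a}|\ln\theta|^{-1}\Gamma(a/|\ln\theta|)$ appearing as stated.

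Part (ii), however, contains a genuine gap, not just missing bookkeeping. Write $y=|\ln x|$ and $\ell=|\ln\theta|$. With your cutoff $N$ of order $\ell^{-1}\ln(\alpha y+e)$ one has $\theta^{N}y\to\alpha^{-1}$ as $y\to\infty$, so after you discard the factor $k^{-1-\varepsilon}\le 1$ in the head, your bound $S_{\le N}\le\sum_{j\ge0}\exp\bigl(-(1/\theta)^{j}\theta^{N}y\bigr)$ is at least its $j=0$ term $\exp(-\theta^{N}y)\ge e^{-c/\alpha}$, a positive constant independent of $y$; but the right-hand side of \eqref{L.R.2} decays like $(\ln(\alpha y+e))^{-\varepsilon}\to0$ as $x\to0$ with $\alpha,\theta,\varepsilon$ fixed. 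Hence the head estimate, as described, cannot be dominated by the target, and the problematic regime is large $\alpha y$, not the small-$\alpha y$ regime you flag for the trivial $\zeta(1+\varepsilon)$ fallback. The decay of the head at this choice of $N$ comes precisely from the factor $k^{-1-\varepsilon}\approx N^{-1-\varepsilon}$ that you threw away (alternatively one must take $N$ enough smaller that $\theta^{N}y\to\infty$), and once this is repaired the remaining step you label routine --- absorbing the resulting $(\ell/\alpha)^{\varepsilon}$-type mismatch into $e^{\varepsilon\ln\varepsilon+\varepsilon+e\alpha^{-1}\theta^{-1}}$ uniformly in $\varepsilon,\alpha,\theta$ --- is itself a nontrivial optimization (the mismatch grows exponentially in $\varepsilon$ and is only controlled because of the $\varepsilon^{\varepsilon}e^{\varepsilon}$ factor), which the proposal does not carry out. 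So as written, (ii) is an incomplete sketch built on a head estimate that fails; either supply the corrected case analysis or simply cite \cite{WWZ}, Lemma~3.1, as the paper does.
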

 \begin{proof}
[\textbf{Proof of Theorem \ref{T8}.}]
 When $u_{0}=0$, (\ref{T8.1}) holds clearly for all $\varepsilon \in (0,1)$. We now fix $u_{0}\in C_{0}^{\infty}(\mathbb{R}^{+})\backslash\{0\}$. For convenience, we define 
 \begin{equation*}
 A_{1}:=\int_{\mathbb{R}^{+}}e^{\lambda_{1}x}|u_{0}(x)|^{2}dx,~~~B_{1}:=\int_{B}|u(x,T;u_0)|^{2}dx,
 \end{equation*}
 $~~~~~~~~~~~~~~~~~~~~~~~~~~~$ $R_{\lambda_{2}}:=\int_{\mathbb{R}^{+}}e^{-\lambda_{2}x}|u(x,T;u_0)|^{2}dx.$\\
 \\The proof of (\ref{T8.1}) is divided into several steps.\\
 \\ \textbf{Step 1.} There exist positive constants $C_{1}(\nu)$ and $C_{2}(\nu)$ such that
 \begin{equation}\label{T8.2}
 R_{\lambda_{2}}\leq C_{3}(x_{0},b,\lambda_{1},\lambda_{2},T)\left( ln\frac{A_{1}}{B_{1}}\right) ^{-C_{2}\lambda_{2}((\lambda_{1}T)\wedge \frac{b}{2})}A_{1},
 \end{equation}
 where 
 \begin{equation}\label{T8.3}
 	C_{3}(x_{0},b,\lambda_{1},\lambda_{2},T):=1+C_{1}\varGamma\left( C_{2}\lambda_{2}\left( (\lambda_{1}T)\wedge \frac{b}{2}\right) \right) \times exp\left( \lambda_{2}^{-1}\left( (\lambda_{1}T)\wedge \frac{b}{2}\right) ^{-1}+\lambda_{2}\left( x_{0}+\frac{b}{2}\right) \right) .
 \end{equation}
 According to Theorem \ref{T4} (with ($x_{0}, x_{1}, \frac{a}{2}, \frac{b}{2}$) replaced by ($2(k-1)\lambda_{2}^{-1}+\lambda_{2}^{-1}, x_{0}, \lambda_{2}^{-1}, \frac{b}{2}$) with $k\in {\mathbb{N}^{+}}$), we notice that \\
\\$\int_{\mathbb{R}^{+}}e^{-\lambda_{2}x}|u(x,T;u_0)|^{2}dx\leq
\sum_{k=1}^{\infty}\int_{2(k-1)\lambda_{2}^{-1}<x<2k\lambda_{2}^{-1}}e^{-2(k-1)}|u(x,T;u_0)|^{2}dx\\$
\begin{equation}\label{T8.5}
\begin{split}
&\leq
C((\lambda_{1} T)\wedge b)^{-(2\nu+2)}\left( \sum_{k=1}^{\infty}e^{-2k+2}\left( (2k\lambda_{2}^{-1})^{2\nu+2}-(2(k-1)\lambda_{2}^{-1})^{2\nu+2}\right) \left( \frac{B_{1}}{A_{1}}\right) 
^{\theta^{1+\frac{|x_{0}-(2k-1)\lambda_{2}^{-1}|+\lambda_{2}^{-1}+\frac{b}{2}}{(\lambda_{1}T)\wedge \frac{b}{2}}}}\right)A_{1}\\
&\leq
C((\lambda_{1} T)\wedge b)^{-(2\nu+2)}\left( \sum_{k=1}^{\infty}e^{-2k+2}(2k\lambda_{2}^{-1})^{2\nu+2}\left( \frac{B_{1}}{A_{1}}\right) 
^{\theta^{1+\frac{|x_{0}-(2k-1)\lambda_{2}^{-1}|+\lambda_{2}^{-1}+\frac{b}{2}}{(\lambda_{1}T)\wedge \frac{b}{2}}}}\right)A_{1}\\
&\leq C(2\lambda_{2}^{-1})^{2\nu+2}(2\nu+2)^{2\nu+2}((\lambda_{1} T)\wedge b)^{-(2\nu+2)}e^{2}\left( \sum_{k=1}^{\infty}e^{-k}\left( \frac{B_{1}}{A_{1}}\right) 
^{\theta^{1+\frac{|x_{0}-(2k-1)\lambda_{2}^{-1}|+\lambda_{2}^{-1}+\frac{b}{2}}{(\lambda_{1}T)\wedge \frac{b}{2}}}}\right) A_{1}\\
&\leq C(2\lambda_{2}^{-1})^{2\nu+2}(2\nu+2)^{2\nu+2}((\lambda_{1} T)\wedge b)^{-(2\nu+2)}e^{2}\left( \sum_{k=1}^{\infty}e^{-k}\left( \frac{B_{1}}{A_{1}}\right) 
^{\theta^{1+\frac{x_{0}+2k\lambda_{2}^{-1}+\frac{b}{2}}{(\lambda_{1}T)\wedge \frac{b}{2}}}}\right) A_{1}
\end{split}
\end{equation}
for some $\theta \in (0,1)$ and $C>0$ depending only on $\nu$. In the fourth inequality, we used  the fact that $k\leq ne^{k/n}$ for all $k\in{\mathbb{N}^{+}}$.
The last inequality is due to $B_{1}<A_{1}$ (which follows from the definitions of $A_{1}$ and $B_{1}$, the conservation law for the Schr\"{o}dinger equation and the fact that $u_{0}\neq0$).\\
Now, we apply (\ref{L.R.1}) in Lemma \ref{L.R} with
\begin{equation*}
(a,x,\theta)=(1,({B_{1}}/{A_{1}}) 
^{\theta^{1+\frac{x_{0}+\frac{b}{2}}{(\lambda_{1}T)\wedge \frac{b}{2}}}},\theta^{\frac{2}{\lambda_{2}((\lambda_{1}T)\wedge \frac{b}{2})}})
\end{equation*}
to get 
$$\sum_{k=1}^{\infty}e^{-k}\left( \frac{B_{1}}{A_{1}}\right) 
^{\theta^{1+\frac{x_{0}+2k\lambda_{2}^{-1}+\frac{b}{2}}{(\lambda_{1}T)\wedge \frac{b}{2}}}}
\leq \frac{e\lambda_{2}((\lambda_{1}T)\wedge \frac{b}{2})}{2|ln\theta|}\varGamma\left( \frac{\lambda_{2}((\lambda_{1}T)\wedge \frac{b}{2})}{2|ln\theta|}\right) \left(  \theta^{1+\frac{x_{0}+\frac{b}{2}}{(\lambda_{1}T)\wedge \frac{b}{2}}}\left| ln\frac{B_{1}}{A_{1}}\right| \right)  ^\frac{-\lambda_{2}((\lambda_{1}T)\wedge \frac{b}{2})}{2|ln\theta|}.$$
This, along with (\ref{T8.5}) and the facts that $x^{a}\leq ([a]+1)!e^{x}$ for all $x>0$, $a>0$ where $[a]$ is the integral part of $a$,
 imply that\\
\\$\int_{\mathbb{R}^{+}}e^{-\lambda_{2}x}|u(x,T;u_0)|^{2}dx$
\begin{equation*}
\begin{aligned}
&\leq
C(2\lambda_{2}^{-1})^{2\nu+2}\left( \frac{2\nu+2}{(\lambda_{1} T)\wedge b}\right)^{2\nu+2}\\ 
&~~~~\times e^{3}\frac{\lambda_{2}((\lambda_{1}T)\wedge \frac{b}{2})}{|ln\theta|}\varGamma\left( \frac{\lambda_{2}\left( (\lambda_{1}T)\wedge \frac{b}{2}\right) }{2|ln\theta|}\right) \left( \theta^{1+\frac{x_{0}+\frac{b}{2}}{(\lambda_{1}T)\wedge \frac{b}{2}}}\left| ln\frac{B_{1}}{A_{1}}\right|\right) ^\frac{-\lambda_{2}((\lambda_{1}T)\wedge \frac{b}{2})}{2|ln\theta|}A_{1}\\ &=\frac{C2^{2\nu+2}e^{3}}{|ln\theta|}(2\nu+2)^{2\nu+2}(\lambda_{2}(\lambda_{1} T)\wedge \frac{b}{2})^{-(2\nu+2)+1}\\
&~~~~\times e^{\frac{\lambda_{2}}{2}((\lambda_{1}T)\wedge \frac{b}{2}+x_{0}+\frac{b}{2})} \varGamma\left( \frac{\lambda_{2}((\lambda_{1}T)\wedge \frac{b}{2})}{2|ln\theta|}\right) \left(  ln\frac{A_{1}}{B_{1}} \right) ^\frac{-\lambda_{2}((\lambda_{1}T)\wedge \frac{b}{2})}{2|ln\theta|}A_{1}\\
&\leq\frac{C2^{2\nu+2}e^{3}}{|ln\theta|}(2\nu+2)^{2\nu+2}([2\nu+1]+1)!e^{\lambda_{2}^{-1}((\lambda_{1}T)\wedge \frac{b}{2})^{-1}+\lambda_{2} (x_{0}+\frac{b}{2})}
 \varGamma\left( \frac{\lambda_{2}((\lambda_{1}T)\wedge \frac{b}{2})}{2|ln\theta|}\right) \left( ln\frac{A_{1}}{B_{1}}\right) ^\frac{-\lambda_{2}((\lambda_{1}T)\wedge \frac{b}{2})}{2|ln\theta|}A_{1}.
\end{aligned}
\end{equation*}
This leads to (\ref{T8.2}).\\
 \\ \textbf{Step 2.} 
(\ref{T8.1}) holds if $\lambda_{2}\leq \frac{1}{C_{2}((\lambda_{1}T)\wedge \frac{b}{2})}$.\\
\\First, we claim that for each $\varepsilon \in (0,1)$,
\begin{equation}\label{T8.7}
 R_{\lambda_{2}}\leq C_{3}(\varepsilon A_{1}+\varepsilon e^{\varepsilon^{-\frac{1}{C_{2}\lambda_{2}((\lambda_{1}T)\wedge \frac{b}{2})}}}B_{1}),
\end{equation}
where $C_{3}=C_{3}(x_{0},b,\lambda_{1},\lambda_{2},T)$ is given by (\ref{T8.3}). Indeed, if $R_{\lambda_{2}}\leq C_{3}\varepsilon A_{1}$, (\ref{T8.7}) is obvious. So we only consider the case: $R_{\lambda_{2}}> C_{3}\varepsilon A_{1}$.
In this case, we have the following observation:
\begin{equation}\label{T8.8}
0<\varepsilon<\frac{R_{\lambda_{2}}}{C_{3}A_{1}}<1.
\end{equation}
Besides, two facts are given in order. First, since $\lambda_{2}\leq \frac{1}{C_{2}((\lambda_{1}T)\wedge \frac{b}{2})}$, the function $x\rightarrow xe^{x^{-\frac{1}{C_{2}\lambda_{2}((\lambda_{1}T)\wedge \frac{b}{2})}}}$ is decreasing on $(0,1)$. This, along with (\ref{T8.8}), indicates that
\begin{equation}\label{T8.9}
\frac{R_{\lambda_{2}}}{C_{3}A_{1}}e^{\left( \frac{R_{\lambda_{2}}}{C_{3}A_{1}}\right) ^{-\frac{1}{C_{2}\lambda_{2}((\lambda_{1}T)\wedge \frac{b}{2})}}}\leq \varepsilon e^{\varepsilon^{-\frac{1}{C_{2}\lambda_{2}((\lambda_{1}T)\wedge \frac{b}{2})}}}.
\end{equation}
Second, since the function $f(x)=e^{x^{-\frac{1}{C_{2}\lambda_{2}((\lambda_{1}T)\wedge \frac{b}{2})}}}$ is decreasing on $(0,\infty)$ and its inverse is the function $g(x):=(lnx)^{-C_{2}\lambda_{2}((\lambda_{1}T)\wedge \frac{b}{2})}$, we deduce from (\ref{T8.2}) that
\begin{equation}\label{T8.10}
\frac{A_{1}}{B_{1}}=f\left( g\left( \frac{A_{1}}{B_{1}}\right) \right) \leq f\left( \frac{R_{\lambda_{2}}}{C_{3}A_{1}}\right) =e^{\left( \frac{R_{\lambda_{2}}}{C_{3}A_{1}}\right) ^{-\frac{1}{C_{2}\lambda_{2}((\lambda_{1}T)\wedge \frac{b}{2})}}}.
\end{equation}
According to (\ref{T8.9}) and (\ref{T8.10}), we have
\begin{equation*}
R_{\lambda_{2}}=C_{3}\frac{R_{\lambda_{2}}}{C_{3}A_{1}}\frac{A_{1}}{B_{1}}B_{1}\leq C_{3}\left(  \frac{R_{\lambda_{2}}}{C_{3}A_{1}}e^{\left( \frac{R_{\lambda_{2}}}{C_{3}A_{1}}\right) ^{-\frac{1}{C_{2}\lambda_{2}((\lambda_{1}T)\wedge \frac{b}{2})}}}\right)  B_{1}\leq C_{3}\varepsilon e^{\varepsilon^{-\frac{1}{C_{2}\lambda_{2}((\lambda_{1}T)\wedge \frac{b}{2})}}}B_{1}.
\end{equation*}
Since $\varepsilon \in (0,1)$ was arbitrary, the above leads to (\ref{T8.7}) for $R_{\lambda_{2}}> C_{3}\varepsilon A_{1}$. Hence, (\ref{T8.7}) is true.
Next, we claim that
\begin{equation}\label{T8.11}
C_{3}(x_{0},b,\lambda_{1},\lambda_{2},T)\leq exp\left\lbrace 2(C_{1}+C_{2}^{-1}+1)\left( 1+\frac{\lambda_{2}^{-1}+x_{0}+\frac{b}{2}}{(\lambda_{1}T)\wedge \frac{b}{2}}\right) \right\rbrace .
\end{equation}
In fact, by some computations, we first observe that for each $s \in (0,1]$,
\begin{equation}\label{T8.12}
\Gamma(s)\leq e^{2s^{-1}}.
\end{equation}
Second, we have $\lambda_{2}\leq \frac{1}{C_{2}((\lambda_{1}T)\wedge \frac{b}{2})}$. So according to (\ref{T8.3}) and (\ref{T8.12}) with $s=C_{2}\lambda_{2}((\lambda_{1}T)\wedge \frac{b}{2})$, we get
\begin{equation*}
\begin{aligned}
C_{3}(x_{0},b,\lambda_{1},\lambda_{2},T)&\leq 1+e^{C_{1}}e^{2C_{2}^{-1}\lambda_{2}^{-1}((\lambda_{1}T)\wedge \frac{b}{2})^{-1}}\times exp(\lambda_{2}^{-1}((\lambda_{1}T)\wedge \frac{b}{2})^{-1}+\lambda_{2}(x_{0}+\frac{b}{2}))\\
&\leq e\cdotp exp\left(  C_{1}+(2C_{2}^{-1}+1)\lambda_{2}^{-1}((\lambda_{1}T)\wedge \frac{b}{2})^{-1}+C_{2}^{-1}\frac{x_{0}+\frac{b}{2}}{(\lambda_{1}T)\wedge \frac{b}{2}}\right)  .
\end{aligned}
\end{equation*}
This leads to (\ref{T8.11}). Now, by (\ref{T8.7}) and (\ref{T8.11}), we obtain
\begin{equation}\label{T8.6}
  R_{\lambda_{2}}\leq C_{4}(x_{0},b,\lambda_{1},\lambda_{2},T)(\varepsilon A_{1}+\varepsilon e^{\varepsilon^{-1-\frac{1}{C_{2}\lambda_{2}((\lambda_{1}T)\wedge \frac{b}{2})}}}B_{1}),
  \end{equation}
  where 
  $$C_{4}(x_{0},b,\lambda_{1},\lambda_{2},T):= C_{1}exp\left\lbrace 2(C_{1}+C_{2}^{-1}+1)(C_{2}+1)\left(  1+\frac{\lambda_{2}^{-1}+x_{0}+\frac{b}{2}}{(\lambda_{1}T)\wedge \frac{b}{2}}\right)  \right\rbrace .$$
  Since
   \begin{equation}\label{T8.13}
   \varepsilon e^{\varepsilon^{-1-\frac{\alpha}{\lambda_{2}((\lambda_{1}T)\wedge \frac{b}{2})}}}\leq e^{\varepsilon^{-1-\frac{\beta}{\lambda_{2}((\lambda_{1}T)\wedge \frac{b}{2})}}}~when~0<\alpha<\beta~and~\varepsilon \in (0,1),
   \end{equation}
this, along with (\ref{T8.6}), leads to (\ref{T8.1}) for $\lambda_{2} \leq \frac{1}{C_{2}((\lambda_{1}T)\wedge \frac{b}{2})}$.  
\\ \textbf{Step 3.} (\ref{T8.1}) holds for $\lambda_{2}> \frac{1}{C_{2}((\lambda_{1}T)\wedge \frac{b}{2})}$.\\
\\First, by the definition of $R_{\lambda_{2}}$, we obtain that $R_{\lambda_{2}}\leq R_{\frac{1}{C_{2}((\lambda_{1}T)\wedge \frac{b}{2})}}$. Then combining (\ref{T8.7}) and (\ref{T8.11}) (with $\lambda_{2}$ replaced by $\frac{1}{C_{2}((\lambda_{1}T)\wedge \frac{b}{2})}$), we notice that for each $\varepsilon \in (0,1)$,
\begin{equation*}
\begin{split}
R_{\lambda_{2}}&\leq exp\left\lbrace 2(C_{1}+C_{2}^{-1}+1)\left(  1+\frac{C_{2}(\lambda_{1}T)\wedge \frac{b}{2}+x_{0}+\frac{b}{2}}{(\lambda_{1}T)\wedge \frac{b}{2}}\right)  \right\rbrace (\varepsilon A_{1}+\varepsilon e^{\varepsilon^{-1}}B_{1})\\
&\leq exp\left\lbrace 2(C_{1}+C_{2}^{-1}+1)\left(  1+C_{2}+\frac{\lambda_{2}^{-1}+x_{0}+\frac{b}{2}}{(\lambda_{1}T)\wedge \frac{b}{2}}\right) \right\rbrace (\varepsilon A_{1}+\varepsilon e^{\varepsilon^{-1-\frac{1}{C_{2}\lambda_{2}((\lambda_{1}T)\wedge \frac{b}{2})}}}B_{1})
\end{split}
\end{equation*}
which, together with (\ref{T8.13}), yields (\ref{T8.1}) for $\lambda_{2}> \frac{1}{C_{2}((\lambda_{1}T)\wedge \frac{b}{2})}$.\\
\\Hence, this ends the proof of Theorem \ref{T8}.
\end{proof}

We first prove a lemma about regularity propagation property for the Schr\"{o}dinger equation, then we use this result to prove Theorem \ref{T9}.
\begin{lemma}\label{L.S.2}
Given $k\in \mathbb{N}^{+}$, there exists a constant $C(k,\nu)$ such that for any $T>0$ and $u_{0}\in C_{0}^{\infty}(\mathbb{R}^{+})$,
\begin{equation}\label{L.S.2.1}
\int_{0}^{\infty}x^{2k}|u(x,T;u_{0})|^{2}dx\leq C(k,\nu)\left( T+\frac{1}{T}\right) ^{2k}\left( \lVert u_{0}\rVert^{2}_{H^{4k}(\mathbb{R}^{+})}+\int_{0}^{\infty}x^{8k}|u_{0}|^{2}dx+\int_{0}^{\infty}\frac{1}{x^{4k}} |u_{0}|^{2}dx\right) .
\end{equation}
\end{lemma}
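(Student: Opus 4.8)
The plan is to move the weight $x^{2k}$ off the evolved solution and onto the datum, using the representation of Lemma~\ref{L3} together with the diagonalization of $H_\nu$ in Lemma~\ref{L1}, and then to bound the resulting weighted Sobolev norms of $u_0$. One first fixes $T>0$ and $u_0\in C_0^\infty(\mathbb{R}^+)$ and sets $M_Tg(x):=e^{-x^2/(4iT)}g(x)$, a unitary multiplication operator with $|M_Tg|=|g|$ and $M_Tu_0\in C_0^\infty(\mathbb{R}^+)$. By Lemma~\ref{L3}, $|u(x,T;u_0)|^2=(2T)^{-1}|F_\nu(M_Tu_0)(x/2T)|^2$, so the change of variables $z=x/2T$ gives
\[
\int_0^\infty x^{2k}|u(x,T;u_0)|^2\,dx=(2T)^{2k}\int_0^\infty z^{2k}|F_\nu(M_Tu_0)(z)|^2\,dz=(2T)^{2k}\big\langle Q^{2k}F_\nu(M_Tu_0),F_\nu(M_Tu_0)\big\rangle .
\]
Iterating Lemma~\ref{L1} yields $Q^{2k}F_\nu=F_\nu H_\nu^{\,k}$ on $C_0^\infty(\mathbb{R}^+)$, and unitarity of $F_\nu$ then gives $\int_0^\infty x^{2k}|u(x,T;u_0)|^2\,dx=(2T)^{2k}\langle H_\nu^{\,k}(M_Tu_0),M_Tu_0\rangle=(2T)^{2k}\langle \widetilde H_T^{\,k}u_0,u_0\rangle$, where $\widetilde H_T:=M_T^{-1}H_\nu M_T$. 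A direct computation of the conjugation shows $\widetilde H_T=H_\nu+\tfrac1T A+\tfrac1{T^2}B$ with $A:=-ix\partial_x-\tfrac i2$ and $B:=\tfrac14x^2$.

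Next one would expand $\widetilde H_T^{\,k}$ into ordered products of its three summands. Since $\widetilde H_T$ is homogeneous of degree $-2$ under $(x,T)\mapsto(\lambda x,\lambda^2T)$, after normal--ordering (moving all derivatives to the right) $\widetilde H_T^{\,k}u_0$ is a finite sum $\sum_{(\ell,j,b)}c_{\ell,j,b}\,T^{-b}x^{\ell}\partial_x^{\,j}u_0$, the coefficients $c_{\ell,j,b}$ depending only on $k$ and $\nu$, and the triples obeying $j,b\ge0$, $j+b\le 2k$ (a factor $H_\nu$ costs two derivatives, $A$ one, $B$ none, while $A$ and $B$ carry the negative powers of $T$) and $\ell=j+2b-2k$, so that $|\ell|\le 2k$. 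By Cauchy--Schwarz $\langle\widetilde H_T^{\,k}u_0,u_0\rangle\le\|\widetilde H_T^{\,k}u_0\|\,\|u_0\|$, and since $(2T)^{2k}T^{-b}\le 2^{2k}(T+1/T)^{2k}$ for every $0\le b\le 2k$, the lemma reduces to showing
\[
\|x^{\ell}\partial_x^{\,j}u_0\|_{L^2(\mathbb{R}^+)}^2\le C(k,\nu)\Big(\|u_0\|_{H^{4k}(\mathbb{R}^+)}^2+\int_0^\infty x^{8k}|u_0|^2\,dx+\int_0^\infty x^{-4k}|u_0|^2\,dx\Big)
\]
for every admissible $(\ell,j)$, after which an arithmetic--geometric mean step closes the argument.

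For $\ell\ge 0$ this is elementary: from $2\ell\le 4k$ one has $x^{2\ell}\le 1+x^{4k}$, so it suffices to control $\int x^{4k}|\partial_x^{\,j}u_0|^2$; integrating by parts $j$ times (no boundary terms, since $u_0\in C_0^\infty(\mathbb{R}^+)$) writes it as a finite combination of integrals $\int x^{4k-i}\partial_x^{\,2j-i}u_0\,\overline{u_0}\,dx$ with $0\le 4k-i\le 4k$ and $2j-i\le 4k$, and each such term is $\le\|u_0\|_{H^{4k}}^2+\int x^{8k}|u_0|^2$ once one uses $x^{4k-i}\le 1+x^{4k}$ and places the whole weight onto the undifferentiated factor by Cauchy--Schwarz. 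For $\ell<0$ one has $|\ell|\le 2k$ (with $j=0$ forced when $|\ell|=2k$); extending $u_0$ by zero to $\mathbb{R}$ and invoking a weighted Gagliardo--Nirenberg (Caffarelli--Kohn--Nirenberg type) interpolation inequality — legitimate precisely because $u_0$ vanishes near the origin and because the constraint $j+b\le 2k$ keeps the exponents in range — one bounds $\|x^{\ell}\partial_x^{\,j}u_0\|$ by $C\big(\|x^{-2k}u_0\|_{L^2}+\|u_0\|_{H^{4k}}\big)$, whose square is dominated by the right--hand side above.

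The main obstacle will be this singular range $\ell<0$. One must be precise about which triples $(\ell,j,b)$ actually occur in the expansion of $\widetilde H_T^{\,k}$ — the bound $j+b\le 2k$ is exactly what keeps the interpolation within the available budget of $4k$ derivatives and the weight $x^{-4k}$ — and then justify the weighted interpolation inequality for $C_0^\infty(\mathbb{R}^+)$ functions, which cannot be obtained by naively iterating the one--dimensional Hardy inequality and must exploit the vanishing of $u_0$ at $x=0$ more carefully (for instance via the Mellin transform, or complex interpolation of an analytic family of operators). The remaining ingredients — the conjugation identity for $M_T^{-1}H_\nu M_T$, the homogeneity bookkeeping, and the positive--weight estimates — are routine.
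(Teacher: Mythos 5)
Your reduction is set up correctly and agrees with the paper up to the key identity: both arguments use Lemma \ref{L3} and Lemma \ref{L1} to write $\int_0^\infty x^{2k}|u(x,T;u_0)|^2dx=(2T)^{2k}\langle H_\nu^k f,f\rangle$ with $f=e^{ix^2/4T}u_0$ (your conjugation to $\widetilde H_T=M_T^{-1}H_\nu M_T=H_\nu+\tfrac1TA+\tfrac1{T^2}B$ is a correct but cosmetic reformulation of this), and your homogeneity bookkeeping $\ell=j+2b-2k$, $j+b\le 2k$ is right. The gap is in how you then estimate this quadratic form. By using Cauchy--Schwarz in the crude form $\langle\widetilde H_T^{\,k}u_0,u_0\rangle\le\|\widetilde H_T^{\,k}u_0\|\,\|u_0\|$, you force yourself to bound the mixed singular norms $\|x^{\ell}\partial_x^{\,j}u_0\|_{L^2}$ with $\ell<0$ and $j\ge1$, and for these you only \emph{invoke} a weighted Caffarelli--Kohn--Nirenberg/Gagliardo--Nirenberg-type interpolation inequality, which you yourself flag as the main obstacle and do not prove (and, as you note, it does not follow from naive Hardy iteration). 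Since the entire lemma reduces to exactly this estimate in the singular range, the proposal as written does not establish \eqref{L.S.2.1}; the crucial analytic step is asserted, not proved.

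The paper avoids this obstacle altogether by a different placement of Cauchy--Schwarz: writing $H_\nu^k$ as a combination of monomials $x^{-r}\partial_x^{s}$ with $r+s=2k$, $r\neq1$, it pairs each monomial across the inner product, $\langle x^{-r}\partial_x^{s}f,f\rangle=\langle\partial_x^{s}f,x^{-r}f\rangle\le\|\partial_x^{s}f\|^2+\|x^{-r}f\|^2$, so the negative power only ever multiplies an \emph{undifferentiated} copy of $f$; since $|f|=|u_0|$ and $r\le 2k$, this term is directly controlled by $\int_0^\infty x^{-4k}|u_0|^2dx+\|u_0\|_{L^2}^2$, while $\|\partial_x^{s}f\|^2$ is expanded (producing the $\max\{T^{-2s},1\}$ and $\int x^{8k}|u_0|^2$ contributions) and no mixed weighted-derivative norm of $u_0$ ever appears. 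You can close your argument the same way: instead of estimating $\|\widetilde H_T^{\,k}u_0\|_{L^2}$, bound each term $T^{-b}\langle x^{\ell}\partial_x^{\,j}u_0,u_0\rangle=T^{-b}\langle\partial_x^{\,j}u_0,x^{\ell}u_0\rangle\le T^{-b}\bigl(\|\partial_x^{\,j}u_0\|^2+\|x^{\ell}u_0\|^2\bigr)$; since $j\le 2k$ and $|\ell|\le 2k$, this stays within the budget $\|u_0\|_{H^{4k}}^2+\int x^{8k}|u_0|^2+\int x^{-4k}|u_0|^2$ and no interpolation lemma is needed.
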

\begin{proof}
Fix $k\in \mathbb{N}^{+}$, $T>0$ and $u_{0}\in C_{0}^{\infty}(\mathbb{R}^{+})$.
Because of the identity (\ref{equation1.7}) and the unitary property of Hankel transform, we find that
\begin{equation}\label{L.S.2.2}
\begin{split}
 \lVert x^{k}u(x,T;u_{0})\rVert_{L^{2}(\mathbb{R}^{+})}^{2}&= \lVert x^{k}(2T)^{-\frac{1}{2}}F_{\nu}(f)(x/2T)\rVert_{L^{2}(\mathbb{R}^{+})}^{2}\\
 &=\int_{0}^{\infty}x^{2k}(2T)^{-1}|F_{\nu}(f)(x/2T)|^{2}dx\\
 &=(2T)^{2k}\int_{0}^{\infty}x^{2k}|F_{\nu}(f)(x)|^{2}dx\\
 &=(2T)^{2k}\lVert F_{\nu}x^{k}F_{\nu}(f)\rVert_{L^{2}(\mathbb{R}^{+})}^{2}\\
 &=(2T)^{2k}\left\| H_{\nu} ^{\frac{k}{2}}f\right\| _{L^{2}(\mathbb{R}^{+})}^{2},
\end{split}
\end{equation}
where $f:=e^{\frac{ix^{2}}{4T}}u_{0}$.\\
Since the operator $ H_{\nu} ^{k}$ with domain $C_{0}^{\infty}(\mathbb{R}^{+})$ is a polynomial in $\frac{1}{x}$ and $\partial_{x}$, of degree $2k$, and after some computations, we can find that the polynomial $ H_{\nu} ^{k}$ is a linear combination of the monomials
\begin{equation*}
\left\lbrace  \frac{1}{x^{r}}\partial_{x}^{s}: r+s=2k,~r,~s\in\mathbb{N},~r\neq1 \right\rbrace.
\end{equation*}
From this, we see that 
\begin{equation}\label{L.S.2.3}
\begin{split}
\int_{0}^{\infty}\left|  H_{\nu} ^{\frac{k}{2}}f\right| ^{2}dx&=\int_{0}^{\infty}\left\langle  H_{\nu} ^{k}f,f\right\rangle _{\mathbb{C}} dx \\
&\leq C(k,\nu) {\Huge \Sigma_{r+s=2k,r\neq1}}\int_{0}^{\infty}\left| \left\langle \partial_{x}^{s}f, \frac{1}{x^{r}}f\right\rangle _{\mathbb{C}}\right| dx,
\end{split}
\end{equation}
here and throughout the proof, $C(k,\nu)$ stands for a positive constant (depending only on $k$, $\nu$), which may vary in different contexts.
Then by some computations, we find that
\begin{equation}\label{L.S.2.4}
\begin{split}
\|\partial_{x}^{s}f\|_{L^{2}(\mathbb{R}^{+})}^{2}&=\int_{0}^{\infty}\left| \left\langle  \partial_{x}^{2s}f,f\right\rangle  _{\mathbb{C}}\right| dx\leq C_{1}(s)max\{T^{-2s},1\}{\Huge \Sigma_{0\leq m+n\leq 2s}}\int_{0}^{\infty}\left| \left\langle \partial_{x}^{m}u_{0}, x^{n}u_{0}\right\rangle _{\mathbb{C}}\right|dx\\
&\leq C_{2}(s)max\{T^{-2s},1\}\left( \lVert u_{0}\rVert^{2}_{H^{2s}(\mathbb{R}^{+})}+\int_{0}^{\infty}x^{4s}|u_{0}|^{2}dx\right).
\end{split}
\end{equation}
From (\ref{L.S.2.2}), (\ref{L.S.2.3}) and (\ref{L.S.2.4}), we get that
\begin{align}
\int_{0}^{\infty}x^{2k}|u(x,T;u_{0})|^{2}dx&=(2T)^{2k}\int_{0}^{\infty}\left|  H_{\nu} ^{\frac{k}{2}}f\right| ^{2}dx \notag\\
&\leq C(k,\nu)(2T)^{2k} {\Huge \Sigma_{r+s=2k,r\neq1}}\int_{0}^{\infty}\left| \left\langle \partial_{x}^{s}f, \frac{1}{x^{r}}f\right\rangle _{\mathbb{C}}\right| dx \notag\\
&\leq C(k,\nu)(2T)^{2k}{\Huge \Sigma_{r+s=2k,r\neq1}}\left(\int_{0}^{\infty}|\partial_{x}^{s}f|^{2}dx+\int_{0}^{\infty}\left| \frac{1}{x^{r}}f\right| ^{2}dx \right)\label{L.S.2.5}\\
&\leq C(k,\nu)\left( T+\frac{1}{T}\right) ^{2k}\left( \lVert u_{0}\rVert^{2}_{H^{4k}(\mathbb{R}^{+})}+\int_{0}^{\infty}x^{8k}|u_{0}|^{2}dx+\int_{0}^{\infty}\frac{1}{x^{4k}} |u_{0}|^{2}dx\right).
\end{align}
So we complete the proof.
\end{proof}
\begin{remark}
 Lemma \ref{L.S.2} gives a quantitative property for solutions of (\ref{equation1.1}) for general $\nu\geq0$. For $\nu=\frac{1}{2}$, which can be seen as free Schr\"{o}dinger equation on half line, based on the above proof process (combining \eqref{L.S.2.2} and \eqref{L.S.2.4}), the quantitative estimate takes the following form: Given $k\in \mathbb{N}^{+}$, for $u_{0}\in C_{0}^{\infty}(\mathbb{R}^{+})$, and $u(t,x)$ solves (\ref{equation1.1}) with $\nu=\frac{1}{2}$, then there exists a constant $C(k)$ such that for any $T>0$
\begin{equation*}
\int_{0}^{\infty}x^{2k}|u(x,T;u_{0})|^{2}dx\leq C(k)(1+T)^{2k}\left( \lVert u_{0}\rVert^{2}_{H^{2k}(\mathbb{R}^{+})}+\int_{0}^{\infty}x^{4k}|u_{0}|^{2}dx\right).
\end{equation*}
This estimate is consistent with the quantitative estimate given by Lemma 3.2 in \cite{WWZ} for the free Schr\"{o}dinger equation in $\mathbb{R}^{n}$. While for general $\nu\geq0$, the additional requirements for the regularity and decay of function $u_0$ come from inequality (\ref{L.S.2.5}). It is natural to wonder if the additional requirements for the regularity and decay of $u_0$, even the last term
$\int_{0}^{\infty}\frac{1}{x^{4k}} |u_{0}|^{2}dx$ can be removed. We point out that for general $\nu\geq0$, if $k=1$, it is true, i.e., we have the estimate that
\begin{equation*}
\int_{0}^{\infty}x^{2}|u(x,T;u_{0})|^{2}dx\leq C(\nu)(1+T)^{2}\left( \lVert u_{0}\rVert^{2}_{H^{2}(\mathbb{R}^{+})}+\int_{0}^{\infty}x^{4}|u_{0}|^{2}dx\right).
\end{equation*}
This dues to the fact that the equivalence of the Sobolev norms $\left\|   H_{\nu} ^{\frac{1}{2}}f\right\| _{L^{2}(\mathbb{R}^{+})}^{2} $ and $\left\|   -\Delta ^{\frac{1}{2}}f\right\| _{L^{2}(\mathbb{R}^{+})}^{2}$, but for $k\geq2$, the equivalence of these two norms is still unknown, for this topic, we refer readers to \cite{RKH, RMJZZ, TP, KM} for more knowledge.
\end{remark}
\begin{remark}
 The proof of this Lemma borrows some ideas from the proof of Lemma 3.2 in \cite{WWZ}, but there are some differences between the proof of this Lemma and the proof of Lemma 3.2. Their proof relies on the commutativity of the operators $(x_j+2i(t-T)\partial_{x_{j}})^{k}$ and $i\partial_t+\Delta$, while for the case in presence of a potential term, we don't know how to construct similar commutators to get an estimate similar to \eqref{L.S.2.1}. Thanks to the identity \eqref{equation1.7}, we can deduce the estimate \eqref{L.S.2.1}, and this approach fits to prove Lemma 3.2 in \cite{WWZ}. It may be interesting to know if this 
quantitative estimate still holds for general potential $V$, which may be not an easy question since both of the approach may be fail, and to our best knowledge, we haven't found such quantitative estimates for general potential $V$.
\end{remark}

 \begin{proof}
 [\textbf{Proof of Theorem \ref{T9}.}]
 When $u_{0}=0$, (\ref{T9.1}) holds clearly for all $\varepsilon \in (0,1)$. We now fix $u_{0}\in C_{0}^{\infty}(\mathbb{R}^{+})\backslash\{0\}$. For convenience, we define 
  \begin{equation*}
  A_{2}:=\int_{\mathbb{R}^{+}}e^{\lambda x}|u_{0}(x)|^{2}dx+\lVert u_{0}\rVert^{2}_{H^{4([\nu]+3)}(\mathbb{R}^{+})}+\int_{0}^{\infty}\frac{1}{x^{4([\nu]+3)}} |u_{0}|^{2}dx,~~~B_{2}:=\int_{B}|u(x,T;u_0)|^{2}dx,
  \end{equation*}
  $~$ $A_{3}:=\int_{\mathbb{R}^{+}}e^{\lambda x}|u(x,T;u_0)|^{2}dx.$\\
  \\The proof of (\ref{T9.1}) is divided into several steps.\\
  \\ \textbf{Step 1.} There exists $C_{0}=C_{0}(\nu)>1$ such that
  \begin{equation}\label{T9.9}
  \int_{0}^{\infty}|u_0(x)|^{2}dx\leq C_{1}(x_{0},b,\lambda,T)\frac{A_{2}}{\sqrt{ln\left( \left| ln\frac{B_{2}}{A_{2}}\right| +e\right)}}, 
  \end{equation}
  where 
  \begin{equation}\label{T9.10}
  C_{1}(x_{0},b,\lambda,T):=\left( T+\frac{1}{T}\right) ^{[\nu]+3}(1+T)^{4([\nu]+3)}e^{C_{0}^{1+ \frac{x_{0}+\frac{b}{2}+1}{(\lambda T)\wedge \frac{b}{2}}}}.
  \end{equation}
  By the conservation law for the Schr\"{o}dinger equation, and the H\"{o}lder inequality, we have\\
  \\$\int_{0}^{\infty}|u_0(x)|^{2}dx=\int_{0}^{\infty}|u(x,T;u_0)|^{2}dx$
  \begin{equation}\label{T9.0}
  \begin{split}
  &\leq\left(  \int_{0}^{\infty}(1+x)^{2\nu+2+2}|u(x,T;u_0)|^{2}dx\right) ^{1/2}\left(  \int_{0}^{\infty}(1+x)^{-(2\nu+2)-2}|u(x,T;u_0)|^{2}dx\right) ^{1/2}\\
  &\leq \left(  \int_{0}^{\infty}(1+x)^{2([\nu]+3)}|u(x,T;u_0)|^{2}dx\right) ^{1/2}\left(  \int_{0}^{\infty}(1+x)^{-(2\nu+2)-2}|u(x,T;u_0)|^{2}dx\right) ^{1/2}.
  \end{split}
  \end{equation}
  Next, the proof of (\ref{T9.9}) is organized in two parts.
  \\ \textbf{Part 1.1.} There exists positive constant $C_{2}(\nu)>1$ such that
   \begin{equation}\label{T9.2}
   \int_{0}^{\infty}(1+x)^{-(2\nu+2)-2}|u(x,T;u_0)|^{2}dx\leq C_{3}(x_{0},b,\lambda,T)\frac{1}{ln(ln\frac{A_{2}}{B_{2}}+e)}A_{2},
   \end{equation}
   where 
   \begin{equation}\label{T9.3}
   	C_{3}(x_{0},b,\lambda,T):=e^{C_{2}^{1+\frac{x_{0}+\frac{b}{2}+1}{(\lambda T)\wedge\frac{b}{2}}}}.
   \end{equation}
  By Theorem \ref{T4}(with ($x_{0}$, $x_{1}$, $\frac{a}{2}$, $\frac{b}{2}$) replaced by ($k-\frac{1}{2}$, $x_{0}$, $\frac{1}{2}$, $\frac{b}{2}$) for $k\in\mathbb{N}^{+}$), we find that 
 \begin{equation}\label{T9.5}
 \begin{split}
  \int_{0}^{\infty}(1+x)^{-(2\nu+2)-2}|u(x,T;u_0)|^{2}dx
  &\leq\sum_{k=1}^{\infty}\int_{k-1\leq x<k}k^{-(2\nu+2)-2}|u(x,T;u_0)|^{2}dx\\
  &\leq C ((\lambda T)\wedge b)^{-(2\nu+2)}\sum_{k=1}^{\infty}k^{-2}B_{2}^{\theta^{1+\frac{x_0+k+\frac{b}{2}}{(\lambda T)\wedge \frac{b}{2}}}}A_{3}^{1-\theta^{1+\frac{x_0+k+\frac{b}{2}}{(\lambda T)\wedge \frac{b}{2}}}}\\
  &\leq C((\lambda T)\wedge b)^{-(2\nu+2)}\left( \sum_{k=1}^{\infty}k^{-2}\left( \frac{B_{2}}{A_{2}}\right) ^{\theta^{1+\frac{x_0+k+\frac{b}{2}}{(\lambda T)\wedge \frac{b}{2}}}}\right) A_{2}
  \end{split}
  \end{equation}
 for some $C>0$ and $\theta\in(0,1)$ depending only on $\nu$.
 Since $u_{0}\neq0$, by the definitions of $A_{2}$ and $B_{2}$, and by the conservation law for the Schr\"{o}dinger equation we obtain $B_{2}<A_{2}$. Then by (\ref{L.R.2}) in Lemma \ref{L.R} with 
 \begin{equation*}
 (x, \theta, \varepsilon, \alpha)=(({B_{2}}/{A_{2}}) 
 ^{\theta^{1+\frac{x_{0}+\frac{b}{2}}{(\lambda T)\wedge \frac{b}{2}}}},\theta^{\frac{1}{(\lambda_{1}T)\wedge \frac{b}{2}}}, 1, \theta^{-1-\frac{x_{0}+\frac{b}{2}}{(\lambda T)\wedge \frac{b}{2}}}),
 \end{equation*}
 to get 
 \begin{equation}\label{T9.7}
 \sum_{k=1}^{\infty}k^{-2}\left( \frac{B_{2}}{A_{2}}\right) 
 ^{\theta^{1+\frac{x_{0}+k+\frac{b}{2}}{(\lambda T)\wedge \frac{b}{2}}}}
 \leq 4 \theta^{-1-\frac{x_{0}+\frac{b}{2}}{(\lambda T)\wedge \frac{b}{2}}} e^{1+e\theta^{1+\frac{x_{0}+\frac{b}{2}-1}{(\lambda T)\wedge \frac{b}{2}}}}\frac{1}{ln\left( \left| ln\frac{B_{2}}{A_{2}}\right| +e\right) }.
 \end{equation}
 Therefore,\\
 \\$\int_{0}^{\infty}(1+x)^{-(2\nu+2)-2}|u(x,T;u_0)|^{2}dx$
 \begin{equation}\label{T9.8}
 \begin{split}
&\leq 4C((\lambda T)\wedge\frac{b}{2} )^{-(2\nu+2)}\theta^{-1-\frac{x_{0}+\frac{b}{2}}{(\lambda T)\wedge \frac{b}{2}}} e^{1+e\theta^{1+\frac{x_{0}+\frac{b}{2}-1}{(\lambda T)\wedge \frac{b}{2}}}}\frac{A_{2}}{ln\left( \left| ln\frac{B_{2}}{A_{2}}\right| +e\right) }\\
&\leq 4C([2\nu+2]+1)!e^\frac{1}{(\lambda T)\wedge \frac{b}{2}}e^{\theta^{-1-\frac{x_{0}+\frac{b}{2}}{(\lambda T)\wedge \frac{b}{2}}}}e^{1+e\theta^{1+\frac{x_{0}+\frac{b}{2}-1}{(\lambda T)\wedge \frac{b}{2}}}}\frac{A_{2}}{ln\left( \left| ln\frac{B_{2}}{A_{2}}\right| +e\right) }\\
&\leq 4C([2\nu+2]+1)!e\cdot e^{(\theta^{-1}+e+1)\theta^{-2\frac{x_{0}+\frac{b}{2}+1}{(\lambda T)\wedge \frac{b}{2}}}}\frac{A_{2}}{ln\left( \left| ln\frac{B_{2}}{A_{2}}\right| +e\right) }.
\end{split}
 \end{equation}
 In the first inequality of (\ref{T9.8}), we have used (\ref{T9.5}) and (\ref{T9.7}); in the last two inequalities of (\ref{T9.8}), we have used the facts that
 \begin{equation*}
 \theta\in (0,1)~and~\left( (\lambda T)\wedge\frac{b}{2} \right) ^{-(2\nu+2)}\leq([2\nu+2]+1)!e^\frac{1}{(\lambda T)\wedge \frac{b}{2}}\leq ([2\nu+2]+1)!e^{\theta^{-2\frac{1}{(\lambda T)\wedge \frac{b}{2}}}}.
 \end{equation*}
 Since $\theta\in (0,1)$, (\ref{T9.2}) follows from (\ref{T9.8}), as also do (\ref{T9.3}), so we get Part 1.1.\\
 \\ \textbf{Part 1.2.}
  There exists a positive constant $C_{4}(\nu)$ such that 
  \begin{equation}\label{T9.01}
  \begin{split}
 \int_{0}^{\infty}(1+x)^{2([\nu]+3)}|u(x,T;u_0)|^{2}dx 
  \leq C_{4}\left( T+\frac{1}{T}\right) ^{2([\nu]+3)}(1+T)^{8([\nu]+3)}\left( 1+\left( (\lambda T)\wedge \frac{b}{2}\right) ^{-1}\right) ^{8([\nu]+3)} A_{2}.
  \end{split}
  \end{equation}
By Lemma \ref{L.S.2} (with $k=[\nu]+3$), we find that\\ 
\\$\int_{0}^{\infty}x^{2([\nu]+3)}|u(x,T;u_0)|^{2}dx$
\begin{equation*}
\leq C_{41}\left( T+\frac{1}{T}\right) ^{2([\nu]+3)}\left(\lVert u_{0}\rVert^{2}_{H^{4([\nu]+3)}(\mathbb{R}^{+})}+\int_{0}^{\infty}x^{8([\nu]+3)}|u_{0}|^{2}dx+\int_{0}^{\infty}\frac{1}{x^{4([\nu]+3)}} |u_{0}|^{2}dx \right) 
\end{equation*}
for some $C_{41}>0$ depending only on $\nu$. It follows that\\ 
\\$\int_{0}^{\infty}(1+x)^{2([\nu]+3)}|u(x,T;u_0)|^{2}dx\leq \int_{0}^{\infty}2^{2([\nu]+3)}(1+x^{2([\nu]+3)})|u(x,T;u_0)|^{2}dx$\\
\\$\leq C_{42}\left( T+\frac{1}{T}\right) ^{2([\nu]+3)}$\\
\begin{equation}\label{T9.11}
 \times\left(\int_{0}^{\infty}|u(x,T;u_0)|^{2}dx+\lVert u_{0}\rVert^{2}_{H^{4([\nu]+3)}(\mathbb{R}^{+})}+\int_{0}^{\infty}x^{8([\nu]+3)}|u_{0}|^{2}dx+\int_{0}^{\infty}\frac{1}{x^{4([\nu]+3)}} |u_{0}|^{2}dx \right)
\end{equation} 
for some $C_{42}>0$ depending only on $\nu$. Since
\begin{equation*}
(\lambda x)^{8([\nu]+3)}\leq (8( [\nu]+3)) !e^{\lambda x},~~x\in \mathbb{R}^{+},
\end{equation*}
and 
\begin{equation*}
\begin{split}
max\{1,\lambda^{-8([\nu]+3)}\}&=max\{1,(\lambda T)^{-8([\nu]+3)}T^{8([\nu]+3)}\}\\
&\leq(1+T)^{8([\nu]+3)}max\left\lbrace 1,\left( (\lambda T)\wedge \frac{b}{2}\right) ^{-8([\nu]+3)}\right\rbrace \\
&\leq (1+T)^{8([\nu]+3)}\left( 1+\left( (\lambda T)\wedge \frac{b}{2}\right) ^{-1}\right) ^{8([\nu]+3)},
\end{split}
\end{equation*}
this, along with (\ref{T9.11}), yields that\\
\\$\int_{0}^{\infty}(1+x)^{2([\nu]+3)}|u(x,T;u_0)|^{2}dx$
\begin{equation}\label{T9.12}
\begin{split}
&\leq C_{43}\left( T+\frac{1}{T}\right) ^{2([\nu]+3)} \left(\lVert u_{0}\rVert^{2}_{H^{4([\nu]+3)}(\mathbb{R}^{+})}+\int_{0}^{\infty}\lambda^{-8([\nu]+3)}e^{\lambda x}|u_{0}|^{2}dx+\int_{0}^{\infty}\frac{1}{x^{4([\nu]+3)}} |u_{0}|^{2}dx \right)\\
&\leq C_{43}\left( T+\frac{1}{T}\right) ^{2([\nu]+3)} max\{1,\lambda^{-8([\nu]+3)}\}A_{2}\\
&\leq C_{43}\left( T+\frac{1}{T}\right) ^{2([\nu]+3)}(1+T)^{8([\nu]+3)}\left( 1+\left( (\lambda T)\wedge \frac{b}{2}\right) ^{-1}\right) ^{8([\nu]+3)} A_{2}
\end{split}
\end{equation}
for some $C_{43}>0$ depending only on $\nu$.
Hence, we get Part 1.2.\\

Now, by  (\ref{T9.2}) and (\ref{T9.01}), we get\\
\\$\int_{0}^{\infty}|u_0(x)|^{2}dx
\leq \left(  \int_{0}^{\infty}(1+x)^{2([\nu]+3)}|u(x,T;u_0)|^{2}dx\right) ^{1/2}\left(  \int_{0}^{\infty}(1+x)^{-(2\nu+2)-2}|u(x,T;u_0)|^{2}dx\right) ^{1/2}$
\begin{equation}\label{T9.13}
\begin{split}
&\leq  \sqrt{C_{4}}\left( T+\frac{1}{T}\right) ^{[\nu]+3}(1+T)^{4([\nu]+3)}\left( 1+\left( (\lambda T)\wedge \frac{b}{2}\right) ^{-1}\right) ^{4([\nu]+3)}\sqrt{C_{3}(x_{0},b,\lambda,T)} \frac{A_{2}}{\sqrt{ln\left( ln\frac{A_{2}}{B_{2}} +e\right)}}\\
&\leq  \sqrt{C_{4}}\left( T+\frac{1}{T}\right) ^{[\nu]+3}(1+T)^{4([\nu]+3)}\left(4([\nu]+3)\right) !e^{1+((\lambda T)\wedge \frac{b}{2})^{-1}}\sqrt{C_{3}(x_{0},b,\lambda,T)} \frac{A_{2}}{\sqrt{ln\left( ln\frac{A_{2}}{B_{2}} +e\right)}}.
\end{split}
\end{equation}
(In the last inequality in (\ref{T9.13}), we have used $x^{4([\nu]+3)}\leq(4([\nu]+3))!e^{x}$ for all $x>0$.) Now, (\ref{T9.9}) follows from (\ref{T9.13}) and (\ref{T9.3}) at once.\\
 \\ \textbf{Step 2.} (\ref{T9.1}) holds for the above-mentioned $u_{0}$ and each $\varepsilon \in (0,1)$.\\
 It suffices to show that for each $\varepsilon \in (0,1)$,
 \begin{equation}\label{T9.14}
 A:=\int_{0}^{\infty}|u_0(x)|^{2}dx\leq C_{1}(\varepsilon A_{2}+\varepsilon e^{e^{\varepsilon^{-2}}}B_{2}),
 \end{equation}
 where $C_{1}=C_{1}(x_{0},b,\lambda,T)$ is given by (\ref{T9.10}). In fact, if $A\leq C_{1}\varepsilon A_{2}$, (\ref{T9.14}) is obvious. So we only consider the case: $A> C_{1}\varepsilon A_{2}$.
 In this case, two observations are given in order: First, since $C_{1}>1$(see (\ref{T9.10})), we deduce from the definitions of $A$ and $A_{2}$ that
\begin{equation}\label{T9.15}
 0< \varepsilon < \frac{A}{C_{1}A_{2}}<1.
\end{equation}
Since the function $x\mapsto xe^{ex^{-2}}$ is decreasing on (0,1), we see from (\ref{T9.15}) that 
\begin{equation}\label{T9.16}
\frac{A}{C_{1}A_{2}}e^{e^{\left( \frac{A}{C_{1}A_{2}}\right) ^{-2}}}\leq \varepsilon e^{e^{\varepsilon^{-2}}}.
\end{equation}
Second, since $x\mapsto e^{-e}e^{ex^{-2}}$ is decreasing on (0,1) and because its inverse is: $x\mapsto \frac{1}{\sqrt{ln\left( lnx +e\right)}}$ on $(1,\infty)$, it follows from (\ref{T9.9}) that
\begin{equation}\label{T9.17}
\frac{A_{2}}{B_{2}}\leq e^{-e}e^{e^{\left( \frac{A}{C_{1}A_{2}}\right) ^{-2}}}.
\end{equation}
Now we infer from (\ref{T9.16}) and (\ref{T9.17}) that
\begin{equation*}
A=C_{1}\frac{A}{C_{1}A_{2}}\frac{A_{2}}{B_{2}}B_{2}\leq C_{1}\left(\frac{A}{C_{1}A_{2}}e^{-e}e^{e^{(\frac{A}{C_{1}A_{2}})^{-2}}} \right)B_{2}\leq C_{1}\varepsilon e^{-e}e^{e^{\varepsilon^{-2}}}B_{2} \leq C_{1}\varepsilon e^{e^{\varepsilon^{-2}}}B_{2}.
\end{equation*}
As $\varepsilon \in (0,1)$ was arbitrary, we obtain (\ref{T9.14}). This ends the proof of (\ref{T9.1}).
\end{proof}

\section{\bf The sharpness of the main results}
The purpose of this section is to show the optimality of the inequalities established in Theorem \ref{T1} and Theorem \ref{T3}.
\subsection{The sharpness of Theorem \ref{T1}}
To show the sharpness of Theorem \ref{T1}, we establish the following theorem.
\begin{theorem}\label{T6}
\begin{itemize}
\item[(i)] Let $A=[a_{1},a_{2}]$, $B=[b_{1},b_{2}]$, $a=a_{2}-a_{1}$, $b=b_{2}-b_{1}$, and $a,b,T>0$. Then one can find a sequence $\{u_{k}\}_{k\in\mathbb{N}^{+}}\subset L^{2}(\mathbb{R}^{+})$ with 
\begin{equation}\label{T6.5}
	\int_{\mathbb{R}^{+}}|u_k(x)|^{2}dx=1
\end{equation}
such that 
\begin{equation}\label{T6.6}
	\lim_{k\rightarrow\infty}\int_{A^{c}}|u_k(x)|^{2}dx=\lim_{k\rightarrow\infty}\int_{B}|u(x,T;u_k)|^{2}dx=0.
\end{equation}
\end{itemize}
\begin{itemize}
\item[(ii)] Let $A=[a_{1},a_{2}]$, $B=[b_{1},b_{2}]$, $a=a_{2}-a_{1}$, $b=b_{2}-b_{1}$, and $S_{1}, S_{2}>0$. Then one can find a sequence $\{u_{k}\}_{k\in\mathbb{N}^{+}}\subset L^{2}(\mathbb{R}^{+})$ with 
\begin{equation}\label{T6.7}
	\int_{\mathbb{R}^{+}}|u_k(x)|^{2}dx=1
\end{equation}
such that
\begin{equation}\label{T6.8}
	\lim_{k\rightarrow\infty}\int_{A^{c}}|u(x,S_{1};u_k)|^{2}dx=\lim_{k\rightarrow\infty}\int_{0}^{S_{2}}\int_{B}|u(x,t;u_k)|^{2}dxdt=0.
\end{equation}
\item[(iii)] For each $A\subset \mathbb{R}^{+}$ with $m(A^{c})>0$, and each $T>0$, there is no constant $C>0$ such that for all $u_0\in L^{2}(\mathbb{R}^{+})$,
\begin{equation}\label{T6.1}
\int_{\mathbb{R}^{+}}|u_0(x)|^{2}dx\leq C\int_{A}|u(x,T;u_0)|^{2}dx.
\end{equation}
\end{itemize}
\end{theorem}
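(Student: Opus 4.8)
The three statements are proved by explicit constructions, all resting on the representation formula \eqref{equation1.7} (equivalently Lemma~\ref{L3}), which expresses the solution at a fixed time as a Hankel transform of the phase-modulated datum. The key point is that once the sets and the times are fixed, the Hankel kernel $\sqrt{zy}\,J_{\nu}(zy)$ only has to be controlled over a \emph{bounded} range of $(z,y)$, where it is a smooth, bounded multiplier; consequently a Hankel transform of a compactly supported smooth function collapses to an ordinary oscillatory integral whose decay is elementary. I dispose of (iii) first, since it needs only a single datum. As $m(A^{c})>0$, the rescaled set $(A/2T)^{c}$ has positive measure, so we may pick $w\in L^{2}(\mathbb{R}^{+})$ with $\|w\|_{L^{2}}=1$ and $\operatorname{supp}w\subset(A/2T)^{c}$, and set $u_{0}(y):=e^{-iy^{2}/4T}F_{\nu}(w)(y)$. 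Then $\|u_{0}\|_{L^{2}}=1$ because $F_{\nu}$ is unitary on $L^{2}(\mathbb{R}^{+})$, and since $e^{iy^{2}/4T}u_{0}=F_{\nu}(w)$ and $F_{\nu}$ is an involution (Lemma~\ref{L1}), formula \eqref{equation1.7} gives $2T\,|u(x,T;u_{0})|^{2}=|w(x/2T)|^{2}$; hence $\int_{A}|u(x,T;u_{0})|^{2}\,dx=\int_{A/2T}|w|^{2}=0$, contradicting \eqref{T6.1} for every $C>0$.

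For (i) I would use a high–frequency wave packet trapped in $A$. Fix $\psi\in C_{0}^{\infty}(\mathbb{R}^{+})$ supported in the interior of $A$ and bounded away from $0$, with $\|\psi\|_{L^{2}}=1$, and put $u_{k}(x):=e^{i\xi_{k}x}\psi(x)$ for a sequence $\xi_{k}\to+\infty$. Then $\int_{\mathbb{R}^{+}}|u_{k}|^{2}=1$ and $\operatorname{supp}u_{k}\subset A$, so $\int_{A^{c}}|u_{k}|^{2}=0$ for all $k$. For the other quantity Lemma~\ref{L3} gives
\[
\int_{B}|u(x,T;u_{k})|^{2}\,dx=\int_{B/2T}\Bigl|\int_{0}^{\infty}h_{z}(y)\,e^{i\xi_{k}y}\,dy\Bigr|^{2}\,dz,\qquad h_{z}(y):=\sqrt{zy}\,J_{\nu}(zy)\,e^{iy^{2}/4T}\psi(y).
\]
Since $B/2T$ is bounded and $y$ runs over the fixed compact set $\operatorname{supp}\psi\subset(0,\infty)$, every $h_{z}$ lies in $C_{0}^{\infty}(\mathbb{R}^{+})$ with all $y$–derivatives bounded uniformly for $z\in B/2T$ — this is read off from the Taylor series of $w\mapsto w^{1/2}J_{\nu}(w)$, the positive powers of $z$ absorbing the (at worst negative) powers of $y$. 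Integrating by parts $N$ times then yields $\bigl|\int_{0}^{\infty}h_{z}(y)e^{i\xi_{k}y}\,dy\bigr|\le C_{N}\xi_{k}^{-N}$ uniformly in $z\in B/2T$, so the displayed integral is $O(\xi_{k}^{-2N})\to0$, which is \eqref{T6.6}.

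For (ii) I would run the same wave packet, anchored at time $S_{1}$. With $\psi$ as above set $\tilde u_{k}(x):=e^{i\xi_{k}x}\psi(x)$ and $u_{k}:=e^{iS_{1}H_{\nu}}\tilde u_{k}$, so $\|u_{k}\|_{L^{2}}=1$ and $u(x,S_{1};u_{k})=\tilde u_{k}(x)$, whence $\int_{A^{c}}|u(x,S_{1};u_{k})|^{2}=0$. Since $u(\cdot,t;u_{k})=u(\cdot,t-S_{1};\tilde u_{k})$,
\[
\int_{0}^{S_{2}}\!\!\int_{B}|u(x,t;u_{k})|^{2}\,dx\,dt=\int_{-S_{1}}^{S_{2}-S_{1}}\!\!\int_{B}|u(x,s;\tilde u_{k})|^{2}\,dx\,ds .
\]
Split the $s$–integral at $|s|=\delta$: on $\{|s|<\delta\}$ the inner integral is $\le\|\tilde u_{k}\|_{L^{2}}^{2}=1$, contributing at most $2\delta$; on $\{|s|\ge\delta\}$, for $s>0$ Lemma~\ref{L3} reduces $\int_{B}|u(x,s;\tilde u_{k})|^{2}dx$ to the same oscillatory integral as in Part~(i) with $T$ replaced by $s$, while for $s<0$ one first uses $u(x,s;\tilde u_{k})=\overline{u(x,-s;\overline{\tilde u_{k}})}$ (valid because $H_{\nu}$ is real) to pass to positive time with the wave packet $\overline{\tilde u_{k}}$ of frequency $-\xi_{k}$. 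In both cases the amplitudes have $y$–derivatives bounded uniformly over $z\in B/(2|s|)$ and $|s|\ge\delta$, so $\int_{B}|u(x,s;\tilde u_{k})|^{2}dx=O(\xi_{k}^{-2N})$ uniformly in $s$, and its $ds$–integral tends to $0$. Letting $k\to\infty$ and then $\delta\to0$ gives \eqref{T6.8}.

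The crux of the whole argument, and the only nonroutine step, is the uniform non–stationary–phase bound used in (i) and (ii): one must check that $\sqrt{zy}\,J_{\nu}(zy)$, and hence $h_{z}^{(t)}(y)=\sqrt{zy}\,J_{\nu}(zy)\,e^{iy^{2}/4t}\psi(y)$, is smooth in $y$ on the fixed compact $y$–range with all $y$–derivatives bounded \emph{uniformly} in the parameters ($z$ ranging in a bounded set, $t$ bounded away from $0$), after which the decay in $\xi_{k}$ is just repeated integration by parts. Once this is in hand, parts (i) and (iii) are immediate, and part (ii) needs only the elementary splitting of the time integral together with the reality of $H_{\nu}$ for the negative-time bookkeeping.
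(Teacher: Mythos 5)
Your argument is correct, and parts (i)--(ii) take a genuinely different route from the paper, while your part (iii) is essentially the paper's proof: the paper also feeds data of the form $e^{-ix^{2}/4T}f$ into the representation formula and picks $f$ with $F_{\nu}f$ supported in $A^{c}/2T$ (your $w=F_{\nu}f$ is just a relabeling using that $F_{\nu}$ is an involution). For (i) and (ii) the paper does not use wave packets and non-stationary phase at all: it takes concentrating bumps $g_{k}(x)=k^{1/2}g(k(x-x_{0}))$ centered at the midpoint of $A$, with the quadratic phase $e^{-ix^{2}/4T}$ attached in (i) (and with $u_{k}$ defined by time-reversal so that $u(\cdot,S_{1};u_{k})=g_{k}$ in (ii)); then $\int_{A^{c}}$ tends to $0$ by concentration rather than vanishing identically, and the smallness of $\int_{B}|u(\cdot,T)|^{2}$ follows from the pointwise bound $|F_{\nu}(\text{phase}\cdot g_{k})(z)|=O(k^{-1/2})$ on the bounded set $B/2T$, which needs only the boundedness of $J_{\nu}$, not any derivative estimates; the time integral in (ii) is then handled by dominated convergence (pointwise convergence for a.e.\ $t$, dominated by the conserved $L^{2}$ mass) instead of your $\delta$-splitting. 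Your construction buys exact vanishing on $A^{c}$ and rapid $O(\xi_{k}^{-N})$ decay, at the price of the uniform smoothness bounds for the amplitude $\sqrt{zy}\,J_{\nu}(zy)e^{iy^{2}/4t}\psi(y)$; these do hold, but your justification is loosely worded --- the point is that writing $\sqrt{zy}\,J_{\nu}(zy)=z^{\nu+\frac12}y^{\nu+\frac12}\,J_{\nu}(zy)/(zy)^{\nu}$, the chain-rule powers of $z$ control the derivatives of the entire factor (exactly the paper's Lemma \ref{L}), while the possibly negative powers of $y$ are harmless because $\operatorname{supp}\psi$ is bounded away from $0$, and the derivatives of $e^{iy^{2}/4t}$ are uniform only for $|t|\geq\delta$, which your splitting correctly accounts for. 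Your conjugation identity $u(x,s;\tilde u_{k})=\overline{u(x,-s;\overline{\tilde u_{k}})}$ for negative times is a clean (indeed slightly more careful) substitute for the paper's direct use of the representation formula at negative times.
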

\begin{proof}

(i) Let $x_{0}$, $x_{1}$ be the center of $A$ and $B$. Choose
\begin{equation}\label{T6.9}
	g\in C_{0}^{\infty}({\mathbb{R}^{+}})~~such~ that~~\|g\|_{L^{2}(\mathbb{R}^{+})}=1.
\end{equation}
For each $k\in \mathbb{N}^{+}$, we set
\begin{equation}\label{T6.10}
	g_{k}(x):= k^{\frac{1}{2}}g(k(x-x_{0})),~ supp~g_{k}\subset[x_{0},\infty).
\end{equation}
We define a sequence $\{u_{k}\}\subset L^{2}(\mathbb{R}^{+})$ as follows:
\begin{equation}\label{T6.11}
	u_{k}(x):=e^{-ix^{2}/4T}g_{k}(x),~~x\in \mathbb{R}^{+},~k\in\mathbb{N}^{+}.
\end{equation}
Two observations are in order: First, by (\ref{T6.10}) and (\ref{T6.11}),
\begin{equation*}
	\begin{split}
\lim_{k\rightarrow \infty}\int_{A^{c}}|u_{k}(x)|^{2}dx	
=\lim_{k\rightarrow \infty}\int_{\frac{ak}{2}}^{\infty}|g(x)|^{2}dx=0;
	\end{split}
\end{equation*}
second, from (\ref{T6.9})-(\ref{T6.11}), we see that
\begin{equation*}
	\begin{split}
	\int_{\mathbb{R}^{+}}|u_{k}(x)|^{2}dx	
		=\int_{\mathbb{R}^{+}}|g_{k}(x)|^{2}dx==\int_{\mathbb{R}^{+}}|g(x)|^{2}dx=1,~~for~all~k\in \mathbb{N}^{+}.
	\end{split}
\end{equation*}
Next, it suffices to prove $\int_{B}|u(x,T;u_k)|^{2}dx$ goes to zero as $k\rightarrow \infty$.
By (\ref{L3.1}), we see that for each $k\in\mathbb{N}^{+}$,
\begin{equation}\label{T6.12}
	u(x,T;u_{k}):=(2T)^{-\frac{1}{2}}e^{-\frac{i}{2}(\nu+1)\pi}e^{ix^{2}/4T}F_{\nu}(g_{k})(x/2T),~~x\in \mathbb{R}^{+}.
\end{equation}
Meanwhile, from (\ref{T6.10}), it follows that a.e. $x\in \mathbb{R}^{+}$,
\begin{equation*}
\begin{split}
	F_{\nu}(g_{k}(y))(x)&=\int_{0}^{\infty}\sqrt{xy}J_{\nu}(xy)g_{k}(y)dy\\
	&=\int_{x_{0}}^{\infty}\sqrt{xy}J_{\nu}(xy)k^{\frac{1}{2}}g(k(y-x_{0}))dy\\
	&=\int_{0}^{\infty}\sqrt{x(x_{0}+y')}J_{\nu}(x(x_{0}+y'))k^{\frac{1}{2}}g(ky')dy'\\
	&=k^{-\frac{1}{2}}\int_{0}^{\infty}\sqrt{x\left( x_{0}+\frac{y}{k}\right) }J_{\nu}\left( x\left( x_{0}+\frac{y}{k}\right) \right) g(y)dy.\\
\end{split}
\end{equation*}
This, along with (\ref{T6.12}), (\ref{T6.9}) and when $x\in \mathbb{R}^{+}$, $|J_{\nu}(x)|\leq C_{\nu} $ where $C_{\nu}$ is a constant that only depends on $\nu$, show that
\begin{equation*}
	\begin{split}
		\int_{B}|u(x,T;u_k)|^{2}dx	
		&\leq |B| sup_{x\in B}|u(x,T;u_k)|^{2}\\
		&\leq|B|C_{\nu}^{2}\left( (2T)^{-\frac{1}{2}}k^{-\frac{1}{2}}\int_{0}^{\infty}\sqrt{\frac{x_{1}+\frac{b}{2}}{2T}\left( x_{0}+\frac{y}{k}\right) }|g(y)|dy\right) ^{2}\\
		&\leq|B|C_{\nu}^{2}\left( (2T)^{-\frac{1}{2}}k^{-\frac{1}{2}}\left( \int_{0}^{\infty}\sqrt{\frac{x_{1}+\frac{b}{2}}{2T}x_{0}}|g(y)|dy+\int_{0}^{\infty}\sqrt{\frac{x_{1}+\frac{b}{2}}{2T}\frac{y}{k}}|g(y)|dy\right) \right) ^{2}\\
		&\leq 2|B|C_{\nu}^{2}(2T)^{-1}k^{-1}\left( \int_{0}^{\infty}\sqrt{\frac{x_{1}+\frac{b}{2}}{2T}x_{0}}|g(y)|dy\right)^{2}\\	
		&~~~+2|B|C_{\nu}^{2}(2T)^{-1}k^{-2}\left( \int_{0}^{\infty}\sqrt{\frac{x_{1}+\frac{b}{2}}{2T}y}|g(y)|dy\right)^{2},
	\end{split}
\end{equation*}
which implies that 
\begin{equation*}
\lim_{k\rightarrow \infty}\int_{B}|u(x,T;u_{k})|^{2}dx=0.
\end{equation*}
Now, from the above proof, we get (\ref{T6.5}) and (\ref{T6.6}).

(ii) Let $g$ and $g_{k}$, with $k\in\mathbb{N}^{+}$, satisfy (\ref{T6.9}) and (\ref{T6.10}), respectively. Since the Schr\"{o}dinger equation is time-reversible, we can find a sequence $\{u_{k}\}\subset L^{2}(\mathbb{R}^{+})$ such that
\begin{equation}\label{T6.13}
v_{k}(x):=u(x,S_{1};u_{k})=g_{k}(x),~~~x\in\mathbb{R}^{+},~k\in\mathbb{N}^{+}.
\end{equation}
By (\ref{T6.13}), (\ref{T6.9}) and (\ref{T6.10}), we have the following two observations: First, we notice that
\begin{equation}\label{T6.14}
	\lim_{k\rightarrow \infty}\int_{A^{c}}|v_{k}(x)|^{2}dx	
	=\lim_{k\rightarrow \infty}\int_{[0,\frac{ak}{2}]^{c}}|g(x)|^{2}dx=0.
\end{equation}
Second, we have
\begin{equation}\label{T6.15}
\int_{\mathbb{R}^{+}}|v_{k}(x)|^{2}dx	
=\int_{\mathbb{R}^{+}}|g_{k}(x)|^{2}dx=1~~for~all~k\in\mathbb{N}^{+}.
\end{equation}
Then by (\ref{L3.1}) and (\ref{T6.13}), for each $k\in\mathbb{N}^{+}$,
\begin{equation}\label{T6.16}
	u(x,t;v_{k})=(2t)^{-\frac{1}{2}}e^{-\frac{i(\nu+1)\pi}{2}}e^{ix^{2}/4t}F_\nu(e^{iy^{2}/4t}g_{k}(y))(x/2t),~~(x,t)\in\mathbb{R}^{+}\times (\mathbb{R}\backslash\{0\}).
\end{equation}
In a similar way to (i), we can get 
\begin{equation}\label{T6.17}
	\lim_{k\rightarrow \infty}\int_{B}|u(x,t;v_{k})|^{2}dx	
	=0.
\end{equation}
At the same time, by the conservation law for the Schr\"{o}dinger equation and (\ref{T6.15}), we find that for all $k\in\mathbb{N}^{+}$ and $t\in \mathbb{R}\backslash\{0\}$,
\begin{equation*}
	\int_{B}|u(x,t;v_{k})|^{2}dx\leq \int_{\mathbb{R}^{+}}|u(x,t;v_{k})|^{2}dx=\int_{\mathbb{R}^{+}}|v_{k}(x)|^{2}dx=1.
\end{equation*}
Hence by (\ref{T6.17}), we can apply the Lebesgue dominated convergence theorem to get 
\begin{equation}\label{T6.18}
	\lim_{k\rightarrow \infty}\int_{-S_{1}}^{S_{2}-S_{1}}\int_{B}|u(x,t;v_{k})|^{2}dxdt=0.
\end{equation}
Since $v_{k}(x)=u(x,S_{1};u_{k})$, $x\in \mathbb{R}^{+}$(see (\ref{T6.13})), by (\ref{T6.14}) and (\ref{T6.15}) and (\ref{T6.18}), one can directly check that the above-mentioned sequence $\{u_{k}\}$ satisfies (\ref{T6.7}) and (\ref{T6.8}). This ends the proof of (ii).

(iii) For a contradiction, suppose that (iii) is not true. Then there exist $A_{0}\subset \mathbb{R}^{+}$ with $m(A_{0}^{c})>0$ and $C_{1}$, $T>0$ such that 
\begin{equation}\label{T6.4}
\int_{\mathbb{R}^{+}}|u_0(x)|^{2}dx\leq C_{1}\int_{A_{0}}|u(x,T;u_0)|^{2}dx~~~~for~all~u_0\in L^{2}(\mathbb{R}^{+}).
\end{equation}
we define 
\begin{equation}\label{T6.2}
	u_{T,f}(x):=e^{-ix^{2}/4T}f(x),~~~~x\in \mathbb{R}^{+}.
\end{equation}
By (\ref{T6.2}), (\ref{T6.4}) and (\ref{L3.1}), we notice that for each $f\in L^{2}(\mathbb{R}^{+})$,
\begin{equation*}
\begin{split}
\int_{\mathbb{R}^{+}}|F_\nu(f)|^{2}dx &=\int_{\mathbb{R}^{+}}|f|^{2}dx=\int_{\mathbb{R}^{+}}|u_{T,f}(x)|^{2}dx\\
&\leq C_{1}\int_{A_{0}}|u(x,T;u_{T,f})|^{2}dx=C_{1}\int_{A_{0}\text/2T}|F_\nu(f)|^{2}dx.
\end{split}
\end{equation*}
Since $|A_{0}^{c}|>0$, by taking $f\in L^{2}(\mathbb{R}^{+})\backslash \{0\}$ with supp $F_\nu(f) \subset A_{0}^{c}/2T$ in the above inequality, we are led to a contradiction.\par
\end{proof}

\subsection{The sharpness of Theorem \ref{T3}}
Next, we are going to establish Theorem \ref{T7} below, which shows the sharpness of Theorem \ref{T3}. Before, we need a lemma. In the proof of this lemma, we borrow some ideas from \cite{LM}.
\begin{lemma}\label{L.S.1}
Let $A \subset \mathbb{R}^{+}$ be measurable set such that $0\notin A$ and $dist(0,A)>0$. For each $\nu \geq 0$, there exist $C_{0}>0$ and $N_{0}>0$ such that $\forall N\geq N_{0}$, $\exists f\in L^{2}(\mathbb{R}^{+}) $ with supp$F_{\nu}f\subset[0,N]$ and
\begin{equation}\label{L.S.1.1}
N^{\frac{m}{2}}\lVert f\rVert_{L^{2}(\mathbb{R}^{+})}\geq e^{C_{0}N}\lVert f\rVert_{L^{2}(A)},
\end{equation}
where $m$ is the smallest integer for $\nu-m\leq0$.
\end{lemma}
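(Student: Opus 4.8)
\textbf{Proof proposal for Lemma \ref{L.S.1}.}

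The plan is to construct $f$ explicitly in terms of the modified Hankel transform $H_\nu$ rather than $F_\nu$ directly, exploiting the relation \eqref{Sec2.1}, $F_\nu(f)(x)=x^{\nu+\frac12}H_\nu(x^{-\nu-\frac12}f)$. Via the substitution $g=x^{-\nu-\frac12}f$, the constraint supp $F_\nu f\subset[0,N]$ becomes supp $H_\nu g\subset[0,N]$, and the $L^2(\mathbb{R}^+)$-norm of $f$ becomes the $L^2_\nu$-norm of $g$; so it suffices to produce $g\in L^2_\nu(\mathbb{R}^+)$ with Hankel spectrum in $[0,N]$ that is extremely small on $A$ relative to its global norm. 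The natural candidate is to take $H_\nu g = \varphi_N$, a bump-like profile supported in, say, $[N/2,N]$, so that $g(x)=\int_0^\infty (xy)^{-\nu}J_\nu(xy)\varphi_N(y)\,d\mu_\nu(y)$. Because the set $A$ is bounded away from $0$, say $A\subset[\delta,\infty)$ with $\delta=\operatorname{dist}(0,A)>0$, and because $g$ is (up to the $(xy)^{-\nu}$ normalization) a band-limited function of exponential type $\sim N$ in the Paley–Wiener sense for the Hankel transform, the mass of $g$ on $[\delta,\infty)$ should be controllable by $e^{-cN}$ times its total mass once one also inserts a polynomial gain $N^{m/2}$ to absorb the $(xy)^{-\nu}$ factor and the measure $x^{2\nu+1}$ near the origin (this is exactly why $m$, the smallest integer with $\nu-m\le 0$, enters).

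The key steps I would carry out, in order, are: (1) reduce from $F_\nu$ to $H_\nu$ via \eqref{Sec2.1} and the unitary weight $M_\nu$ as in the proof of Lemma \ref{L4}; (2) choose $g$ via $H_\nu g=\varphi_N$ with $\varphi_N$ a scaled fixed profile $\varphi_N(y)=\psi(y/N)$, $\psi\in C_0^\infty((\tfrac12,1))$, so that $\lVert g\rVert_{L^2_\nu}=\lVert \varphi_N\rVert_{L^2_\nu}\sim N^{\nu+1}$ by the $H_\nu$-isometry and a scaling computation; (3) get pointwise/$L^2$ decay of $g$ on $[\delta,\infty)$: using the Poisson representation of $J_\nu(z)/z^\nu$ (as in Lemma \ref{L}) one writes $(xy)^{-\nu}J_\nu(xy)$ as a superposition of $e^{ixys}(1-s^2)^{\nu-1/2}$ over $s\in[-1,1]$, then deforms the $s$-contour (or integrates by parts many times in $y$, using that $\varphi_N$ and all its derivatives vanish at the endpoints) to produce a factor like $(Nx)^{-K}$ for arbitrary $K$, which on $x\ge\delta$ is $\le (N\delta)^{-K}$; summing/estimating gives $\lVert g\rVert_{L^2_\nu(A)}\le C_K N^{-K}\lVert \varphi_N\rVert_\ast$ — but to beat this into a clean exponential one instead shifts the contour in the complex $s$-plane into $|\operatorname{Im} s|\le 1$ to gain $e^{-cNx}$ directly (this is the Paley–Wiener-type estimate), yielding $\lVert g\rVert_{L^2_\nu(A)}\le C e^{-c\delta N}\lVert g\rVert_{L^2_\nu(\mathbb{R}^+)}$; (4) translate back: with $f=x^{\nu+\frac12}g$ one has supp $F_\nu f\subset[0,N]$, $\lVert f\rVert_{L^2(\mathbb{R}^+)}=\lVert g\rVert_{L^2_\nu}$, and $\lVert f\rVert_{L^2(A)}=\lVert g\rVert_{L^2_\nu(A)}$ since the weight is the same on both sides; choosing $C_0<c\delta$ and $N_0$ large enough that the polynomial prefactor $N^{m/2}$ dominates the leftover constants gives \eqref{L.S.1.1}.

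The main obstacle I anticipate is step (3): making the exponential smallness $e^{-cN}$ genuinely uniform over the \emph{whole} set $A$ (which may be unbounded) and controlling the behavior of $g$ near $x=0$, where the factor $(xy)^{-\nu}$ is singular and the measure $x^{2\nu+1}\,dx$ is degenerate — this is precisely what forces the polynomial correction $N^{m/2}$ and the hypothesis $0\notin A$, $\operatorname{dist}(0,A)>0$. One must be careful that the contour-shift argument for $J_\nu(z)/z^\nu$ is valid for all $\nu\ge 0$ (the Poisson kernel $(1-s^2)^{\nu-1/2}$ is integrable, but its analytic continuation and the endpoint behavior need the $m$ integrations by parts to be legitimate, which is why $m$ is defined as the smallest integer with $\nu\le m$). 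A secondary technical point is verifying $g\in L^2(\mathbb{R}^+)$ and the Paley–Wiener characterization in the form needed — this can be quoted from the $H_\nu$ theory (e.g.\ via \cite{GJ} or standard Hankel-transform references) rather than reproved. Modulo these points the estimate is a routine exponential-type bound, so I would present step (3) carefully and keep the rest brief.
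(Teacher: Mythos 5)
Your reduction steps (1), (2), (4) are fine, but the heart of the argument, step (3), does not work, and it is exactly where the exponential factor $e^{C_0N}$ must come from. If $H_\nu g=\varphi_N$ with $\varphi_N(y)=\psi(y/N)$, $\psi\in C_0^\infty((\tfrac12,1))$ a \emph{fixed smooth bump}, then $g$ cannot satisfy any bound of the form $|g(x)|\lesssim e^{-cNx}$, nor even $\lVert g\rVert_{L^2_\nu(A)}\leq Ce^{-c\delta N}\lVert g\rVert_{L^2_\nu(\mathbb{R}^+)}$ for this particular $g$. Pointwise, exponential decay of $g$ would make its Hankel transform analytic in a complex neighbourhood of $(0,\infty)$ (the kernel $J_\nu(z)/z^\nu$ is entire with $e^{|\mathrm{Im}\,z|}$ growth), and an analytic function vanishing on $(N,\infty)$ vanishes identically — the usual Paley–Wiener obstruction; so a nonzero band-limited $g$ is never exponentially decaying. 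Concretely, the proposed contour shift in the Poisson representation fails because the contour is pinned at the endpoints $s=\pm1$, where $\mathrm{Im}\,s=0$; the endpoint contributions are precisely what give the oscillatory asymptotics $J_\nu(xy)\sim (xy)^{-1/2}\cos(\cdots)$, i.e.\ only algebraic decay for real $x,y$. What your construction actually yields is the super-polynomial bound you mention first ($(N\delta)^{-K}$ for every $K$, with $K$-dependent constants, essentially $\lVert g\rVert_{L^2(A)}\sim$ a tail of $\widehat\psi$ at distance $N\delta$), and since $\widehat\psi$ of a nonzero $C_0^\infty$ function never decays exponentially, this falls short of \eqref{L.S.1.1}: you get $N^{-K}$, not $e^{-C_0N}$. (Exponentially concentrated band-limited functions do exist — this is the prolate-spheroidal/Slepian–Landau phenomenon — but producing one requires a genuinely different construction, not a fixed smooth profile, and you do not invoke that theory.)

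The paper sidesteps this by \emph{not} prescribing a smooth compactly supported spectrum. It sets $F_\nu(f)(y)=\frac{y^{\nu+1/2}}{(N/2)^{\nu+1}}e^{-y^2/N}\mathbf{1}_{\{y\leq N\}}(y)$, i.e.\ a truncated Gaussian. Then $f=\phi_{N/4}-R$, where $\phi_{N/4}(x)=x^{\nu+1/2}e^{-Nx^2/4}$ has genuine Gaussian spatial decay, so $\lVert\phi_{N/4}\rVert_{L^2(A)}^2\lesssim N^{-1}e^{-Nd_0^2/2}$ using only $A\subset[d_0,\infty)$, while the truncation error $R$ is controlled globally by Plancherel, $\lVert R\rVert_{L^2(\mathbb{R}^+)}^2\lesssim N^{-1}e^{-2N}$, because the Gaussian frequency tail beyond $N$ is itself exponentially small; both pieces are $e^{-cN}$-small on $A$ even though $F_\nu f$ is compactly supported. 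The exponent $m$ then enters not through the behaviour of the kernel near the origin, as you suggest, but through the lower bound $\lVert f\rVert_{L^2(\mathbb{R}^+)}\gtrsim N^{-(m+1)/2}$, obtained by integrating $\int_0^N y^{2\nu+1}N^{-2\nu-2}e^{-2y^2/N}\,dy$ by parts until the power of $y$ drops below zero. If you want to salvage your route, replace the bump $\psi(y/N)$ by this truncated Gaussian (or any analytic profile with exponentially small tail past $N$) and estimate the truncation error in $L^2$ rather than aiming for pointwise exponential decay.
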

\begin{proof}
Let 
\begin{equation}\label{L.S.1.6}
	d_{0}:=dist(0,A)>0.
\end{equation}
 We consider $\phi_{s}=x^{\nu+\frac{1}{2}}e^{-sx^{2}}$, for $s>0$ and $x\in\mathbb{R}^{+}$, whose Hankel transform is given by 
\begin{equation}\label{L.S.1.3}
F_{\nu}(\phi_{s})(y)=\frac{y^{\nu+\frac{1}{2}}}{(2s)^{\nu+1}}e^{-\frac{y^{2}}{4s}},~~s>0,~y\in\mathbb{R}^{+}.
\end{equation}
We define $f\in L^{2}(\mathbb{R}^{+})$ by its Hankel transform as follows
\begin{equation*}
	F_{\nu}(f)(y)=\frac{y^{\nu+\frac{1}{2}}}{\left( \frac{N}{2}\right) ^{\nu+1}}e^{-\frac{y^{2}}{N}}1_{\{y\leq N\}}(y),~~y\in\mathbb{R}^{+},~N>0.
\end{equation*}
We first give an estimate of the $L^{2}$-norm of $f$ over the whole domain $\mathbb{R}^{+}$; we have
\begin{equation}\label{L.S.1.4}
	\lVert f\rVert_{L^{2}(\mathbb{R}^{+})}\gtrsim N^{-\frac{m+1}{2}},
\end{equation}
where $m$ is the smallest integer for $\nu-m\leq0$.\\
In fact, with the unitary of $F_{\nu}$ and the formula of integration by parts, we write
\begin{equation*}
\begin{split}
	\lVert f\rVert_{L^{2}(\mathbb{R}^{+})}^{2}&=\lVert F_{\nu}(f)\rVert_{L^{2}(\mathbb{R}^{+})}^{2}=\int_{0}^{N}\left( \frac{y^{\nu+\frac{1}{2}}}{\left( \frac{N}{2}\right) ^{\nu+1}}\right) ^{2}e^{-\frac{2y^{2}}{N}}dy\\
	&=\frac{2^{2\nu+1}}{N^{2\nu+2}}\left( -\frac{N}{2}e^{-2N}N^{2\nu}-\left( \frac{N}{2}\right) ^{2}\nu e^{-2N}N^{2(\nu-1)}-...\right.\\
	&-\left( \frac{N}{2}\right) ^{m}\nu(\nu-1)...(\nu-(m-2)) e^{-2N}N^{2(\nu-m+1)}\\
	&\left.+\left( \frac{N}{2}\right) ^{m}\nu(\nu-1)...(\nu-(m-1)) \int_{0}^{N^{2}}e^{-\frac{2x}{N}}x^{\nu-m}dx\right) .
\end{split}
\end{equation*}
Then, with the property of $m$ we obtain
\begin{equation*}
\begin{split}
		\lVert f\rVert_{L^{2}(\mathbb{R}^{+})}^{2}
		&\geq\frac{2^{2\nu+1}}{N^{2\nu+2}}\left( -\frac{N}{2}e^{-2N}N^{2\nu}-\left( \frac{N}{2}\right) ^{2}\nu e^{-2N}N^{2(\nu-1)}-...\right.\\
		&-\left( \frac{N}{2}\right) ^{m}\nu(\nu-1)...(\nu-(m-2)) e^{-2N}N^{2(\nu-m+1)}\\
		&\left.+\left( \frac{N}{2}\right) ^{m}\nu(\nu-1)...(\nu-(m-1))N^{2(\nu-m)} \int_{0}^{N^{2}}e^{-\frac{2x}{N}}dx\right) \\
		&=\frac{2^{2\nu+1}}{N^{2\nu+2}}\left( \left( \frac{N}{2}\right) ^{m}\nu(\nu-1)...(\nu-(m-1))N^{2(\nu-m)}\frac{N}{2}\right.\\
		&-\frac{N}{2}e^{-2N}N^{2\nu}-\left( \frac{N}{2}\right) ^{2}\nu e^{-2N}N^{2(\nu-1)}-...\\
		&-\left( \frac{N}{2}\right) ^{m}\nu(\nu-1)...(\nu-(m-2)) e^{-2N}N^{2(\nu-m+1)}\\
		&\left.-\left( \frac{N}{2}\right) ^{m}\nu(\nu-1)...(\nu-(m-1))N^{2(\nu-m)}\frac{N}{2}e^{-2N}\right) \\
		&\gtrsim N^{-m-1},
\end{split}
\end{equation*}
by choosing $N$ sufficiently large.
We now wish to estimate the $L^{2}$-norm of $f$ over the subset $A$. The inverse Hankel transformation gives 
\begin{equation}\label{L.S.1.5}
	\begin{split}
		f(x)&=\int_{0}^{\infty}\sqrt{xy}J_{\nu}(xy)F_{\nu}f(y)dy\\
		&=\int_{0}^{N}\sqrt{xy}J_{\nu}(xy)\frac{y^{\nu+\frac{1}{2}}}{\left( \frac{N}{2}\right) ^{\nu+1}}e^{-\frac{y^{2}}{N}}dy\\
		&=\int_{0}^{\infty}\sqrt{xy}J_{\nu}(xy)\frac{y^{\nu+\frac{1}{2}}}{\left( \frac{N}{2}\right) ^{\nu+1}}e^{-\frac{y^{2}}{N}}dy-\int_{N}^{\infty}\sqrt{xy}J_{\nu}(xy)\frac{y^{\nu+\frac{1}{2}}}{\left( \frac{N}{2}\right) ^{\nu+1}}e^{-\frac{y^{2}}{N}}dy\\
		&=\phi_{s=\frac{N}{4}}(x)-R(x).
	\end{split}
\end{equation}
For the first term in (\ref{L.S.1.5}), by (\ref{L.S.1.6}) and some computations similar to the estimate of $	\lVert f\rVert_{L^{2}(\mathbb{R}^{+})}^{2}$, we get
\begin{equation*}
\begin{split}
\lVert \phi_{s=\frac{N}{4}}\rVert_{L^{2}(A)}^{2}&\leq \int_{x>d_{0}}\left( \phi_{s=\frac{N}{4}}(x)\right) ^{2}dx=\int_{x>d_{0}}(x^{\nu+\frac{1}{2}}e^{-\frac{N}{4}x^{2}})^{2}dx\\
&\lesssim e^{-\frac{N}{2}d_{0}^{2}}N^{-1}.
\end{split}
\end{equation*}
 For the second term in (\ref{L.S.1.5}),  the unitary property of Hankel transform and some computations give
\begin{equation*}
	\begin{split}
		\lVert R\rVert_{L^{2}(A)}^{2}&\leq 	\lVert R\rVert_{L^{2}(\mathbb{R}^{+})}^{2}=\int_{N}^{\infty}\left( \frac{y^{\nu+\frac{1}{2}}}{\left( \frac{N}{2}\right) ^{\nu+1}}\right) ^{2}e^{-\frac{2y^{2}}{N}}dy\\
		&\lesssim e^{-2N}N^{-1}.
	\end{split}
\end{equation*} 
 Setting $C_{1}=min(1,d_{0}^{2}/4)$, we thus obtain
\begin{equation}\label{L.S.1.7}
\lVert f\rVert_{L^{2}(A)}\lesssim N^{-\frac{1}{2}}e^{-C_{1}N}.
\end{equation}
We conclude the proof with (\ref{L.S.1.4}), (\ref{L.S.1.7}) and by choosing $C_{0}$ such that $0<C_{0}<C_{1}$.
\end{proof}

\begin{theorem}\label{T7}
\begin{itemize}
\item[(i)] For each $A=[a_{1},a_{2}] \subset \mathbb{R}^{+}$,  $a=a_{2}-a_{1}$, and $a, \lambda, T>0$, then one can find a sequence $\{u_{k}\}_{k\in\mathbb{N}^{+}}\subset C_{0}^{\infty}(\mathbb{R}^{+})$ and $M>0$ such that
\begin{equation}\label{T6.4.1}
	\int_{\mathbb{R}^{+}}e^{\lambda x}|u_k(x)|^{2}dx\leq M~and~\int_{\mathbb{R}^{+}}|u_k(x)|^{2}dx=1
\end{equation}
and
\begin{equation}\label{T6.4.2}
\lim_{k\rightarrow\infty}\int_{A}|u(x,T;u_k)|^{2}dx=0.
\end{equation}
\end{itemize}
\begin{itemize}
\item[(ii)] Let $\lambda,b>0$ and $\alpha(s)$, $s\in \mathbb{R}^{+}$, be an increasing function with $\lim_{s\rightarrow\infty}\alpha(s)/s=0$. Then for each $\gamma \in (0,1)$, there is no positive constant $C$ such that for all $u_{0}\in C_{0}^{\infty}(\mathbb{R}^{+})$,
\begin{equation}\label{T7.1}
\int_{\mathbb{R}^{+}}|u_0(x)|^{2}dx\leq C\left( \int_{[0,b]^c}|u(x,T;u_0)|^{2}dx\right) ^{\gamma}\left( \int_{\mathbb{R}^{+}}e^{\lambda \alpha(x)}|u_0(x)|^{2}dx\right) ^{1-\gamma}.
\end{equation}
\end{itemize}
\end{theorem}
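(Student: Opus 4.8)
The two parts will be handled separately; both reduce, via the representation identity \eqref{L3.1}, to statements about the Hankel transform $F_\nu$.

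\textit{Part (i).} This is a minor variant of the construction in the proof of Theorem \ref{T6}(i). Pick any $x_0>0$ and $g\in C_0^\infty(\mathbb{R}^+)$ with $\|g\|_{L^2(\mathbb{R}^+)}=1$; write $g_2:=\sup(\mathrm{supp}\,g)<\infty$. For $k\in\mathbb{N}^+$ put $g_k(x):=k^{1/2}g(k(x-x_0))$, so that $\mathrm{supp}\,g_k\subset(x_0,\infty)$, and set $u_k(x):=e^{-ix^2/4T}g_k(x)\in C_0^\infty(\mathbb{R}^+)$. A change of variables gives $\int_{\mathbb{R}^+}|u_k|^2=\|g\|_{L^2}^2=1$ and $\int_{\mathbb{R}^+}e^{\lambda x}|u_k(x)|^2\,dx=\int_{\mathbb{R}^+}e^{\lambda(x_0+w/k)}|g(w)|^2\,dw\leq e^{\lambda(x_0+g_2)}=:M$ for all $k\geq1$, which is \eqref{T6.4.1}. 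By \eqref{L3.1} (exactly as in \eqref{T6.12}) we have $u(x,T;u_k)=(2T)^{-1/2}e^{-i(\nu+1)\pi/2}e^{ix^2/4T}F_\nu(g_k)(x/2T)$, so the substitution $z=x/(2T)$ yields
\[
\int_A|u(x,T;u_k)|^2\,dx=\int_{A/2T}|F_\nu(g_k)(z)|^2\,dz ,
\]
while the substitution $y=x_0+w/k$ gives $F_\nu(g_k)(z)=k^{-1/2}\int_{\mathbb{R}^+}\sqrt{z(x_0+w/k)}\,J_\nu\bigl(z(x_0+w/k)\bigr)g(w)\,dw$. For $z$ in the bounded set $A/2T$ and $w\in\mathrm{supp}\,g$ the argument $z(x_0+w/k)$ stays in a fixed bounded interval $[0,R]$ on which $\sqrt{u}\,J_\nu(u)$ is bounded; hence $|F_\nu(g_k)(z)|\leq Ck^{-1/2}$ uniformly in $z\in A/2T$, and therefore $\int_A|u(x,T;u_k)|^2\,dx\leq C|A/2T|\,k^{-1}\to0$, which is \eqref{T6.4.2}.

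\textit{Part (ii): reduction.} Assume, for contradiction, that for some $\gamma\in(0,1)$ inequality \eqref{T7.1} holds for every $u_0\in C_0^\infty(\mathbb{R}^+)$. I first extend it to all $u_0\in L^2(\mathbb{R}^+)$ with compact support. If $\mathrm{supp}\,u_0\subset[0,R_0]$, approximate $u_0$ in $L^2$ by $\varphi_j\in C_0^\infty(\mathbb{R}^+)$ supported in $(0,R_0+1]$. The maps $u_0\mapsto\|u_0\|_{L^2}^2$ and $u_0\mapsto\int_{[0,b]^c}|u(x,T;u_0)|^2\,dx$ are $L^2$-continuous (the latter because $e^{-iTH_\nu}$ is unitary), while $u_0\mapsto\int_{\mathbb{R}^+}e^{\lambda\alpha(x)}|u_0|^2\,dx$ is continuous on functions supported in $(0,R_0+1]$ because $\alpha$, being increasing, is bounded above on $(0,R_0+1]$, so the weight $e^{\lambda\alpha(x)}$ is bounded there; hence \eqref{T7.1} passes to the limit.

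\textit{Part (ii): the construction.} Apply Lemma \ref{L.S.1} with $A=[0,b]^c=(b,\infty)$, which satisfies $0\notin A$ and $\mathrm{dist}(0,A)=b>0$: there are $C_0,N_0>0$ so that for every $N\geq N_0$ there is $f_N\in L^2(\mathbb{R}^+)$ with $\mathrm{supp}\,F_\nu(f_N)\subset[0,N]$ which, after normalizing so that $\|f_N\|_{L^2(\mathbb{R}^+)}=1$, satisfies $\|f_N\|_{L^2([0,b]^c)}\leq N^{m/2}e^{-C_0N}$, where $m$ is the least integer with $\nu-m\leq0$. Set
\[
u_0^{(N)}(y):=(2T)^{-1/2}e^{i(\nu+1)\pi/2}e^{-iy^2/4T}F_\nu(f_N)(y/2T),
\]
which has compact support in $[0,2TN]$ and, by unitarity of $F_\nu$, satisfies $\|u_0^{(N)}\|_{L^2(\mathbb{R}^+)}=1$. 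A direct computation with \eqref{L3.1}, using the dilation rule $F_\nu\bigl(G(\cdot/2T)\bigr)(\xi)=2T\,F_\nu(G)(2T\xi)$, gives $u(x,T;u_0^{(N)})=e^{ix^2/4T}f_N(x)$; hence $\int_{[0,b]^c}|u(x,T;u_0^{(N)})|^2\,dx=\|f_N\|_{L^2([0,b]^c)}^2\leq N^m e^{-2C_0N}$, and since $\alpha$ is increasing, $\int_{\mathbb{R}^+}e^{\lambda\alpha(x)}|u_0^{(N)}(x)|^2\,dx\leq e^{\lambda\alpha(2TN)}$. Inserting these bounds into the extended inequality \eqref{T7.1},
\[
1\leq C\bigl(N^m e^{-2C_0N}\bigr)^{\gamma}\bigl(e^{\lambda\alpha(2TN)}\bigr)^{1-\gamma}=C\,N^{m\gamma}\exp\!\bigl(-2C_0\gamma N+(1-\gamma)\lambda\alpha(2TN)\bigr).
\]
Since $\alpha(s)/s\to0$ forces $(1-\gamma)\lambda\alpha(2TN)=o(N)$, the right-hand side tends to $0$ as $N\to\infty$, a contradiction. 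Hence no such $C$ exists; in particular this also proves Theorem \ref{T3}(iii).

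\textit{Expected main obstacle.} Part (i) is essentially bookkeeping on top of Theorem \ref{T6}(i). For part (ii) the delicate points are: that Lemma \ref{L.S.1} genuinely applies to the \emph{unbounded} observation set $[0,b]^c$ — which is the case, since its proof uses only $\mathrm{dist}(0,[0,b]^c)>0$ — and the density reduction from $C_0^\infty$ to compactly supported $L^2$ data, whose only nontrivial ingredient is the continuity of the exponentially weighted term; this holds precisely because the approximants can be taken supported in one fixed bounded interval on which the increasing function $\alpha$ is bounded.
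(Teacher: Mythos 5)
Your proof is correct and takes essentially the same approach as the paper: part (i) is the same concentrating-bump construction $u_k=e^{-ix^2/4T}g_k$ used for Theorem \ref{T6}(i), and part (ii) is the same contradiction argument combining the representation \eqref{L3.1}, a density step, and Lemma \ref{L.S.1}, ending with the incompatibility with $\lim_{s\to\infty}\alpha(s)/s=0$. The only difference is cosmetic: you extend \eqref{T7.1} to compactly supported $L^2$ data and insert the explicit initial datum $u_0^{(N)}$ built from $f_N$, whereas the paper first transfers the assumed inequality to all $g$ with compactly supported $F_\nu g$ and then inserts $f_N$ on the Hankel-transform side.
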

\begin{proof}
(i) Suppose $x_{0}$ be the center of $A$. Let $g$ and $g_{k}$, with $k\in\mathbb{N}^{+}$, satisfy (\ref{T6.9}) and (\ref{T6.10}), respectively.
As a same way to the proof of Theorem \ref{T6}(i), we define a sequence $\{u_{k}\}\subset C_{0}^{\infty}(\mathbb{R}^{+})$ as follows:
\begin{equation}\label{T6.4.3}
	u_{k}(x):=e^{-ix^{2}/4T}g_{k}(x),~~x\in \mathbb{R}^{+},~k\in\mathbb{N}^{+}.
\end{equation}
We can get
\begin{equation*}
\int_{\mathbb{R}^{+}}|u_k(x)|^{2}dx=1~~~~for~all~k\in\mathbb{N}^{+}
\end{equation*}
and
\begin{equation*}
	\lim_{k\rightarrow\infty}\int_{A}|u(x,T;u_k)|^{2}dx=0.
\end{equation*} 
Meanwhile, from (\ref{T6.9}), (\ref{T6.10}) and (\ref{T6.4.3}), we find that for each $k\in\mathbb{N}^{+}$,
\begin{equation*}
\begin{split}
	\int_{\mathbb{R}^{+}}e^{\lambda x}|u_k(x)|^{2}dx&=\int_{\mathbb{R}^{+}}e^{\lambda x}|g_k(x)|^{2}dx=\int_{x_{0}}^{\infty}e^{\lambda x}|k^{\frac{1}{2}}g(k(x-x_{0}))|^{2}dx\\
	&=\int_{0}^{\infty}e^{\lambda(\frac{x}{k}+x_{0})}|g(x)|^{2}dx\leq \int_{0}^{\infty}e^{\lambda(x+x_{0})}|g(x)|^{2}dx<\infty.
\end{split}
\end{equation*}
Hence, this ends the proof of (i).\\
(ii) For a contradiction, suppose that the conclusion is not true. Then there exist $\bar{b}$, $\bar{\lambda}$, $\bar{T}$, $\bar{C}>0$, $\bar{\gamma}\in (0,1)$, and an increasing function $\bar{\alpha}(s)$ on $[0,\infty)$ with $\lim_{s\rightarrow\infty}\bar{\alpha}(s)/s=0$ such that for each $v_{0}\in C_{0}^{\infty}(\mathbb{R}^{+}),$
\begin{equation}\label{T7.2}
\int_{\mathbb{R}^{+}}|v_0(x)|^{2}dx\leq \bar{C}\left( \int_{[0,\bar{b}]^c}|u(x,\bar{T};v_0)|^{2}dx\right) ^{\bar{\gamma}}\left( \int_{\mathbb{R}^{+}}e^{\bar{\lambda}\bar{\alpha} (x)}|v_0(x)|^{2}dx\right) ^{1-\bar{\gamma}}.
\end{equation} 
For any fixed $g\in L^{2}(\mathbb{R}^{+})$ with $F_{\nu}g\in C_{0}^{\infty}(\mathbb{R}^{+})$, we define $v_{0,g}\in C_{0}^{\infty}(\mathbb{R}^{+})$ via
\begin{equation}\label{T7.3}
	F_{\nu}g(x)=(2\bar{T})^{\frac{1}{2}}e^{-\frac{i}{2}(\nu+1)\pi}e^{i\bar{T}x^{2}}v_{0,g}(2\bar{T}x),~~x\in\mathbb{R}^{+}.
\end{equation} 
From (\ref{T3.3}), (\ref{T3.4}) (with $(T,u_{0})=(\bar{T},v_{0,g})$) and (\ref{T7.3}), we find that
\begin{equation}\label{T7.4}
	g(x)=e^{-ix^{2}/4\bar{T}}u(x,\bar{T};v_{0,g}),~~x\in\mathbb{R}^{+}.
\end{equation} 
By (\ref{T7.4}), the conservation law (for the Schr\"{o}dinger equation), (\ref{T7.2}) and (\ref{T7.3}), we get that
\begin{equation*}
\begin{split}
	\int_{\mathbb{R}^{+}}|g(x)|^{2}dx&=\int_{\mathbb{R}^{+}}|u(x,\bar{T};v_{0,g})(x)|^{2}dx=\int_{\mathbb{R}^{+}}|v_{0,g}(x)|^{2}dx\\
	&\leq \bar{C}\left( \int_{[0,\bar{b}]^c}|u(x,\bar{T};v_{0,g})|^{2}dx\right) ^{\bar{\gamma}}\left( \int_{\mathbb{R}^{+}}e^{\bar{\lambda}\bar{\alpha} (x)}|v_{0,g}(x)|^{2}dx\right) ^{1-\bar{\gamma}}\\
	&=\bar{C}\left( \int_{[0,\bar{b}]^c}|g(x)|^{2}dx\right) ^{\bar{\gamma}}\left( \int_{\mathbb{R}^{+}}e^{\bar{\lambda}\bar{\alpha} (2\bar{T}x)}|F_{\nu}g(x)|^{2}dx\right) ^{1-\bar{\gamma}}.
\end{split}
\end{equation*}
From this, using a standard density argument, we obtain that for each $g\in L^{2}(\mathbb{R}^{+})$ with supp $F_{\nu}g$ compact,
\begin{equation*}
		\int_{\mathbb{R}^{+}}|g(x)|^{2}dx\leq \bar{C}\left( \int_{[0,\bar{b}]^c}|g(x)|^{2}dx\right) ^{\bar{\gamma}}\left( \int_{\mathbb{R}^{+}}e^{\bar{\lambda}\bar{\alpha} (2\bar{T}x)}|F_{\nu}g(x)|^{2}dx\right) ^{1-\bar{\gamma}}.
\end{equation*}
Since $\bar{\alpha}(\cdotp)$ is increasing and because the Hankel transform is an isometry, the above implies that for each $N\geq 1$ and each $g\in L^{2}(\mathbb{R}^{+})$ with supp $F_{\nu}g\subset[0,N]$,
 \begin{equation}\label{T7.6}
 \begin{split}
 	\int_{\mathbb{R}^{+}}|g(x)|^{2}dx&\leq \bar{C}\left( \int_{[0,\bar{b}]^c}|g(x)|^{2}dx\right) ^{\bar{\gamma}}\left( \int_{\mathbb{R}^{+}}e^{\bar{\lambda}\bar{\alpha} (2\bar{T}x)}|F_{\nu}g(x)|^{2}dx\right)^{1-\bar{\gamma}}\\
 	&=\bar{C}e^{(1-\bar{\gamma})\bar{\lambda}\bar{\alpha} (2\bar{T}N)}\left( \int_{[0,\bar{b}]^c}|g(x)|^{2}dx\right) ^{\bar{\gamma}}\left(  \int_{\mathbb{R}^{+}}|F_{\nu}g(x)|^{2}dx\right) ^{1-\bar{\gamma}}.
 \end{split}
 \end{equation}
 So (\ref{T7.6}) implies that for $N\geq1$ and each $g\in L^{2}(\mathbb{R}^{+})$ with supp $F_{\nu}g\subset[0,N]$,
 \begin{equation}\label{T7.7}
 		\int_{\mathbb{R}^{+}}|g(x)|^{2}dx\leq \bar{C}^{\frac{1}{\bar{\gamma}}}e^{\frac{1-\bar{\gamma}}{\bar{\gamma}}\bar{\lambda}\bar{\alpha} (2\bar{T}N)}\int_{[0,\bar{b}]^c}|g(x)|^{2}dx.
 \end{equation}
 In addition, according to Lemma \ref{L.S.1}, there are $C_{0}$, $N_{0}>0$, such that for each $N\geq N_{0}$ there is $f_{N}\in L^{2}(\mathbb{R}^{+})\backslash \{0\}$ with supp $F_{\nu}f_{N}\subset[0,N]$ such that
  \begin{equation}\label{T7.8}
 	e^{C_{0}N}\int_{[0,\bar{b}]^c}|f_{N}(x)|^{2}dx\leq N^{\frac{m}{2}}\int_{\mathbb{R}^{+}}|f_{N}(x)|^{2}dx,
 \end{equation}
 where $m$ is the smallest integer for $\nu-m\leq0$.\\
 By (\ref{T7.7}) and (\ref{T7.8}), we get that for each $N\geq N_{0}$,
 \begin{equation*}
 	e^{C_{0}N}\leq N^{\frac{m}{2}}\bar{C}^{\frac{1}{\bar{\gamma}}}e^{\frac{1-\bar{\gamma}}{\bar{\gamma}}\bar{\lambda}\bar{\alpha} (2\bar{T}N)},
 \end{equation*}
 from which it follows that
 \begin{equation*}
 	0<\frac{\bar{\gamma}C_{0}}{2(1-\bar{\gamma})\bar{\lambda}\bar{T}}\leq \lim_{N\rightarrow \infty}\frac{\bar{\alpha}(2\bar{T}N)}{2\bar{T}N}.
 \end{equation*}
 This contradicts $\lim_{s\rightarrow \infty}\frac{\bar{\alpha}(s)}{s}=0$. Hence, the conclusion is true.
 \end{proof}
 \section{Applications}
 Now,  we turn to the applications. We mainly consider the applications of Theorem \ref{T1}\textendash \ref{T9} to different kinds of controllability for the Schrödinger equations.  Based on an abstract lemma \cite[Lemma 5.1]{WWZ} concerning the equivalence between observability and controllability, we can obtain the following results. For the meaning and understanding of the following controllability properties, we refer readers to the literature \cite{WWZ}.
 
 \begin{theorem}\label{T61}
 Let $A, B$ be two measurable sets in $\mathbb{R}^{+}$ with finite measure and $T>0$. Let $0\leq t_{1}<t_{2}\leq T$. Consider the following impulse controlled Schrödinger equation:
\begin{equation}\label{T61.1}
\begin{cases}
i\partial_tu-H_{\nu}u=\delta_{\{t=t_{1}\}}\chi_{A^{c}}(x)f_{1}+\delta_{\{t=t_{2}\}}\chi_{B^{c}}(x)f_{2},\,\,x\in \mathbb{R}^+,~ t\in(0,T), \\
u(x,0)=u_0\in L^{2}(\mathbb{R}^{+}).
\end{cases}
\end{equation}
Denote by $u_{1}(\cdotp,\cdotp,u_{0},f_{1},f_{2})$ the solution to the equation (\ref{T61.1}). Then for any $u_{0}$, $u_{T} \in L^{2}(\mathbb{R}^{+})$, there exists a pair of controls $(f_{1},f_{2})\in L^{2}(\mathbb{R}^{+})\times L^{2}(\mathbb{R}^{+})$ such that
\begin{equation}\label{T61.2}
u_{1}(x,T,u_{0},f_{1},f_{2})=u_{T},
\end{equation}
and 
\begin{equation}\label{T61.3}
\lVert f_{1}\rVert^{2}_{L^{2}(\mathbb{R}^{+})}+\lVert f_{2}\rVert^{2}_{L^{2}(\mathbb{R}^{+})}\leq C\lVert u_{T}-e^{-iH_{\nu}T}u_{0}\rVert^{2}_{L^{2}(\mathbb{R}^{+})},
\end{equation}
where the constant $C=C(\nu,A,B,t_{2}-t_{1})$ is given by Theorem \ref{T1}.
\end{theorem}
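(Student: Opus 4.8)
The plan is to derive Theorem~\ref{T61} from Theorem~\ref{T1} by the Hilbert Uniqueness Method, that is, the duality between observability and controllability encoded in \cite[Lemma 5.1]{WWZ}. First I would write, by Duhamel's formula, the state reached at the terminal time by the impulsive controls:
\[
u_{1}(\cdot,T,u_{0},f_{1},f_{2})=e^{-iTH_{\nu}}u_{0}-ie^{-i(T-t_{1})H_{\nu}}(\chi_{A^{c}}f_{1})-ie^{-i(T-t_{2})H_{\nu}}(\chi_{B^{c}}f_{2}).
\]
Thus \eqref{T61.2} is equivalent to requiring $(f_{1},f_{2})$ to be a preimage of $w:=u_{T}-e^{-iTH_{\nu}}u_{0}$ under the bounded control operator $\Phi\colon L^{2}(\mathbb{R}^{+})\times L^{2}(\mathbb{R}^{+})\to L^{2}(\mathbb{R}^{+})$ defined by $\Phi(f_{1},f_{2}):=-ie^{-i(T-t_{1})H_{\nu}}\chi_{A^{c}}f_{1}-ie^{-i(T-t_{2})H_{\nu}}\chi_{B^{c}}f_{2}$. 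By \cite[Lemma 5.1]{WWZ}, the surjectivity of $\Phi$ together with the quantitative bound $\|f_{1}\|_{L^{2}}^{2}+\|f_{2}\|_{L^{2}}^{2}\le C\|w\|_{L^{2}}^{2}$ holds if and only if the dual coercivity estimate $\|\Phi^{*}g\|_{L^{2}\times L^{2}}^{2}\ge C^{-1}\|g\|_{L^{2}}^{2}$ holds for every $g\in L^{2}(\mathbb{R}^{+})$.

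Next I would compute $\Phi^{*}$. Since $H_{\nu}$ is self-adjoint, each $e^{-isH_{\nu}}$ is unitary with $(e^{-isH_{\nu}})^{*}=e^{isH_{\nu}}$, and a direct calculation gives $\Phi^{*}g=(i\chi_{A^{c}}e^{i(T-t_{1})H_{\nu}}g,\;i\chi_{B^{c}}e^{i(T-t_{2})H_{\nu}}g)$, hence
\[
\|\Phi^{*}g\|_{L^{2}\times L^{2}}^{2}=\int_{A^{c}}|e^{i(T-t_{1})H_{\nu}}g|^{2}\,dx+\int_{B^{c}}|e^{i(T-t_{2})H_{\nu}}g|^{2}\,dx .
\]
So everything reduces to establishing $\|g\|_{L^{2}}^{2}\le C(\nu,A,B,t_{2}-t_{1})\,\|\Phi^{*}g\|_{L^{2}\times L^{2}}^{2}$. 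For this I would set $w_{0}:=e^{i(T-t_{1})H_{\nu}}g$, so that $\|w_{0}\|_{L^{2}}=\|g\|_{L^{2}}$ by unitarity, and observe that the solution of \eqref{equation1.1} with initial datum $w_{0}$ satisfies $u(\cdot,0;w_{0})=e^{i(T-t_{1})H_{\nu}}g$ and $u(\cdot,t_{2}-t_{1};w_{0})=e^{i(T-t_{2})H_{\nu}}g$. Applying Theorem~\ref{T1} to $u(\cdot,\cdot;w_{0})$ with the two observation times $S=0$ and $t_{2}-t_{1}$ and the sets $A,B$ then yields exactly the desired inequality, with the constant $C(\nu,A,B,t_{2}-t_{1})$ produced by Theorem~\ref{T1}. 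Plugging this back into \cite[Lemma 5.1]{WWZ} gives a control pair $(f_{1},f_{2})$ satisfying \eqref{T61.2} and \eqref{T61.3}.

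The functional-analytic content is entirely carried by Theorem~\ref{T1} and the abstract duality lemma, so I do not expect a genuine obstacle; the two points that need care are the rigorous interpretation of the Dirac-in-time forcing in \eqref{T61.1} (its mild solution is piecewise continuous from $[0,T]$ into $L^{2}(\mathbb{R}^{+})$, with jumps of size $-i\chi_{A^{c}}f_{1}$ at $t_{1}$ and $-i\chi_{B^{c}}f_{2}$ at $t_{2}$) and the correct bookkeeping of the backward time directions and of which set is observed at which time in $\Phi^{*}$, which is precisely what the change of variable $w_{0}=e^{i(T-t_{1})H_{\nu}}g$ is designed to handle.
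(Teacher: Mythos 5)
Your proposal is correct and follows essentially the same route as the paper: dualize the impulse-control problem, identify the adjoint with the solution of \eqref{equation1.1} evaluated at the two times $t_1,t_2$ on $A^{c}$ and $B^{c}$ (the paper does this via the backward dual equation, you via the substitution $w_0=e^{i(T-t_1)H_\nu}g$, which is the same thing), and apply Theorem \ref{T1} with time gap $t_2-t_1$ together with \cite[Lemma 5.1]{WWZ}. The only cosmetic difference is that the paper uses the lemma in its relaxed form (adding a $\tfrac1k\|Rg\|^2$ term and passing to a weak limit of the resulting controls), whereas you state the surjectivity/coercivity duality directly; the substance is identical.
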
 
\begin{proof}
We sketch the proof here. Consider the following dual equation
\begin{equation}\label{T61.4}
\begin{cases}
i\partial_t\psi-H_{\nu}\psi=0,\,\,x\in \mathbb{R}^+,~ t\in(0,T), \\
u(x,T)=g\in L^{2}(\mathbb{R}^{+}).
\end{cases}
\end{equation}
Write $\psi(\cdotp,\cdotp,T,g)$ for the solution to (\ref{T61.4}). Then Theorem \ref{T1} implies that 
\begin{equation}\label{T61.5}
\int_{\mathbb{R}^{+}}|g|^{2}dx\leq C\left( \int_{A^{c}}|\psi(\cdotp,t_{1},T,g)|^{2}dx+\int_{B^{c}}|\psi(\cdotp,t_{2},T,g)|^{2}dx\right).
\end{equation}
Now we define the state transformation operator $R:L^{2}(\mathbb{R}^{+})\rightarrow L^{2}(\mathbb{R}^{+})$ and the observation operator $O:L^{2}(\mathbb{R}^{+})\times L^{2}(\mathbb{R}^{+})\rightarrow L^{2}(\mathbb{R}^{+})$ as follows:
\begin{equation}\label{T61.6}
Rg=g;~~Of=\left( \chi_{A^{c}}(x)\psi(\cdotp,t_{1},T,g),~\chi_{B^{c}}(x)\psi(\cdotp,t_{2},T,g)\right) .
\end{equation}
From (\ref{T61.5}) and (\ref{T61.6}), we find that for each $g\in L^{2}(\mathbb{R}^{+})$,
\begin{equation}\label{T61.7}
\lVert Rg\rVert^{2}_{L^{2}(\mathbb{R}^{+})}\leq C\lVert Og\rVert^{2}_{L^{2}(\mathbb{R}^{+})\times L^{2}(\mathbb{R}^{+})}+\frac{1}{k}\lVert Rg\rVert^{2}_{L^{2}(\mathbb{R}^{+})},~~~k\in\mathbb{N}^{+}.
\end{equation}
By Lemma 5.1 in \cite{WWZ}, there is a pair $(f_{1k},f_{2k})\in L^{2}(\mathbb{R}^{+})\times L^{2}(\mathbb{R}^{+})$, $k\in\mathbb{N}^{+}$ such that the following dual inequality holds
\begin{equation}\label{T61.8}
\frac{1}{C}\lVert (f_{1k},f_{2k})\rVert^{2}_{L^{2}(\mathbb{R}^{+})\times L^{2}(\mathbb{R}^{+})}+k\lVert R^{*}g-O^{*}(f_{1k},f_{2k})\rVert^{2}_{L^{2}(\mathbb{R}^{+})}\leq \lVert g\rVert^{2}_{L^{2}(\mathbb{R}^{+})},~~~k\in\mathbb{N}^{+},
\end{equation}
where 
\begin{equation}\label{T61.9}
R^{*}g=g;~~~O^{*}(f_{1k},f_{2k})=u(\cdotp,T,0,f_{1k},f_{2k}).
\end{equation}
Define 
\begin{equation}\label{T61.10}
g=u_{T}-e^{-iH_{\nu}T}u_{0},~~~x\in \mathbb{R}^{+}.
\end{equation}
Then (\ref{T61.2}) and (\ref{T61.3}) are followed by choosing a weak convergence subsequence in (\ref{T61.9}) and a limiting procedure.
\end{proof}
Similarly, combining Theorems \ref{T3}-\ref{T9} with Lemma 5.1 in \cite{WWZ}, we get the following controllability results:

 \begin{theorem}\label{T62}
 Let $b,\lambda>0$ and $T>s\geq0$. Consider the following impulse controlled Schrödinger equation:
\begin{equation}\label{T62.1}
\begin{cases}
i\partial_tu-H_{\nu}u=\delta_{\{t=s\}}\chi_{[0,b]^{c}}(x)f,\,\,x\in \mathbb{R}^+,~ t\in(0,T), \\
u(x,0)=u_0\in L^{2}(\mathbb{R}^{+}).
\end{cases}
\end{equation}
Denote by $u_{2}(\cdotp,\cdotp,u_{0},f)$ the solution to the equation (\ref{T62.1}). Then for any $\varepsilon>0$ and $u_{0}$, $u_{T} \in L^{2}(\mathbb{R}^{+})$, there exists a control $f\in L^{2}(\mathbb{R}^{+})$ such that\\
\\$~~~~~~~$ $\varepsilon^{(1-q)/q}\int_{\mathbb{R}^{+}}|f(x)|^{2}dx+\varepsilon^{-1}\int_{\mathbb{R}^{+}}e^{-\lambda x}|u_{2}(\cdotp,T,u_{0},f)-u_{T}(\cdotp)|^{2}dx$
\begin{equation}\label{T62.2}
\leq C\left( 1+\frac{b^{2\nu+2}}{(\lambda(T-s))^{2\nu+2}}\right)\int_{\mathbb{R}^{+}}|u_{T}(x)-e^{-iH_{\nu}T}u_{0}(x)|^{2}dx ,
\end{equation}
where  $q:=\theta^{1+\frac{b}{\lambda(T-s)}}\in(0,1)$, $C$ and $\theta$ be given by Theorem \ref{T3}(i).
\end{theorem}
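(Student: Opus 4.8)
The plan is to derive Theorem \ref{T62} from the observability-type inequality in Theorem \ref{T3}(i) by the standard duality argument encapsulated in \cite[Lemma 5.1]{WWZ}, exactly along the lines of the proof of Theorem \ref{T61} given above. First I would set up the dual (adjoint) equation
\begin{equation*}
\begin{cases}
i\partial_t\psi-H_{\nu}\psi=0,\quad x\in\mathbb{R}^+,\ t\in(0,T),\\
\psi(x,T)=g\in L^{2}(\mathbb{R}^{+}),
\end{cases}
\end{equation*}
and write $\psi(\cdot,\cdot,T,g)$ for its solution. Applying Theorem \ref{T3}(i) to the datum $\psi(\cdot,s,T,g)$ at time $s$, with the time lag $T-s$ in place of $T$, gives for some $C=C(\nu)$ and $\theta=\theta(\nu)\in(0,1)$ the weighted interpolation estimate
\begin{equation*}
\int_{\mathbb{R}^{+}}|g(x)|^{2}dx\leq C\Big(1+\tfrac{b^{2\nu+2}}{(\lambda(T-s))^{2\nu+2}}\Big)\Big(\int_{[0,b]^{c}}|\psi(x,s,T,g)|^{2}dx\Big)^{q}\Big(\int_{\mathbb{R}^{+}}e^{\lambda x}|\psi(x,T,T,g)|^{2}dx\Big)^{1-q},
\end{equation*}
with $q=\theta^{1+b/(\lambda(T-s))}$; here one uses the conservation law and time-reversibility of the Schr\"odinger flow to transfer the inequality of Theorem \ref{T3}(i) (stated for initial data) to data prescribed at time $T$.

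Next I would rewrite this in the abstract observation-operator framework: define $R:L^{2}(\mathbb{R}^{+})\to L^{2}(\mathbb{R}^{+})$ by $Rg=g$ and the observation operator $O:L^{2}(\mathbb{R}^{+})\to L^{2}(\mathbb{R}^{+})$ by $Og=\chi_{[0,b]^{c}}\psi(\cdot,s,T,g)$, while the weight $e^{\lambda x}$ appears in the norm measuring $g$ on the final-time slice. One packages the interpolation inequality into the form required by \cite[Lemma 5.1]{WWZ}, namely an estimate of $\|Rg\|^{2}$ by a product of $\|Og\|^{2}$ and a weighted norm; applying Young's inequality $a^{q}c^{1-q}\le \varepsilon^{-(1-q)/q}a+\varepsilon c$ (up to constants) converts the product into a sum with a free parameter $\varepsilon>0$, which is precisely the shape needed to invoke the duality lemma and produce the control $f$. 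The lemma then yields, for each $k$, a control $f_{k}\in L^{2}(\mathbb{R}^{+})$ satisfying a dual inequality
\begin{equation*}
\tfrac{1}{C}\varepsilon^{(1-q)/q}\|f_{k}\|^{2}_{L^{2}(\mathbb{R}^{+})}+k\varepsilon^{-1}\|R^{*}g-O^{*}f_{k}\|^{2}\leq \|g\|^{2}_{L^{2}(\mathbb{R}^{+})},
\end{equation*}
where $O^{*}f_{k}=u_{2}(\cdot,T,0,f_{k})$, and one chooses $g=u_{T}-e^{-iH_{\nu}T}u_{0}$ (weighted suitably by $e^{-\lambda x}$ so that the adjoint pairing matches the $e^{\lambda x}$ weight on the primal side). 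Passing to a weakly convergent subsequence $f_{k}\rightharpoonup f$ and sending $k\to\infty$ gives $u_{2}(\cdot,T,u_{0},f)=u_{T}$ in the appropriate weighted sense together with the quantitative bound \eqref{T62.2}.

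The main obstacle I anticipate is bookkeeping the weights correctly: Theorem \ref{T3}(i) involves the inhomogeneous weight $e^{\lambda x}$ on the initial data, so the dual/primal pairing is not the plain $L^{2}$ pairing but a weighted one, and one must be careful that the adjoint of "restrict to $[0,b]^{c}$ and evolve" is the composition the duality lemma expects, and that the residual term $\|u_{2}(\cdot,T,0,f)-u_{T}+e^{-iH_{\nu}T}u_{0}\|$ is measured in the $e^{-\lambda x}$-weighted norm appearing in \eqref{T62.2}. A secondary technical point is justifying the limiting procedure: one needs the boundedness of $\{f_{k}\}$ in $L^{2}(\mathbb{R}^{+})$ (immediate from the dual inequality since $\|g\|<\infty$), lower semicontinuity of the norms under weak convergence to keep \eqref{T62.2}, and continuity of the solution map $f\mapsto u_{2}(\cdot,T,0,f)$ from $L^{2}$ into the weighted space to identify the limit. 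None of these steps is deep — they are routine given Theorem \ref{T3} and \cite[Lemma 5.1]{WWZ} — so, as the authors indicate, only a sketch is warranted.
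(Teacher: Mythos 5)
Your route is the one the paper intends: the paper gives no detailed argument for Theorem \ref{T62}, only the indication that it follows by combining Theorem \ref{T3}(i) with Lemma 5.1 of \cite{WWZ} in parallel with the proof written out for Theorem \ref{T61}, and your first two steps are correct. Indeed, by time reversal (apply Theorem \ref{T3}(i) to $\bar g$ with evolution time $T-s$, using that $|e^{i(T-s)H_\nu}g|=|e^{-i(T-s)H_\nu}\bar g|$) one gets exactly your displayed interpolation inequality for the dual solution $\psi(\cdot,\cdot,T,g)$, with $q=\theta^{1+b/(\lambda(T-s))}$ and the constant $C\bigl(1+\frac{b^{2\nu+2}}{(\lambda(T-s))^{2\nu+2}}\bigr)$, and Young's inequality $a^{q}b^{1-q}\le \varepsilon^{-(1-q)/q}a+\varepsilon b$ converts it into the relaxed observability estimate $\|g\|_{L^{2}}^{2}\le C'\bigl(\varepsilon^{-(1-q)/q}\int_{[0,b]^{c}}|\psi(x,s;T,g)|^{2}dx+\varepsilon\int_{\mathbb{R}^{+}}e^{\lambda x}|g|^{2}dx\bigr)$; a standard density argument (as invoked in the proof of Theorem \ref{T5}) extends Theorem \ref{T3}(i) from $C_{0}^{\infty}$ data to the weighted class of $g$ needed here.

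The genuine flaw is in your concluding step. You import the $k$-relaxation and the $k\to\infty$ weak-limit procedure from the proof of Theorem \ref{T61} and assert that in the limit $u_{2}(\cdot,T,u_{0},f)=u_{T}$ ``in the appropriate weighted sense.'' This cannot work and is not what Theorem \ref{T62} claims. First, exact matching is false in general: the reachable set at time $T$ is the proper closed affine subspace $e^{-iTH_{\nu}}u_{0}+e^{-i(T-s)H_{\nu}}\bigl(\chi_{[0,b]^{c}}L^{2}(\mathbb{R}^{+})\bigr)$ (a unitary image of a proper closed subspace), so targets outside it cannot be hit; correspondingly, the unrelaxed observability inequality $\|g\|^{2}\le C\|\chi_{[0,b]^{c}}\psi(\cdot,s;T,g)\|^{2}$ for all $g\in L^{2}(\mathbb{R}^{+})$ fails, which is precisely why the weighted factor is present in Theorem \ref{T3} (cf.\ Theorem \ref{T3}(iii) and Theorem \ref{T7}). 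Second, the constants do not survive your limit: to obtain a dual inequality with the factor $k\varepsilon^{-1}$ on the discrepancy term you would need the primal inequality with relaxation coefficient $\varepsilon/k$, and replacing $\varepsilon$ by $\varepsilon/k$ in the Young step inflates the other coefficient to $(\varepsilon/k)^{-(1-q)/q}$, so the bound on $\|f_{k}\|_{L^{2}}$ degenerates as $k\to\infty$ and \eqref{T62.2} with the stated fixed-$\varepsilon$ constants is lost. The correct conclusion requires no limiting procedure at all: apply Lemma 5.1 of \cite{WWZ} once, treating the weighted term not as a vanishing relaxation but as a second observation component, say $Og=\bigl(\varepsilon^{-(1-q)/(2q)}\chi_{[0,b]^{c}}\psi(\cdot,s;T,g),\ \varepsilon^{1/2}e^{\lambda x/2}g\bigr)$ with $Rg=g$; the primal estimate above reads $\|Rg\|^{2}\le C'\|Og\|^{2}$, and its dual, evaluated at $g=u_{T}-e^{-iH_{\nu}T}u_{0}\in L^{2}(\mathbb{R}^{+})$, produces a control $f$ whose second dual component is exactly the $e^{-\lambda x}$-weighted discrepancy $u_{2}(\cdot,T,u_{0},f)-u_{T}$ with coefficient $\varepsilon^{-1}$, i.e.\ \eqref{T62.2} directly. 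With that replacement (and dropping the exact-matching claim), the rest of your sketch is sound and coincides with the paper's intended argument.
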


\begin{theorem}\label{T63}
Given any interval $A=[a_{1}, a_{2}]$, $B=[b_{1},b_{2}]\subset \mathbb{R}^{+}$, $a=a_{2}-a_{1}$, $b=b_{2}-b_{1}$, and $a,b,\lambda,T>0$. Consider the following impulse controlled Schrödinger equation:
\begin{equation}\label{T63.1}
\begin{cases}
i\partial_tu-H_{\nu}u=\delta_{\{t=0\}}\chi_{B}(x)f,\,\,x\in \mathbb{R}^+, t\in(0,T), \\
u(x,0)=u_0\in L^{2}(\mathbb{R}^{+}).
\end{cases}
\end{equation}
Denote by $u_{3}(\cdotp,\cdotp,u_{0},f)$ the solution to the equation (\ref{T63.1}). Then for any $\varepsilon>0$ and $u_{0} \in \tilde{L}^{2}(A;\mathbb{C})$, there exists a control $f\in L^{2}(\mathbb{R}^{+})$ such that\\
\\$~~~~~~~$ $\varepsilon^{(1-\theta^{p})/\theta^{p}}\int_{\mathbb{R}^{+}}|f(x)|^{2}dx+\varepsilon^{-1}\int_{\mathbb{R}^{+}}e^{-\lambda x}|u_{3}(\cdotp,T,u_{0},f)|^{2}dx$
\begin{equation}\label{T63.2}
\leq C(a_{2}^{2\nu+2}-a_{1}^{2\nu+2})((\lambda T)\wedge b)^{-(2\nu+2)}\int_{A}|u_{0}(x)|^{2}dx ,
\end{equation}
where  $C$, $\theta$ and $p$ be given by Theorem \ref{T4}, $\tilde{L}^{2}(A;\mathbb{C}):=\{g\in L^{2}(\mathbb{R}^{+};\mathbb{C}):g=0 ~on~A^{c}\}$.
\end{theorem}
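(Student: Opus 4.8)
\noindent\textbf{Proof proposal for Theorem~\ref{T63}.}
The plan is to run the Hilbert Uniqueness Method exactly as in the proof of Theorem~\ref{T61}, with Theorem~\ref{T4} playing the role of Theorem~\ref{T1}; it is the interpolation structure of \eqref{T4.1} that upgrades a clean controllability statement to the $\varepsilon$-weighted form \eqref{T63.2}. First I would record the adjoint of the control system \eqref{T63.1}: it is the free equation $i\partial_t\psi-H_\nu\psi=0$ on $(0,T)$, and because the control acts at $t=0$ through multiplication by $\chi_B$, the affine part of the control-to-terminal-state map is $B_0f:=-i\,e^{-iH_\nu T}\chi_B f$, with adjoint into $L^2(\mathbb{R}^+)$ given by $B_0^{*}g=i\,\chi_B\,e^{iH_\nu T}g$. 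Writing $h:=e^{iH_\nu T}g$ we have $\|B_0^{*}g\|_{L^2}^2=\int_B|h(x)|^2\,dx$, and, since $H_\nu$ has real coefficients, $h=\overline{u(\cdot,T;\bar g)}$, whence $\int_A|h|^2=\int_A|u(x,T;\bar g)|^2\,dx$ and $\int_B|h|^2=\int_B|u(x,T;\bar g)|^2\,dx$.

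Next I would extract an observability inequality for this adjoint from Theorem~\ref{T4}. Applying \eqref{T4.1} to the datum $\bar g$ (first for $\bar g\in C_0^{\infty}(\mathbb{R}^+)$, then for all $g$ in $W':=L^2(\mathbb{R}^+,e^{\lambda x}\,dx)$ by density), together with the conservation law, yields
\[
\int_A|h(x)|^2\,dx\le C\,(a_2^{2\nu+2}-a_1^{2\nu+2})\big((\lambda T)\wedge b\big)^{-(2\nu+2)}\Big(\int_B|h(x)|^2\,dx\Big)^{\theta^{p}}\Big(\int_{\mathbb{R}^+}e^{\lambda x}|g(x)|^2\,dx\Big)^{1-\theta^{p}}.
\]
Since $u_0\in\tilde L^{2}(A;\mathbb{C})$, Cauchy--Schwarz on $A$ gives $|\langle e^{-iH_\nu T}u_0,g\rangle_{L^2}|=|\langle u_0,h\rangle_{L^2}|\le\|u_0\|_{L^2(A)}\big(\int_A|h|^2\big)^{1/2}$, so the displayed estimate is exactly the interpolation observability inequality attached to the pair (control operator $B_0$, terminal weight $e^{-\lambda x}$): $\int_B|h|^2=\|B_0^{*}g\|_{L^2}^2$ is the control cost and $\int e^{\lambda x}|g|^2=\|g\|_{W'}^2$ is the norm of the space $W'$ dual to the terminal norm $\|v\|_W^2:=\int e^{-\lambda x}|v(x)|^2\,dx$.

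The third step is a penalized HUM minimization, which is the content of \cite[Lemma~5.1]{WWZ} and which I would carry out as at the end of the proof of Theorem~\ref{T61}. For fixed $\varepsilon>0$ I would minimize over $g\in W'$ the strictly convex, coercive functional
\[
J_\varepsilon(g)=\tfrac12\,\varepsilon^{-(1-\theta^{p})/\theta^{p}}\|B_0^{*}g\|_{L^2}^2+\tfrac{\varepsilon}{2}\int_{\mathbb{R}^+}e^{\lambda x}|g(x)|^2\,dx+\operatorname{Re}\langle e^{-iH_\nu T}u_0,g\rangle_{L^2},
\]
set $f:=-\varepsilon^{-(1-\theta^{p})/\theta^{p}}B_0^{*}g_{\min}$, and read off from the Euler--Lagrange equation that $u_3(\cdot,T,u_0,f)=e^{-iH_\nu T}u_0+B_0f=-\varepsilon\,e^{\lambda x}g_{\min}$, so that $\varepsilon^{-1}\int e^{-\lambda x}|u_3(\cdot,T,u_0,f)|^2=\varepsilon\,\|g_{\min}\|_{W'}^2$. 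Substituting $J_\varepsilon(g_{\min})\le J_\varepsilon(0)=0$ into the observability estimate above and using $X^{\theta^p}Y^{1-\theta^p}\le X+Y$, the powers of $\varepsilon$ produced by the exponents $\theta^{p}$ and $1-\theta^{p}$ cancel exactly, leaving
\[
\varepsilon^{(1-\theta^{p})/\theta^{p}}\int_{\mathbb{R}^+}|f(x)|^2\,dx+\varepsilon^{-1}\int_{\mathbb{R}^+}e^{-\lambda x}|u_3(x,T,u_0,f)|^2\,dx\le C\,(a_2^{2\nu+2}-a_1^{2\nu+2})\big((\lambda T)\wedge b\big)^{-(2\nu+2)}\int_A|u_0(x)|^2\,dx,
\]
which is \eqref{T63.2}.

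The step I expect to be the main obstacle is the careful bookkeeping of the two dual weights under time reversal: one must check that the $e^{\lambda x}$ weight sitting on the \emph{initial} datum in Theorem~\ref{T4} becomes, after passing to the adjoint and reversing time, the $e^{-\lambda x}$ weight on the \emph{terminal} state in \eqref{T63.2}, that it is the Cauchy--Schwarz step on $A$ (using $\operatorname{supp}u_0\subset A$) which produces $\int_A|u_0|^2$ rather than $\int_{\mathbb{R}^+}|u_0|^2$ on the right, and that the exact cancellation of the $\varepsilon$-powers goes through with the interpolation exponents $\theta^{p}$, $1-\theta^{p}$ inherited from Theorem~\ref{T4}. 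The remaining points, namely the coercivity of $J_\varepsilon$, the Euler--Lagrange computation, and (if one prefers to phrase the argument through \cite[Lemma~5.1]{WWZ}) the construction of the associated operators $R$, $O$ as in Theorem~\ref{T61}, are routine, so I would only sketch them.
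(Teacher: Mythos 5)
Your proposal is correct and follows essentially the same route the paper intends: Theorem \ref{T63} is obtained by feeding the interpolation observability inequality of Theorem \ref{T4} (applied to the adjoint/conjugated data and split by Young's inequality so that the $\varepsilon$-powers cancel exactly) into the abstract duality statement, Lemma 5.1 of \cite{WWZ}, exactly as sketched for Theorem \ref{T61}, and your penalized HUM functional is precisely that duality argument written out. The only cosmetic points are the sign in $f:=\pm\,\varepsilon^{-(1-\theta^{p})/\theta^{p}}B_0^{*}g_{\min}$ (with your conventions the Euler--Lagrange equation forces the opposite sign to yield $u_{3}(\cdot,T)=-\varepsilon e^{\lambda x}g_{\min}$), and that using the Euler--Lagrange identity for the quadratic part (or quoting \cite[Lemma 5.1]{WWZ} directly, as the paper does) recovers the constant of Theorem \ref{T4} exactly, whereas the bound $J_{\varepsilon}(g_{\min})\le J_{\varepsilon}(0)=0$ costs a harmless extra numerical factor.
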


\begin{theorem}\label{T64}
Let $b,N>0$ and $0\leq s <T.$ Consider the following impulse controlled Schrödinger equation :
\begin{equation}\label{T64.1}
\begin{cases}
i\partial_tu-H_{\nu}u=\delta_{\{t=s\}}\chi_{[0,b]^{c}}(x)f,\,\,x\in \mathbb{R}^+, t\in(0,T), \\
u(x,0)=u_0\in L^{2}(\mathbb{R}^{+}).
\end{cases}
\end{equation}
Denote by $u_{4}(\cdotp,\cdotp,u_{0},f)$ the solution to the equation (\ref{T64.1}). Then for any $u_{0},u_{T} \in L^{2}(\mathbb{R}^{+})$, there exists a control $f\in L^{2}(\mathbb{R}^{+})$ such that\\
\begin{equation}\label{T64.2}
u_{4}(x,T,u_{0},f)=u_{T},~~~x\in[0,N]
\end{equation}
and 
\begin{equation}\label{T64.3}
\lVert f\rVert^{2}_{L^{2}(\mathbb{R}^{+})}\leq e^{C(1+\frac{bN}{T-s})}\lVert u_{T}-e^{-iH_{\nu}T}u_{0}\rVert^{2}_{L^{2}(\mathbb{R}^{+})},
\end{equation}
where the constant $C=C(\nu)$ is given by Theorem \ref{T5}.
\end{theorem}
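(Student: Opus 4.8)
The plan is to reproduce the duality argument from the proof of Theorem \ref{T61}, now feeding in the observability estimate of Theorem \ref{T5} instead of that of Theorem \ref{T1}; the single structural novelty is that, since Theorem \ref{T5} controls only data supported on $[0,N]$, the ambient state has to be cut off to $[0,N]$, and this is exactly what makes the controllability in \eqref{T64.2} hold on the interval $[0,N]$ only.

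First I would introduce the dual free equation
\begin{equation*}
\begin{cases}
i\partial_{t}\psi-H_{\nu}\psi=0,\quad x\in\mathbb{R}^{+},\ t\in(0,T),\\
\psi(x,T)=g\in L^{2}(\mathbb{R}^{+}),
\end{cases}
\end{equation*}
with solution $\psi(\cdot,t,T,g)=e^{i(T-t)H_{\nu}}g$. Since $H_{\nu}$ is a real operator, \eqref{equation1.1} is time-reversible, so Theorem \ref{T5} (applied after complex conjugation, with $T$ there replaced by $T-s$, to the datum $\chi_{[0,N]}g$) yields
\begin{equation*}
\int_{\mathbb{R}^{+}}|\chi_{[0,N]}g|^{2}\,dx\leq e^{C(1+\frac{bN}{T-s})}\int_{[0,b]^{c}}|\psi(x,s,T,\chi_{[0,N]}g)|^{2}\,dx.
\end{equation*}
In the framework of \cite[Lemma 5.1]{WWZ} I would then take the state-transformation operator $R\colon L^{2}(\mathbb{R}^{+})\to L^{2}(\mathbb{R}^{+})$, $Rg=\chi_{[0,N]}g$, and the observation operator $O\colon L^{2}(\mathbb{R}^{+})\to L^{2}(\mathbb{R}^{+})$, $Og=\chi_{[0,b]^{c}}\psi(\cdot,s,T,\chi_{[0,N]}g)$, so that the previous display reads $\lVert Rg\rVert^{2}\leq e^{C(1+bN/(T-s))}\lVert Og\rVert^{2}$, hence also $\lVert Rg\rVert^{2}\leq e^{C(1+bN/(T-s))}\lVert Og\rVert^{2}+\frac1k\lVert Rg\rVert^{2}$ for every $k\in\mathbb{N}^{+}$. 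A Duhamel computation identical to that in the proof of Theorem \ref{T61} gives $R^{*}g=\chi_{[0,N]}g$ and $O^{*}h=\chi_{[0,N]}u_{4}(\cdot,T,0,h)$ (up to a unimodular constant, which can be absorbed into the control).

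Then \cite[Lemma 5.1]{WWZ} furnishes, for each $k$, a control $f_{k}\in L^{2}(\mathbb{R}^{+})$ with
\begin{equation*}
e^{-C(1+\frac{bN}{T-s})}\lVert f_{k}\rVert_{L^{2}(\mathbb{R}^{+})}^{2}+k\lVert R^{*}g-O^{*}f_{k}\rVert_{L^{2}(\mathbb{R}^{+})}^{2}\leq\lVert g\rVert_{L^{2}(\mathbb{R}^{+})}^{2}.
\end{equation*}
Choosing $g:=u_{T}-e^{-iH_{\nu}T}u_{0}$, the first term yields the uniform bound $\lVert f_{k}\rVert^{2}\leq e^{C(1+bN/(T-s))}\lVert u_{T}-e^{-iH_{\nu}T}u_{0}\rVert^{2}$, while the second forces $R^{*}g-O^{*}f_{k}\to0$ in $L^{2}(\mathbb{R}^{+})$. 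Extracting a weakly convergent subsequence $f_{k}\rightharpoonup f$ and using the boundedness of $O^{*}$, I would pass to the limit to obtain $O^{*}f=R^{*}g$, i.e. $\chi_{[0,N]}u_{4}(\cdot,T,0,f)=\chi_{[0,N]}(u_{T}-e^{-iH_{\nu}T}u_{0})$; adding $e^{-iH_{\nu}T}u_{0}$ and using linearity gives \eqref{T64.2}, while the weak lower semicontinuity of the norm gives \eqref{T64.3}.

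The main obstacle is not analytical but a matter of correctly setting up the abstract framework: because Theorem \ref{T5} is an observability inequality only for data supported in $[0,N]$, the state-transformation operator $R$ must carry the cutoff $\chi_{[0,N]}$ (equivalently, one works in $L^{2}([0,N])$), and it is exactly this cutoff that produces exact controllability of $u_{4}(\cdot,T)$ on $[0,N]$ rather than on all of $\mathbb{R}^{+}$. The only other point, the passage from the forward-in-time estimate of Theorem \ref{T5} to the backward-in-time estimate needed for $\psi$, is immediate from the reality of $H_{\nu}$, as recorded above. All remaining steps — the adjoint computations via Duhamel's formula and the weak-compactness/limiting argument — are word for word those in the proof of Theorem \ref{T61}.
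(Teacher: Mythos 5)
Your proposal is correct and follows essentially the same route as the paper, which proves Theorem \ref{T64} only by remarking that it follows from Theorem \ref{T5} combined with Lemma 5.1 of \cite{WWZ} exactly as in the duality argument sketched for Theorem \ref{T61}. Your instantiation with $Rg=\chi_{[0,N]}g$ and $Og=\chi_{[0,b]^{c}}\psi(\cdot,s,T,\chi_{[0,N]}g)$, together with the conjugation/time-reversal step turning the backward dual estimate into Theorem \ref{T5} at time $T-s$, is precisely the intended implementation of that scheme.
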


\begin{theorem}\label{T65}
Given any interval $B=[b_{1},b_{2}]\subset \mathbb{R}^{+}$, $b=b_{2}-b_{1}$, $\lambda_{1},\lambda_{2}, T>0$. Consider the following impulse controlled Schrödinger equation :
\begin{equation}\label{T65.1}
\begin{cases}
i\partial_tu-H_{\nu}u=\delta_{\{t=0\}}\chi_{B}(x)f,\,\,x\in \mathbb{R}^+, t\in(0,T), \\
u(x,0)=u_0\in L^{2}(\mathbb{R}^{+}).
\end{cases}
\end{equation}
Denote by $u_{5}(\cdotp,\cdotp,u_{0},f)$ the solution to the equation (\ref{T65.1}). Then for any $\varepsilon\in(0,1)$ and $u_{0} \in W_{\lambda_{2}}$, there exists a control $f\in L^{2}(\mathbb{R}^{+})$ such that\\
\\$~~~~~~~$ $\frac{1}{\varepsilon}e^{(\frac{1}{\varepsilon})^{1+\frac{1}{C\lambda_{2}((\lambda_{1}T)\wedge \frac{b}{2})}}}\int_{\mathbb{R}^{+}}|f(x)|^{2}dx+\frac{1}{\varepsilon}\int_{\mathbb{R}^{+}}e^{-\lambda_{1} x}|u_{5}(\cdotp,T,u_{0},f)|^{2}dx$
\begin{equation}\label{T65.2}
~~~~~~~~~~~~~~~~~~~~~~~~~~~~~\leq C(x_{0},b,\lambda_{1},\lambda_{2},T)\int_{\mathbb{R}^{+}}e^{\lambda_{2}x}|u_{0}(x)|^{2}dx ,
\end{equation}
where  $C(x_{0},b,\lambda_{1},\lambda_{2},T)$  and $C$ be given by Theorem \ref{T8}, $W_{\lambda_{2}}:=\{f\in L^{2}(\mathbb{R}^{+}):\int_{\mathbb{R}^{+}}e^{\lambda_{2}x}|f(x)|^{2}dx<\infty\}$.
\end{theorem}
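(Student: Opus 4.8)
The plan is to deduce Theorem~\ref{T65} from Theorem~\ref{T8} through the abstract observability--controllability duality of \cite[Lemma~5.1]{WWZ}, following exactly the scheme used above for Theorem~\ref{T61} (and the one that produces Theorems~\ref{T62}--\ref{T64} from Theorems~\ref{T3}--\ref{T5}). First I would introduce the adjoint equation
\begin{equation*}
\begin{cases}
i\partial_t\psi-H_\nu\psi=0, & x\in\mathbb{R}^+,\ t\in(0,T),\\
\psi(x,T)=g\in L^2(\mathbb{R}^+),
\end{cases}
\end{equation*}
and write $\psi(\cdot,\cdot,T,g)$ for its solution. Since the impulsive control in \eqref{T65.1} is placed at $t=0$ through the multiplier $\chi_B$, the natural observation of the adjoint is $\chi_B\,\psi(\cdot,0,T,g)$; the cost term $\int_{\mathbb{R}^+}e^{-\lambda_1 x}|u_5(\cdot,T,u_0,f)|^2\,dx$ makes the final adjoint datum $g$ be paired through the weight $e^{-\lambda_1 x}$ (equivalently, $g\mapsto e^{-\lambda_1 x/2}g$ in an $L^2$ pairing), while the datum $\int_{\mathbb{R}^+}e^{\lambda_2 x}|u_0|^2\,dx$ means $u_0\in W_{\lambda_2}$ and makes $\psi(\cdot,0,T,g)$ be measured in the weight $e^{-\lambda_2 x}$.

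Second, I would recast Theorem~\ref{T8} as a one-parameter family of weak observability inequalities for $\psi$. Using the conservation law and the fact that $\overline{\psi(T-\cdot)}$ again solves \eqref{equation1.1}, an application of Theorem~\ref{T8} to this time-reversed solution gives, for every $\varepsilon\in(0,1)$, an estimate of the shape
\begin{equation*}
\int_{\mathbb{R}^+}e^{-\lambda_2 x}|\psi(x,0,T,g)|^2\,dx
\le C(x_0,b,\lambda_1,\lambda_2,T)\Bigl(\varepsilon\int_{\mathbb{R}^+}e^{\lambda_1 x}|g|^2\,dx
+\varepsilon\,e^{\varepsilon^{-1-\frac{1}{C\lambda_2((\lambda_1 T)\wedge\frac{b}{2})}}}\int_B|\psi(x,0,T,g)|^2\,dx\Bigr),
\end{equation*}
the analogue of \eqref{T61.5}. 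Then I would define the state-transformation operator $R$ (essentially the identity between the relevant weighted spaces) and the observation operator $Og=\bigl(e^{-\lambda_1 x/2}g,\ \chi_B\psi(\cdot,0,T,g)\bigr)$, reorganize the inequality above into the form $\|Rg\|^2\le C\bigl(\varepsilon\|P_1Og\|^2+\varepsilon\,e^{h(\varepsilon)}\|P_2Og\|^2\bigr)$, and feed it into \cite[Lemma~5.1]{WWZ} just as in the passage from \eqref{T61.7} to \eqref{T61.8}. This yields, for each $\varepsilon\in(0,1)$, a control $f_\varepsilon\in L^2(\mathbb{R}^+)$ satisfying the corresponding dual inequality, in which $R^*g=g$ and $O^*f=u_5(\cdot,T,0,f)$, so that $R^*g-O^*f$ records precisely the mismatch of the controlled final state.

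Finally, I would let the relaxation parameter in \cite[Lemma~5.1]{WWZ} tend to infinity, extract a weakly convergent subsequence of the $f_\varepsilon$, and pass to the limit (optimising in $\varepsilon$ at the end): the mismatch is driven to zero in the $e^{-\lambda_1 x}$-weighted norm while the control bound persists, which gives \eqref{T65.2} with the constant $C(x_0,b,\lambda_1,\lambda_2,T)$ of Theorem~\ref{T8}. I expect the main obstacle to be bookkeeping rather than conceptual: Theorem~\ref{T8} is not a genuine observability inequality but a family carrying the large factor $e^{\varepsilon^{-1-\cdots}}$, so one must transport the dependence on $\varepsilon$ faithfully through the duality and verify that, after the limit, the exponents and prefactors reassemble into precisely the form of \eqref{T65.2}; matching the two weighted spaces $L^2(\mathbb{R}^+;e^{-\lambda_1 x}dx)$ and $W_{\lambda_2}$ to the correct slots of Theorem~\ref{T8} under the time reversal is the delicate step, and is the reason the conclusion is an approximate controllability statement with an explicit cost rather than an exact one.
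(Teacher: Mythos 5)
Your proposal is correct and follows essentially the same route as the paper: the paper proves Theorem \ref{T61} in detail via the adjoint equation and \cite[Lemma 5.1]{WWZ}, and then obtains Theorem \ref{T65} precisely by dualizing Theorem \ref{T8} (applied, as you do, to the complex-conjugated, time-reversed adjoint solution so that $\psi(\cdot,0)$ becomes a forward solution at time $T$), followed by the same relaxation/weak-limit step you describe. One transcription slip to fix: for your reorganized inequality to coincide with your displayed observability estimate, the first component of the observation operator must carry the weight $e^{+\lambda_1 x/2}$ (so that its squared $L^2$-norm equals $\int_{\mathbb{R}^{+}}e^{\lambda_1 x}|g|^2\,dx$), not $e^{-\lambda_1 x/2}$; the weight $e^{-\lambda_1 x}$ appears only on the dual side, on the terminal-state mismatch.
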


Finally, we are going to show the last Theorem. Before, we introduce a space. For each $\lambda>0$, we denote $R_{\lambda}$ for the completion of $C_{0}^{\infty}(\mathbb{R}^+)$ in the norm 
\begin{equation*}
\lVert f\rVert_{R_{\lambda}}:=\left( \int_{\mathbb{R}^{+}}e^{\lambda x}|f(x)|^{2}dx+\lVert f\rVert^{2}_{H^{4([\nu]+3)}(\mathbb{R}^{+})}+\int_{0}^{\infty}\frac{1}{x^{4([\nu]+3)}} |f(x)|^{2}dx\right)^{1/2},~~f\in C_{0}^{\infty}(\mathbb{R}^+). 
\end{equation*}
Write $R_{\lambda}^{*}$ for the dual space of $R_{\lambda}$ with respect to the pivot space $L^{2}(\mathbb{R}^{+})$.
\begin{theorem}\label{T66}
Given any interval $B=[b_{1},b_{2}]\subset \mathbb{R}^{+}$, $b=b_{2}-b_{1}$, $\lambda >0$ and $T>s\geq0$. Consider the following impulse controlled Schrödinger equation :
\begin{equation}\label{T66.1}
\begin{cases}
i\partial_tu-H_{\nu}u=\delta_{\{t=s\}}\chi_{B}(x)f,\,\,x\in \mathbb{R}^+, t\in(0,T), \\
u(x,0)=u_0\in L^{2}(\mathbb{R}^{+}).
\end{cases}
\end{equation}
Denote by $u_{6}(\cdotp,\cdotp,u_{0},f)$ the solution to the equation (\ref{T66.1}). Then for any $\varepsilon\in(0,1)$ and $u_{0},u_{T} \in L^{2}(\mathbb{R}^{+})$, there exists a control $f\in L^{2}(\mathbb{R}^{+})$ such that\\
\\$~~~~~~~$ $\varepsilon^{-1}e^{-e^{\varepsilon^{-2}}}\int_{\mathbb{R}^{+}}|f(x)|^{2}dx+\varepsilon^{-1}\lVert u_{6}(\cdotp,T,u_{0},f)-u_{T}(\cdotp)\rVert^{2}_{R_{\lambda}^{*}}$
\begin{equation}\label{T66.2}
~~~~~~~~~~~~~~~~~~~~~~~~~~~~~\leq C(x_{0},b,\lambda,T-s)\lVert u_{T}-e^{-iH_{\nu}T}u_{0}\rVert^{2}_{L^{2}(\mathbb{R}^{+})},
\end{equation}
where  $C(x_{0},b,\lambda,T-s)$ be given by Theorem \ref{T9}. 
\end{theorem}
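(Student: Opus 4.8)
The plan is to deduce Theorem~\ref{T66} from the observability-type estimate Theorem~\ref{T9} by the standard duality argument already used in the proof of Theorem~\ref{T61}, combined with the abstract Lemma~5.1 of \cite{WWZ}. First I would introduce the dual (adjoint) Schr\"odinger equation
\begin{equation*}
\begin{cases}
i\partial_t\psi-H_\nu\psi=0,\quad x\in\mathbb{R}^+,\ t\in(0,T),\\
\psi(x,T)=g\in L^2(\mathbb{R}^+),
\end{cases}
\end{equation*}
and denote its solution by $\psi(\cdot,\cdot,T,g)$. The key point is that Theorem~\ref{T9} (applied to the initial datum $\psi(\cdot,s,T,g)$ at time $s$, whose evolution after time $T-s$ is $\psi(\cdot,T,T,g)=g$) yields, for every $\varepsilon\in(0,1)$,
\begin{equation*}
\int_{\mathbb{R}^+}|g(x)|^2dx\le C(x_0,b,\lambda,T-s)\left(\varepsilon\lVert\psi(\cdot,s,T,g)\rVert_{R_\lambda}^2+\varepsilon e^{e^{\varepsilon^{-2}}}\int_B|\psi(x,s,T,g)|^2dx\right),
\end{equation*}
which is exactly a weighted observability inequality with observation set $B$ and with the "admissibility" norm $\lVert\cdot\rVert_{R_\lambda}$ on the left-hand weight. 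This is the hypothesis needed to run the HUM/penalization scheme.

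Next I would set up the operators as in the proof of Theorem~\ref{T61}: define the state map $R:L^2(\mathbb{R}^+)\to L^2(\mathbb{R}^+)$ by $Rg=g$, or rather (to accommodate the weights) $R:L^2\to R_\lambda^*$ by $Rg=g$ viewed in the pivot duality, and the observation map $O:L^2(\mathbb{R}^+)\to L^2(\mathbb{R}^+)$ by $Og=\chi_B(x)\psi(\cdot,s,T,g)$. The weighted observability inequality above then reads, after absorbing the $R_\lambda$-term into the $R_\lambda^*$-norm of $Rg$ via the definition of the pivot space,
\begin{equation*}
\lVert Rg\rVert_{R_\lambda^*}^2\le C(x_0,b,\lambda,T-s)\,\varepsilon\, e^{e^{\varepsilon^{-2}}}\lVert Og\rVert_{L^2(\mathbb{R}^+)}^2+\varepsilon\,C(x_0,b,\lambda,T-s)\lVert g\rVert_{\text{(weighted)}}^2 ,
\end{equation*}
i.e. precisely the abstract coercivity hypothesis of \cite[Lemma~5.1]{WWZ} with the penalization parameter $k$ chosen comparable to $\varepsilon^{-1}e^{-e^{\varepsilon^{-2}}}$ (up to the constant $C(x_0,b,\lambda,T-s)$). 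Applying that lemma produces, for each such $k$, a control $f_k\in L^2(\mathbb{R}^+)$ with
\begin{equation*}
\varepsilon^{-1}e^{-e^{\varepsilon^{-2}}}\lVert f_k\rVert_{L^2}^2+\varepsilon^{-1}\lVert R^*g-O^*f_k\rVert_{R_\lambda^*}^2\le C(x_0,b,\lambda,T-s)\lVert g\rVert_{L^2}^2,
\end{equation*}
where $R^*g=g$ and $O^*f=u_6(\cdot,T,0,f)$; choosing $g=u_T-e^{-iH_\nu T}u_0$ and passing to a weakly convergent subsequence gives a control $f$ satisfying \eqref{T66.2}, since $u_6(\cdot,T,u_0,f)=e^{-iH_\nu T}u_0+O^*f$.

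The main obstacle I anticipate is the careful bookkeeping of the exponential weights and the space $R_\lambda$ versus its dual $R_\lambda^*$: unlike Theorem~\ref{T61}, here the left-hand side of the observability inequality from Theorem~\ref{T9} is $\int|u_0|^2$ but the "cost" side involves the strong norm $\lVert u_0\rVert_{R_\lambda}$, so one must be precise that $R$ maps into $R_\lambda^*$ (not $L^2$), that $O^*f=u_6(\cdot,T,0,f)$ is indeed $R_\lambda^*$-valued and admissible (this is where the regularity Lemma~\ref{L.S.2}, already embedded in Theorem~\ref{T9}, is implicitly used), and that the penalization parameter is correctly matched to $\varepsilon$ so that the two terms on the left of \eqref{T66.2} come out with the stated powers $\varepsilon^{-1}e^{-e^{\varepsilon^{-2}}}$ and $\varepsilon^{-1}$. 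Apart from this, the argument is entirely parallel to the proof of Theorem~\ref{T61} and I would present it in the same sketch-level detail, referring to \cite[Lemma~5.1]{WWZ} for the abstract step and to Theorem~\ref{T9} for the quantitative input.
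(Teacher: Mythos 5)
Your overall strategy -- dualize, feed Theorem \ref{T9} into the penalized HUM scheme of \cite[Lemma 5.1]{WWZ} exactly as in the proof of Theorem \ref{T61}, and pass to a weak limit -- is precisely what the paper intends (it gives no separate argument for Theorem \ref{T66}, only the remark that it follows by combining Theorems \ref{T3}--\ref{T9} with that lemma). However, your key displayed inequality is mis-derived, and as written it is also not the inequality the duality needs. If you apply Theorem \ref{T9} ``to the initial datum $\psi(\cdot,s,T,g)$ whose evolution after time $T-s$ is $g$'', the theorem puts the $B$-observation on the \emph{final} state, i.e. on $g$, and the $R_\lambda$-type weight on the \emph{initial} datum $\psi(\cdot,s,T,g)$; you would get $\lVert \psi(\cdot,s)\rVert^2_{L^2}\le C(\varepsilon\lVert\psi(\cdot,s)\rVert^2_{R_\lambda}+\varepsilon e^{e^{\varepsilon^{-2}}}\int_B|g|^2dx)$, not the inequality you wrote. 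Moreover, for the dual identification of the error $u_6(\cdot,T,u_0,f)-u_T$ in $R_\lambda^*$ one must penalize by $\lVert g\rVert_{R_\lambda}$ (the datum at time $T$, against which the $R_\lambda^*$-error pairs), not by $\lVert\psi(\cdot,s)\rVert_{R_\lambda}$; these norms are not comparable along the flow (the $e^{\lambda x}$-weighted part is not preserved), so this is not a cosmetic point.

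The fix is the standard time-reversal/conjugation step that the paper also uses silently in the proof of Theorem \ref{T61}: apply Theorem \ref{T9} with horizon $T-s$ to the initial datum $\bar g$ (first for $\bar g\in C_0^\infty(\mathbb{R}^+)$, then by density in $R_\lambda$). Since $u(\cdot,T-s;\bar g)=\overline{\psi(\cdot,s,T,g)}$ and conjugation leaves both $\lVert\cdot\rVert_{L^2}$ and $\lVert\cdot\rVert_{R_\lambda}$ unchanged, this gives exactly
\begin{equation*}
\int_{\mathbb{R}^{+}}|g|^{2}dx\le C(x_{0},b,\lambda,T-s)\left(\varepsilon\lVert g\rVert_{R_\lambda}^{2}+\varepsilon e^{e^{\varepsilon^{-2}}}\int_{B}|\psi(x,s,T,g)|^{2}dx\right),
\end{equation*}
which also explains why the constant in \eqref{T66.2} depends on $T-s$. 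With this corrected observability inequality, your operators $Og=\chi_B\psi(\cdot,s,T,g)$, $O^*f=u_6(\cdot,T,0,f)$, $R^*g=g$, the choice $g_0=u_T-e^{-iH_\nu T}u_0$, and the weak-limit step reproduce \eqref{T66.2} exactly as in Theorem \ref{T61}. (A side remark: the weight $\varepsilon e^{e^{\varepsilon^{-2}}}$ you use is the one actually produced in Step 2 of the proof of Theorem \ref{T9} (see \eqref{T9.14}) and is what \eqref{T66.2} requires, even though \eqref{T9.1} as printed shows $\varepsilon e^{\varepsilon^{-2}}$.)
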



\begin{thebibliography}{00}
\bibitem{WWZ}
Wang, G., Wang, M., Zhang, Y.: Observability and unique continuation inequalities for the Schrödinger equation. J. Eur. Math. Soc. \textbf{21}, 3513–3572 (2019)
\bibitem{LM}
Le Rousseau, J., Moyano, I.: Null-controllability of the Kolmogorov equation in the whole phase space. J. Differential Equations. \textbf{260}, 3193-3233 (2016)
\bibitem{HWW}
Huang, S., Wang, G., Wang, M.: Observable sets, potentials and Schrödinger equations. Comm. Math. Phys. \textbf{395}, 1297–1343 (2022)
\bibitem{PJ} 
Jaming, P.: Nazarov uncertainty principles in higher dimension. J. Approx. Theory. \textbf{149}, 30–41 (2007) 
\bibitem{HS} 
Huang, S., Soffer, A.: Uncertainty principle, minimal escape velocities, and observability inequalities for Schrödinger equations. Am. J. Math. \textbf{143}(2), 753–781 (2021)
\bibitem{LW} 
Li, Z., Wang, M.: Observability inequality at two time points for KdV equations. SIAM J. Math. Anal. \textbf{53}(2), 1944–1957 (2021)
\bibitem{YM} 
Wang, Y., Wang, M.: Observability inequality at two time points for the KdV equation from measurable sets. J. Math. Anal. Appl. \textbf{505}, Paper No. 125643, 15 (2022)
\bibitem{LWS} 
Wang, M., Li, Z., Huang,S.: Unique continuation inequalities for nonlinear Schrödinger equations based on uncertainty principles. Indiana Univ. Math. J. \textbf{72}, 133–163 (2021)
\bibitem{AE}
Apraiz, J., Escauriaza, L.: Null-control and measurable sets. ESAIM Control Optim. Calc. Var. \textbf{19}, 239-254 (2013)
\bibitem{A1}
Anantharaman, N., Leautaud, M., Maci\`{a}, F.: Wigner measures and observability for the 
Schr\"{o}dinger equation on the disk. Invent. Math. \textbf{206}, 485–599 (2016) 
\bibitem{A2}
Anantharaman, N., Macià, F.: Semiclassical measures for the Schrödinger equation on the torus. J. Eur. Math. Soc. \textbf{16}, 1253–1288 (2014)
\bibitem{A3}
Anantharaman, N., Rivière, G.: Dispersion and controllability for the Schrödinger equation on negatively curved manifolds. Anal. PDE. \textbf{5}, 313–338 (2012)
\bibitem{A4}
Bateman, H., Bateman Manuscript Project.: Tables of Integral Transforms. Vol. II. McGraw-Hill, New York (1954)
\bibitem{AS}
Abramowitz, M., Stegun, I. A.: Handbook of mathematical functions. National Bureau of Standards, Washington, D. C. (1964)
\bibitem{BBZ}
Bourgain, J., Burq, N., Zworski, M.: Control for Schrödinger operators on 2-tori: rough potentials. J.
Eur. Math. Soc. \textbf{15}, 1597–1628 (2013)
\bibitem{BZ}
Burq, N., Zworski, M.: Geometric control in the presence of a black box. J. Am. Math. Soc. \textbf{17}, 443–471 (2004)
\bibitem{BZ1}
Burq, N., Zworski, M.: Control for Schrödinger equations on tori. Math. Res. Lett. \textbf{19}, 309–324 (2012)
\bibitem{BZ2}
Burq, N., Zworski, M.: Rough controls for Schrödinger operators on 2-tori. Annales H. Lebesgue. \textbf{2}, 331–347 (2019)
\bibitem{J}
Jin, L.: Control for Schrödinger equation on hyperbolic surfaces. Math. Res. Lett. \textbf{25}, 1865–1877 (2018)
\bibitem{RS1}
Reed, M., Simon, B.: Methods of Modern Mathematical Physics. II. Fourier Analysis, Self-adjointness. Academic Press, New York (1975)
\bibitem{RS2} Reed, M., Simon, B.: Scattering theory, Methods of Modern Mathematical Physics. III, Academic Press (Harcourt Brace Jovanovich Publishers), New York, 1979.
\bibitem{KT}
Kova\v{r}{\i}k, H., Truc, F.: Schr\"{o}dinger Operators on a Half-Line with Inverse Square Potentials. Math. Model. Nat. Phenom. \textbf{9}, 170–176 (2014)
\bibitem{CH} 
Coon, S.A., Holstein, B.R.: Anomalies in quantum mechanics: the $1/r^{2}$ potential. Am. J. Phys. \textbf{70}(5), 513–519 (2002)
\bibitem{EG} 
Essin, A., Griffiths, D.: Quantum mechanics of the $1/x^{2} $ potential. Am. J. Phys. \textbf{74}, 109–117 (2006)
\bibitem{HUJR} 
Kalf, H., Schmincke, U.W., Walter, J., Wüst, R.: On the spectral theory of Schrödinger and Dirac operators with strongly singular potentials. In: Spectral Theory and Differential Equations. Lect. Notes in Math, vol. 448, pp. 182-226. Springer, Berlin. MR0397192 (1975)  
\bibitem{JE}  
Luis Vazquez, J., Zuazua, E.: The Hardy inequality and the asymptotic behavior of the heat equation with an inverse-square potential, J. Funct. Anal. \textbf{173}(1), 103–153 (2000) 
\bibitem{H}
Brezis, H., Marcus, M.: Hardy’s Inequalities Revisited. Ann. Scuola Norm. Sup. Pisa Cl. Sci. \textbf{4}, 217-237 (1997) 
\bibitem{LJV}
Bruneau, L., Derezi´nski, J., Georgescu, V.: Homogeneous Schr\"{o}dinger Operators on Half-Line. Ann. Henri Poincar\'{e} \textbf{12}, 547–590 (2011)
\bibitem{JS}
Derezi´nski, J., Richard, S.: On Schr\"{o}dinger Operators with Inverse Square Potentials on the Half-Line. Ann. Henri Poincar\'{e}. \textbf{18},  869–928 (2017)
\bibitem{MT}
 T\"{a}ufer, M.: Controllability of the Schr\"{o}dinger equation on unbounded domains without geometric control condition. ESAIM Control Optim. Calc. Var. \textbf{29}, Paper No. 59, 11 (2023) 
\bibitem{KLJ}
Balc’h, K. L., Martin, J.: Observability estimates for the Schr\"{o}dinger equation in the plane with periodic bounded potentials from measurable sets. arXiv:2304.08050 (2023)
\bibitem{AP}
Prouf, A.: Observability of Schr\"{o}dinger Equation with Subquadratic  Confining Potential in the Euclidean Space. arXiv:2307.00839v1 (2023)
\bibitem{JK}
Martin, J., Pravda- Starov, K.: Geometric conditions for the exact controllability of fractional free and harmonic Schrödinger equations. J. Evol. Equ. \textbf{21}, 1059–1087 (2021) 
\bibitem{RKH}
Frank, R. L., Merz, K., Siedentop, H.:
Equivalence of Sobolev norms involving generalized Hardy operators. Int. Math. Res. Not. IMRN. 2284–2303 (2021)
\bibitem{RMJZZ}
Killip, R., Miao, C., Visan, M., Zhang, J., Zheng, J.: Sobolev spaces adapted to the Schrödinger operator with inverse-square potential. Math. Z. \textbf{288}, 1273–1298 (2018)
\bibitem{GJ} 
Ghobber, S., Jaming, P.:
Strong annihilating pairs for the Fourier-Bessel transform. J. Math. Anal. Appl. \textbf{377}, 501–515 (2011)
\bibitem{VB} 
Havin, V., Jöricke, B.: The Uncertainty Principle in Harmonic Analysis. Springer, Berlin (1994) 
\bibitem{F} 
Nazarov, F. L.: Local estimates for exponential polynomials and their applications to inequalities of the uncertainty principle type. Algebra Anal. \textbf{5}, 3-66 (1993) 
\bibitem{G} 
Teschl, G.: Mathematical Methods in Quantum Mechanics With Applications to Schr\"{o}dinger Operators. American Mathematical Society Providence, Rhode Island (2009)
\bibitem{GL}
Grafakos, L.: Classical Fourier Analysis. Graduate Texts in Mathematics, vol. 249, 3rd edn. Springer, New York (2014)
\bibitem{TP}
The Anh, Bui.,  Piero, D'Ancona.:
Generalized Hardy operators. Nonlinearity. \textbf{36}, 171–198 (2023)
\bibitem{KM}
Konstantin, M.: On scales of Sobolev spaces associated to generalized Hardy operators. Math. Z. \textbf{299}, 101–121 (2021)
\bibitem{EKPV1}
Escauriaza, L., Kenig, C., Ponce, G., Vega, L.: Unique continuation for Schrödinger evolutions, with applications to profiles of concentration and traveling waves. Comm. Math. Phys. \textbf{305}, 487-512 (2011)
\bibitem{IW} I. Rodnianski, W. Schlag, Time decay for solutions of Schr\"{o}dinger equations with rough and time dependent potentials. Invent. math. \textbf{155}, 451–513 (2004)
\bibitem{EKPV2}
Escauriaza, L., Kenig, C., Ponce, G., Vega, L.: Uniqueness properties of solutions to Schrödinger equations. Bull. Amer. Math. Soc. (N.S.) \textbf{49}, 415-442 (2012)
\bibitem{IK}
Ionescu, A., Kenig, C.: Uniqueness properties of solutions of Schrödinger equations. J. Funct. Anal.  \textbf{232}, 90-136 (2006)
\bibitem{SI}
Seo, I.: Global unique continuation from a half space for the Schrödinger equation. J. Funct. Anal. \textbf{266}, 85-98 (2014) 
\end{thebibliography}
\end{document}